\newtheorem{theorem}{Theorem}[section]
\newtheorem{lemma}[theorem]{Lemma}
\newtheorem{prop}[theorem]{Proposition}
\newtheorem{corollary}[theorem]{Corollary}
\theoremstyle{definition}
\theoremstyle{remark}
\newtheorem{remark}[theorem]{Remark}
\numberwithin{equation}{section}
\newcommand{\IND}{\mathbbm{1}}
\newcommand{\De}{\mathrm{d}}
\newcommand{\rmR}{\ensuremath{\mathrm{R}}}
\newcommand{\rmP}{\ensuremath{\mathrm{P}}}
\newcommand{\cC}{\ensuremath{\mathcal C}}
\newcommand{\cH}{\ensuremath{\mathcal H}}
\newcommand{\cI}{\ensuremath{\mathcal I}}
\newcommand{\cP}{\ensuremath{\mathcal P}}
\newcommand{\cS}{\ensuremath{\mathcal S}}
\newcommand{\cW}{\ensuremath{\mathcal W}}
\newcommand{\bbE}{\ensuremath{\mathbb E}}
\newcommand{\bbR}{\ensuremath{\mathbb R}}
\newcommand{\frm}{\ensuremath{\mathfrak m}}
\newcommand{\frq}{\ensuremath{\mathfrak q}}
\newcommand{\R}{\mathbb{R}}
\newcommand{\RD}{\mathbb{R}^d}
\newcommand{\N}{\mathbb{N}}
\newcommand{\norm}[1]{\left\lVert #1 \right\rVert}
\newcommand{\Pexpect}[1]{\bbE_\rmP\left[#1\right]}
\newcommand{\Rexpect}[1]{\bbE_\rmR\left[#1\right]}
\newcommand{\RXexpect}[1]{\bbE_{\rmR_{0,T}}\left[#1\right]}
\newcommand{\Id}{\operatorname{Id}}
\newcommand{\abs}[1]{\left\lvert #1 \right\rvert}
\newcommand{\oo}{{\infty}}
\newcommand{\mapor}[1]{{\stackrel{#1}{\xrightarrow{\hspace*{1cm}}}}} 
\newcommand{\weakto}{\rightharpoonup} 
\newcommand{\tempo}{\frac{1}{\kappa}\log C_{d,\alpha,\beta,\gamma} +2\delta} 
\newcommand{\tempobis}{\frac{1}{\kappa}\log\frac{C_{d,\alpha,\beta,\gamma}}{\delta^{3}}}
\begin{document}

\title[Kinetic Schr\"odinger problem]{Entropic turnpike estimates for the Kinetic Schr\"odinger Problem}


\author{Alberto Chiarini}
\address{Università degli Studi di Padova}
\curraddr{Department of Mathematics ``Tullio Levi-Civita'', via Trieste 63, Padova}
\email{chiarini@math.unipd.it}
\thanks{}

\author{Giovanni Conforti}
\address{\'Ecole Polytechnique}
\curraddr{D\'epartement de Math\'ematiques Appliqu\'es,
Palaiseau, France.}
\email{giovanni.conforti@polytechnique.edu}
\thanks{GC acknowledges funding from the grant SPOT (ANR-20-CE40-
0014)}

\author{Giacomo Greco}
\address{Eindhoven University of Technology}
\curraddr{Department of Mathematics and Computer Science, 5600 MB Eindhoven}
\email{g.greco@tue.nl}
\thanks{GG acknowledges support from NWO Research Project 613.009.111 ``Analysis meets Stochastics: Scaling limits in complex systems''. The research was also partially funded by Nuffic in the framework of the Van Gogh Programme and with the title ``The kinetic Schrödinger Problem''.}

\author{Zhenjie Ren}
\address{Universit\'e Paris-Dauphine}
\curraddr{Ceremade, PSL Research University, 75016 Paris, France.}
\email{ren@ceremade.dauphine.fr}
\thanks{}

\begin{abstract}
We investigate the \emph{kinetic} Schr\"odinger problem, obtained considering Langevin dynamics instead of Brownian motion in Schr\"odinger's thought experiment. Under a quasilinearity assumption we establish exponential entropic turnpike estimates for the corresponding Schr\"odinger bridges and exponentially fast convergence of the entropic cost to the sum of the marginal entropies in the long-time regime, which provides as a corollary an entropic Talagrand inequality. In order to do so, we benefit from recent advances in the understanding of classical Schr\"odinger bridges and adaptations of Bakry--\'Emery formalism to the kinetic setting. Our quantitative results are complemented by basic structural results such as dual representation of the entropic cost and the existence of Schr\"odinger potentials.
\end{abstract}

\subjclass[2010]{}
\keywords{}
\date{}
\dedicatory{}
\maketitle


\section{Introduction and statement of the main results}

In two seminal contributions \cite{Schr,Schr32} E.\ Schr\"odinger considered the problem of finding the most likely evolution of a cloud of independent Brownian particles conditionally to observations. This problem is nowadays known as \emph{Schr\"odinger problem} and may be viewed \cite{Mik04,leonard2012schrodinger} as a more regular and probabilistic proxy for the Optimal transport (Monge-Kantorovich) problem. This observation has motivated recent interest from both the  engineering and statistical machine learning communities \cite{chen2021stochastic,peyre2019computational}. Moreover, over the past few years, various kinds of Schr\"odinger problems have been introduced and studied in the literature with different aims and scopes such as, for example, the multiplicative Schr\"odinger problem \cite{pal2018multiplicative} and the mean field Schr\"odinger problem \cite{backhoff2020mean}. In this article we investigate the \emph{Kinetic Schr\"odinger Problem}, henceforth \ref{KSP}, with particular emphasis on the long-time and ergodic behaviour of the corresponding Schr\"odinger bridges. A heuristic formulation of \ref{KSP} is naturally given in terms of the celebrated Schr\"odinger's thought experiment. Consider a system of $N\gg1$ independent stationary particles $(X^{1}_t,\ldots,X^{N}_t)_{t\in[0,T]}$ evolving according to the Langevin dynamics
\begin{equation*}
    \begin{cases}
     \De X^i_t = V^i_t\De t,\\
    \De V^i_t = -\nabla U(X^{i}_t)\De t - \gamma V^i_t\De t +\sqrt{2\gamma}\,\De B^i_t, \quad i=1,\ldots, N,
    \end{cases}
\end{equation*}
and assume that two snapshots of the particle system at the initial time $t=0$ and at the terminal time $t=T$ have been taken. The Schr\"odinger problem is that of finding the most likely evolution of the particle system conditionally on this information. 
In order to turn this heuristic description into a sound mathematical problem, we introduce the empirical path measure 
\begin{equation*}
    \bm\mu^N := \frac1N\sum \delta_{(X^i_{\cdot},V^{i}_{\cdot})}
\end{equation*}
that is a random probability measure on the space of continuous trajectories $C([0,T];\R^{2d}):=\Omega$ and consider two probability measures $\mu,\nu$ on $\RD$, representing the observed configuration at initial and final time, that is to say
\begin{equation*}
\frac1N \sum_{i=1}^N\delta_{X^i_0} \approx \mu, \quad \frac1N\sum_{i=1}^N\delta_{X^i_T} \approx \nu .
\end{equation*}
Then, leveraging Sanov's Theorem~\cite[Theorem 6.2.10]{DemboZeitouni}, whose message is that the likelihood of a given evolution $\bm\rho$ is measured through the relative entropy
\begin{equation*}
    \mathrm{Prob}\Big[ \bm\mu^N \approx  \bm\rho \Big] \approx \exp(- N \cH(\bm \rho | \rmR ) ),
\end{equation*}
we finally arrive at the variational problem 
\begin{equation}\label{KSPd}\tag{KSPd}
 \cC_T(\mu,\nu):=\inf\, \biggl\{\cH(\rmP|\rmR): \rmP \in \cP(C([0,T];\R^{2d})), \, (X_{0})_{\#}\rmP=\mu,(X_{T})_{\#}\rmP=\nu\biggr\}.
\end{equation}
In the above, $\rmR$ is the reference probability measure, that is the law of 
 \begin{equation}\label{langevin}
    \begin{cases}
        \De X_t = V_t \De t\\
        \De V_t = -\,\nabla U(X_t)\De t - \gamma V_t\De t +\sqrt{2\gamma}\,\De B_t\,\\
        (X_0,V_0)\sim\frm,
    \end{cases}
\end{equation} 
where the invariant (probability) measure $\frm$ is given by  \begin{equation*} 
\frm(\De x,\De v) =\frac{1}{Z} e^{-U(x)-\frac{\abs{v}^2}{2}}\De x\, \De v,\end{equation*}
with $Z$ being a normalising constant.
Moreover, $(X_t,\,V_t)_{t\in [0,T]}$ denotes the canonical process on $\Omega$, $\#$ is the push-forward and $\cH(\cdot|\rmR)$ is the relative entropy functional defined on $\cP(\Omega)$ as
\[\cH(\rmP|\rmR)\coloneqq\begin{cases}
                    \Pexpect{\log\frac{\De \rmP}{\De \rmR}}&\quad\text{if }\rmP\ll\rmR,\\
                    +\oo&\quad\text{otherwise}.
                    \end{cases}\]
Given that Schr\"odinger's thought experiment is motivated by statistical mechanics and the physical relevance of the Langevin dynamics and its various applications, the study of the kinetic Schr\"odinger problem appears to be quite natural. Nevertheless, to the best of our knowledge, it seems that there has been no dedicated study so far, with the exception of~\cite{chen2015fast}. The objective of this paper is to take some steps forward in this direction, in particular by gaining a quantitative understanding of optimal solutions, called Schr\"odinger bridges.
\subsection*{Turnpike property for Schr\"odinger bridges} The turnpike property is a general principle in optimal control theory stipulating that solutions of dynamic control problems are made of three pieces: first a rapid transition from the initial state to the steady state, the \emph{turnpike}, then a long stationary phase localised around the turnpike, and finally another rapid transition to reach the final state. In order to link this concept to \ref{KSPd}, we need to rephrase it as a stochastic control problem. This task is easily accomplished thanks to classical results on the representation of path measures with finite entropy, see e.g. \cite{FOLL88,LeoGir} and we get that \ref{KSPd} is equivalent to
\begin{equation}\label{eq: KSPd stoch control}
\inf\left\{\cH((X_0,V_0)_{\#}\rmP|\frm)+\frac{1}{4\gamma}\bbE_\rmP\Big[\int_{0}^T|\alpha^{\rmP}_t|^2\De t\Big]: \rmP\in\cP(\Omega), \,\, \rmP\,\text{admissible} \right\},
\end{equation}
where a path probability measure $\rmP$ is admissible if and only if under $\rmP$, there exist a Brownian motion $(B_t)_{t\in[0,T]}$ adapted to the canonical filtration and an adapted process $(\alpha^P_t)_{t\in[0,T]}$ such that $\bbE_\rmP[\int_{0}^T|\alpha^{\rmP}|^2\De t]<+\infty$ and the canonical process satisfies

\begin{equation}\label{eq: KSP admissible P}
\begin{cases}
\De X_t = V_t\De t, \\
\De V_t = - \nabla U(X_t)\De t -\gamma V_t \De t + \alpha^P_{t}\De t + \sqrt{2\gamma}\De B_t,\\
X_0\sim \mu, X_T\sim\nu.
\end{cases}
\end{equation}
For the control problem \eqref{eq: KSPd stoch control}, the turnpike is the invariant measure $\frm$. Indeed, the natural tendency of the particle system is that of reaching configuration $\frm$ and since Schr\"odinger bridges aim at approximating as much as possible the unconditional dynamics while matching the observed configurations, they should also favour configurations close to $\frm$. Obtaining a quantitative rigorous version of this statement is one of the main objectives of this article and, in view of \eqref{eq: KSP admissible P}, it is equivalent to show that Schr\"odinger bridges satisfy the turnpike property.
 In the field of deterministic control, the turnpike phenomenon is rather well understood both in a finite and infinite dimensional setting, see either \cite{trelat2015turnpike,trelat2018steady} and references therein, or the monographs \cite{zaslavski2005turnpike,zaslavski2019turnpike}. The understanding of this phenomenon in stochastic control seems to be much more limited: see \cite{cardaliaguet2012long,cardaliaguet2013long,cardaliaguet2019long} for results on mean field games and \cite{clerc2020long,backhoff2020mean} for results on the classical and mean field Schr\"odinger problems. The reason why the turnpike property for Schr\"odinger bridges in the present context cannot be deduced from existing results lies in the hypocoercive \cite{cedric2009hypocoercivity} nature of the kinetic Fokker-Planck equation 
\begin{equation}\label{KFP}
 \partial_t f_t(x,v)= \gamma\,\Delta_v f_t(x,v)-\gamma \,v\cdot \nabla_v f_t(x,v)+\nabla U\cdot\nabla_v f_t(x,v)-v\cdot\nabla_x f_t(x,v),
\end{equation}
describing the probability density of \eqref{langevin} with respect to $\frm$. It is well known that the problem of quantifying the trend to equilibrium of this PDE is more challenging than for the classical (overdamped) Fokker-Planck equation
and this difficulty is of course reflected in the problem of establishing the turnpike property for the corresponding Schr\"odinger bridges. In this work, we rely on the important progresses made in the study of the long-time behaviour of \eqref{KFP} over the last fifteen years using either an analytical approach see e.g. \cite{Baudoin, dolbeault2015hypocoercivity,herau2004isotropic, cedric2009hypocoercivity} and references therein, or a probabilistic approach, see e.g.~\cite{eberle2019couplings, guillin2021kinetic}, as well as on the new developments around the long-time behaviour of Schr\"odinger bridges, in order to gain some understanding on \emph{controlled} versions of the kinetic Fokker-Planck equation. Leaving a more accurate comparison between our results and the existing literature to the text below, let us first present a very concise summary of our contributions and explain how this article is structured.
\subsection*{Organisation} The document is organised as follows. In the upcoming sections~\ref{sec: basic res},~\ref{sec: cost res} and~\ref{sec: tpike res} we state and comment our main results. In particular, Section~\ref{sec: basic res} contains additional background material on the Schr\"odinger problem and structural results such as existence, uniqueness, duality and existence of Schr\"odinger-Kantorovich potentials for~\ref{KSPd}. Section \ref{sec: cost res} is devoted to the study of the long-time behaviour of the entropic cost, whereas in Section~\ref{sec: tpike res} we state exponential turnpike estimates for the Fisher information and relative entropy along Schr\"odinger bridges. Section \ref{sec:2} contains preliminary results on the Langevin dynamics and the associated semigroup that are needed for the proof of the main results, that we carry out in Section \ref{sec: proof main res} working at first under an extra regularity assumption on the marginal measures $\mu$ and $\nu$ that we eventually remove thanks to the technical results of Section \ref{sec: approx}.

\subsection{The kinetic Schr\"odinger problem}\label{sec: basic res}

This article is devoted to the analysis of a stochastic mass transportation problem, that we name \emph{kinetic Schr\"odinger problem}, owing to the fact that it is obtained from the classical Schr\"odinger problem by replacing Brownian particles with a system of independent particles following the Langevin dynamics in Schr\"odinger's thought experiment. The first formulation \ref{KSPd}, that we proposed on the basis of Sanov's Theorem, is in terms of an entropy minimisation problem over path probability measures. Besides the change of the reference measure, another difference with respect to classical instances of the Schr\"odinger problem lies in the fact that it is not the full marginal that is constrained at initial and final time, but only its spatial component. Even though \ref{KSPd} seems to be a more faithful representation of Schr\"odinger's thought experiment, also the problem with fully constrained marginals \begin{equation}\label{KFSPd}\tag{KFSPd}
\cC^F_T(\bar\mu,\bar\nu):= \inf\, \biggl\{\cH(\rmP|\rmR): \rmP \in \cP(\Omega), \, (X_{0},V_{0})_{\#}\rmP=\bar\mu,(X_{T},V_{T})_{\#}\rmP=\bar\nu\biggr\}\,.
\end{equation}
where $\bar\mu,\bar\nu\in\cP(\R^{2d})$ is worth studying and we shall work on both problems in the sequel. Through a classical argument \cite{FOLL88} it is possible to reduce the dynamic formulations (cf.~\ref{KSPd} and~\ref{KFSPd}) to static ones. For example, \ref{KSPd} is equivalent to solving
\begin{equation}\label{KSP}\tag{KSP}
 \inf\,\biggl\{\cH(\pi|\rmR_{0,T}): \pi\in\Pi_X(\mu,\,\nu) \biggr\}\,,
\end{equation}
where $\rmR_{0,T}\coloneqq \left((X_0,V_0),\,(X_T,V_T)\right)_{\#}\rmR$ is the joint law of $\rmR$ at initial and terminal time and the set $\Pi_X(\mu,\nu)$ is defined as
\[
\Pi_X\left( \mu,\,\nu\right)\coloneqq\left\{\pi\in\cP\bigl(\R^{2d}\times\R^{2d}\bigr )\mid (\mathrm{proj}_{x_1})_{\#}\pi=\mu,\,(\mathrm{proj}_{x_2})_{\#}\pi=\nu\right\},
\]
with $\mathrm{proj}_{x_i}\bigl((x_1,\,v_1),(x_2,\,v_2)\bigr)\coloneqq x_i$ for any $i=1,2$. In a similar fashion, the static formulation of \ref{KFSPd} is
\begin{equation}\label{KFSP}\tag{KFSP}
 \inf\,\biggl\{\cH(\pi|\rmR_{0,T}): \pi\in\Pi(\bar\mu,\,\bar\nu) \biggr\}\,,
\end{equation}
where $\Pi(\bar\mu,\bar\nu)$ is the (usual) set of couplings of $\bar\mu$ and $\bar\nu$. The equivalence between the static and dynamic formulations is obtained mixing optimal static solutions with the bridges of the reference measure, see \cite{LeoSch} for details. Finally, one can also derive a fluid dynamic (Benamou-Brenier \cite{benamou2000computational}) formulation as well a stochastic control formulation of both problems. In particular, the latter one, that we sketched at \eqref{eq: KSPd stoch control} is the one that motivated us to investigate the turnpike phenomenon. We now proceed to establish some basic though fundamental structural results on the kinetic Schr\"odinger problems at hand. But before doing so, let us present the assumptions under which our main results hold.

\subsubsection{Assumptions}\label{sec: assumptions}
We state here the assumptions on the potential $U$ and on the constraints $\mu,\,\nu,\,\bar\mu$ and $\bar\nu$ that we use in the sequel. We define $\frm_X,\,\frm_V \in \cP(\R^d)$ to be the respectively the space and velocity marginals of $\frm$, in particular $\frm = \frm_X\otimes \frm_V$. 

\begin{enumerate}[label=(H\arabic*),ref=(H\arabic*)]\setlength{\parskip}{5pt}
 \item\label{H1} $U$ is a $C^\oo$ strongly convex potential with bounded derivatives of order $k\geq 2$.

 \item\label{H2} There exist $0<\alpha<\beta$ such that
 \[\sqrt{\beta}-\sqrt{\alpha}\leq \gamma\,,\qquad \text{and}\qquad \alpha\Id_d\leq \nabla^2 U(x)\leq \beta\Id_d\,,\qquad\text{for all }x\in\RD\,,\]
 where $\gamma>0$ is the friction parameter in \eqref{langevin}.
 
 \item\label{H3} The probability measures $\mu$ and $\nu$ on $\R^d$ satisfy \[\cH(\mu|\frm_X)<+\oo\quad\text{and}\quad\cH(\nu|\frm_X)<+\oo\,.\]
 \item\label{H4} $\mu, \nu \,\ll\frm_X$,  $\frac{\De\mu}{\De\frm_X},\,\frac{\De\nu}{\De\frm_X}\in L^\oo(\frm_X)$ and are compactly supported on $\R^{d}$.
 \end{enumerate}
\begin{enumerate}[label=(FH\arabic*),ref=(FH\arabic*)]\setlength{\parskip}{5pt}
\setcounter{enumi}{2} 
 \item\label{FH3}  The probability measures $\bar\mu$ and $\bar\nu$ on $\R^{2d}$ satisfy \[\cH(\bar\mu|\frm)<+\oo\quad\text{and}\quad\cH(\bar\nu|\frm)<+\oo\,.\]
 
 \item\label{FH4} $\bar\mu,\bar\nu \,\ll\frm$, $\frac{\De\bar\mu}{\De\frm},\,\frac{\De\bar\nu}{\De\frm}\in L^\oo(\frm)$ and are compactly supported on $\R^{2d}$.
 \end{enumerate}
 
 Assumption~\ref{H2} implies \emph{local} gradient contraction bounds for the semigroup generated by the Langevin dynamics with a certain rate $\kappa>0$ (see Proposition~\ref{prop:contraction} or \cite{Baudoin}). The exponential rate $\kappa$ of Theorems  \ref{teo entropic turnpike} and \ref{full:teo entropic turnpike} below is precisely the one, computed e.g. in \cite{monmarche2020almost,bolley2010trend}, at which the synchronous coupling is contractive for the (uncontrolled) Langevin dynamics. 

 For each of the main results, we will make it explicit which assumptions from the above list are needed.

\subsubsection{Duality} We begin with a duality result, analogous to the Monge-Kantorovich duality of optimal transport and the more recent dual representations of the entropic cost for the classical Schr\"odinger problem \cite{gigli2018benamou}. 
It is worth noticing that, since the stationary Langevin dynamics is not a reversible measure, $\cC^F_{T}(\cdot,\cdot)$ is not symmetric in its arguments. Nevertheless, due to the ``physical reversibility" of the dynamics \cite{chen2015fast}, that is, reversibility up to a sign flip in the velocities, it is not hard to show that $\cC_T(\cdot,\cdot)$ is symmetric in its arguments.
\begin{prop}\label{lemma:duality}
Grant \ref{H1} and \ref{H3}. Then $\cC_T(\mu,\nu)<\infty$ and 
\begin{equation}\label{dual}
 \cC_T(\mu,\nu)=\sup_{\varphi,\psi\in C_b(\R^d)}\biggl\{\int_{\RD} \varphi\,\De\mu+\int_{\RD}\psi\,\De\nu-\log\int_{\R^{4d}}e^{\varphi\oplus\psi}\,\De \rmR_{0,T}\biggr\}\,.
\end{equation}
Similarly, grant~\ref{H1} and \ref{FH3} it holds   $\cC^F_T(\bar\mu,\bar\nu)<\infty$ and 
\begin{equation}\label{full:dual}
 \cC^F_T\left(\bar\mu,\bar\nu\right)=\sup_{\varphi,\psi\in C_b(\R^{2d})}\biggl\{\int_{\R^{2d}} \varphi\,\De\bar\mu+\int_{\R^{2d}}\psi\,\De\bar\nu-\log\int_{\R^{4d}}e^{\varphi\oplus\psi}\,\De \rmR_{0,T}\biggr\}\,.
\end{equation}
\end{prop}

\subsubsection{The $fg$-decomposition} Optimal couplings in the Schr\"odinger problem are characterised by the fact that their density against the reference measure takes a product form, often called $fg$-decomposition \cite{LeoSch}. In \ref{KSPd} $f$ and $g$ have the additional property of depending only on the first and second space variables respectively.

\begin{prop}\label{fg lemma}
Grant \ref{H1},~\ref{H3}. Then, for all $T>0$, \ref{KSP} and \ref{KSPd} admit unique solutions $\mu^T,\rmP^T$ with $\mu^T= ((X_0,V_0),(X_T,V_T))_{\#}\rmP^T$ and there exist two non-negative measurable functions $f^T,\,g^T$ on $\RD$ such that \begin{equation}\label{fg}\rho^T(x,v,y,w)\coloneqq\frac{\De \mu^T}{\De \rmR_{0,T}}(x,v,y,w)=f^T(x) g^T(y),\qquad \rmR_{0,T}\text{-a.s.}\end{equation}
Moreover, $f^T,g^T$ solve the Schr\"odinger system:
\begin{equation}\label{SSp}
 \begin{cases}
\frac{\De\mu}{\De \frm_X}(x) =f^T(x) \,\bbE_\rmR\big[g^T(X_T)|X_0=x\big],\\
\frac{\De\nu}{\De \frm_X}(y) =g^T(y)\,\bbE_\rmR\big[f^T(X_0)|X_T=y\big].
\end{cases}
\end{equation}
\end{prop}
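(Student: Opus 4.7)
The plan is to first establish existence and uniqueness of the static minimiser $\mu^T$, then to exploit the fact that the marginal constraints in~\ref{KSP} only involve the position variables to reduce to a classical Schr\"odinger problem on $\R^d\times\R^d$, and finally to invoke the classical theory of Schr\"odinger potentials.

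For existence, since $\cC_T(\mu,\nu)<\infty$ by Proposition~\ref{lemma:duality}, any minimising sequence $(\pi_n)\subset \Pi_X(\mu,\nu)$ has uniformly bounded relative entropy and is therefore tight. Weak accumulation points lie in $\Pi_X(\mu,\nu)$ because the projections on the position components are continuous, and the weak lower semicontinuity of $\cH(\cdot|\rmR_{0,T})$ yields a minimiser $\mu^T$. Strict convexity of the relative entropy on its finiteness domain gives uniqueness, and the dynamic optimiser $\rmP^T$ is then recovered by gluing $\mu^T$ with the $\rmR$-bridges between endpoints, as in~\cite{LeoSch}.

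The heart of the argument, and the main obstacle compared to the classical setting, is to show that $\rho^T=\De\mu^T/\De\rmR_{0,T}$ depends only on $(x,y)$. I would let $\rmR_{X_0,X_T}$ denote the $(X_0,X_T)$-marginal of $\rmR$ and disintegrate
\begin{equation*}
\rmR_{0,T}(\De x,\De v,\De y,\De w)=\rmR_{X_0,X_T}(\De x,\De y)\,\kappa(\De v,\De w\mid x,y),
\end{equation*}
where $\kappa(\cdot\mid x,y)$ is the conditional law of $(V_0,V_T)$ given $(X_0,X_T)=(x,y)$ under $\rmR$. For any $\pi\in\Pi_X(\mu,\nu)$ with density $\rho$ against $\rmR_{0,T}$, set $\bar\rho(x,y)\coloneqq\int \rho(x,v,y,w)\,\kappa(\De v,\De w\mid x,y)$ and $\tilde\pi\coloneqq\bar\rho\cdot\rmR_{0,T}$. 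Then $\tilde\pi$ has the same position marginals as $\pi$, so $\tilde\pi\in\Pi_X(\mu,\nu)$, and Jensen's inequality applied fiberwise to $t\mapsto t\log t$ yields $\cH(\pi|\rmR_{0,T})\geq \cH(\tilde\pi|\rmR_{0,T})$ with equality iff $\rho$ is $\rmR_{0,T}$-a.s.\ a function of $(x,y)$ alone. Uniqueness of $\mu^T$ therefore forces $\rho^T(x,v,y,w)=\bar\rho^T(x,y)$ $\rmR_{0,T}$-a.s.

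Finally, the problem reduces to the classical Schr\"odinger problem $\inf\{\cH(\bar\pi|\rmR_{X_0,X_T}):\bar\pi\in\Pi(\mu,\nu)\}$. Under~\ref{H1} the Langevin transition kernel is hypoelliptic, so $\rmR_{X_0,X_T}$ admits a smooth strictly positive density, and by stationarity of $\rmR$ both its marginals equal $\frm_X$. Combined with~\ref{H3}, this places us within the scope of classical results on the existence of Schr\"odinger potentials (see e.g.~\cite{LeoSch} and the references therein), which yield non-negative measurable functions $f^T,g^T:\R^d\to[0,\infty)$ such that $\bar\rho^T(x,y)=f^T(x)g^T(y)$, proving~\eqref{fg}. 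Disintegrating $\rmR_{X_0,X_T}$ along its first coordinate and matching the position marginals of $\mu^T$ with $\mu$ and $\nu$ then gives both lines of~\eqref{SSp}.
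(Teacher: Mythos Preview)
Your argument is correct and reaches the same conclusion as the paper, but the order of operations is reversed. The paper works \emph{bottom-up}: it first poses the reduced problem
\[
\min_{q\in\Pi(\mu,\nu)}\cH\bigl(q\,\big|\,\rmR_{0,T}^X\bigr),\qquad \rmR_{0,T}^X\coloneqq(X_0,X_T)_{\#}\rmR,
\]
establishes finiteness directly from the heat-kernel lower bound (Lemma~\ref{lemmaappendicebound}) and~\ref{H3}, obtains the potentials $f^T,g^T$ for the reduced optimiser $\pi$ via~\cite{ruschendorf1993note}, and only then \emph{constructs} $\mu^T$ and $\rmP^T$ by mixing $\pi$ with the $\rmR$-bridges conditioned on $(X_0,X_T)$. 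Your route is \emph{top-down}: you first produce $\mu^T$ by a compactness argument in $\Pi_X(\mu,\nu)$, then use a fibrewise Jensen inequality along the disintegration $\rmR_{0,T}=\rmR_{X_0,X_T}\otimes\kappa(\cdot\mid x,y)$ to force $\rho^T$ to be a function of $(x,y)$ only, and finally read off the potentials from the reduced problem. Both are valid; the paper's ordering is slightly more economical because it never needs to argue tightness in $\Pi_X(\mu,\nu)$, while your Jensen step makes the role of the partial marginal constraint very transparent.

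One logical point to tidy up: you invoke Proposition~\ref{lemma:duality} for the finiteness $\cC_T(\mu,\nu)<\infty$, but in the paper that proposition is proved \emph{after} the present one and its proof begins with ``We have already seen in the previous proof that $\cC_T(\mu,\nu)$ is finite.'' To avoid circularity you should establish finiteness directly---e.g.\ by the same route the paper uses, namely the lower bound of Lemma~\ref{lemmaappendicebound} together with~\ref{H3}, which gives $\cH(\mu\otimes\nu\mid\rmR_{0,T}^X)<\infty$ and hence a feasible coupling in $\Pi_X(\mu,\nu)$ with finite entropy.
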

For \ref{KFSP} and \ref{KFSPd}, the uniqueness of solutions (hereafter $\bar\mu^T$ and $\bar\rmP^T$ respectively) and the $fg$-decomposition are a direct consequence of known results, see e.g. \cite{ruschendorf1993note}, whereas the case \ref{KSP} requires some more work. We remark here that for both dual representation of the cost and the $fg$-decomposition the strict convexity of $U$ and its smoothness are not really necessary, a bounded Hessian would suffice.

\subsection{Long-time behaviour of the entropic costs}\label{sec: cost res} Let us now turn the attention to the ergodic properties of \ref{KSP} and \ref{KFSP} by investigating the long-time behaviour of the entropic cost. To explain the upcoming results, we remark that (H1) implies ergodicity of the Langevin dynamics \cite[Theorem 11.14]{DaPrato} and in particular one has the weak convergence
\begin{equation*}
  R_{0,T} \weakto \frm \otimes \frm .
\end{equation*}
Intuitively, this implies that the variational problem \ref{KSP} converges, in a sense to be made precise, to the   problem
\begin{equation}\label{starimportance}
\min_{\pi\in\Pi_X(\mu,\nu)}\cH(\pi\mid\mathfrak{m}\otimes\mathfrak{m}),
\end{equation}
whose optimal solution  and optimal value are easily seen to be $(\mu\otimes\frm_V)\otimes(\nu\otimes\frm_V)$ and $\cH(\mu|\frm_X)+\cH(\nu|\frm_X)$ respectively. From the point of view of the particle system, this means that in the long-time limit, initial and final states of the system become essentially independent of one another. Moreover, the initial and final velocities are well approximated by independent Gaussians, and are independent from the spatial variables.
The result below turns this intuition into a solid argument, including a quantitative version of the convergence of the entropic cost towards the sum of the marginal entropies. For the classical Schr\"odinger problem, an analogous statement can be found in~\cite{conforti2021formula}.

\begin{theorem}\label{long-time cost}
 Grant~\ref{H1} and~\ref{H3}. Then
\begin{equation}\label{longcost}\lim_{T\to\oo}\cC_T(\mu,\,\nu)=\cH(\mu \mid\frm_X)+\cH( \nu \mid\frm_X)<\oo\,.\end{equation}
Moreover as $T\to\oo$
\begin{equation}\label{weakconv}\mu^T\weakto \left(\mu\otimes\mathfrak{m}_V\right)\otimes\left(\nu\otimes\mathfrak{m}_V\right)\in\Pi_X(\mu,\nu)\,,\end{equation}
weakly and, granted \ref{H2}, there exists a positive constant $C_{d,\alpha,\beta,\gamma}$ (depending only on $d,\alpha,\beta$ and $\gamma$) such that for any $0<\delta\leq1$, as soon as $T>(\tempo)\vee\tempobis$, it holds
	\begin{equation}\label{good:exp conv}
		\abs{\cC_{T}\left(\mu,\nu\right)- \cH\left(\mu|\frm_X\right) -  \cH\left(\nu|\frm_X\right)}\leq C_{d,\alpha,\beta,\gamma}\,\delta^{-3}\,e^{-\kappa\,T}\,\biggl[ \cH\left(\mu|\frm_X\right)+\cH\left(\nu|\frm_X\right)\biggr]\,,\end{equation}and as a consequence the following entropic Talagrand inequality holds
	\begin{equation}\label{gio:talagrand}
	    \cC_{T}\left(\mu,\nu\right) \leq \Big(1+ C_{d,\alpha,\beta,\gamma}\,\delta^{-3}\,e^{-\kappa\,T}\Big)\,\biggl[ \cH\left(\mu|\frm_X\right)+\cH\left(\nu|\frm_X\right)\biggr]\, .
\end{equation}	
\end{theorem}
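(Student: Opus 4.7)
The heart of the argument is a single identity. Since $\frm$ is invariant for the Langevin dynamics, $\rmR_{0,T} \ll \frm \otimes \frm$ with density $p_T$ equal to the transition density of \eqref{KFP} against $\frm$, and the chain rule for relative entropy yields, for every $\pi \in \Pi_X(\mu,\nu)$ of finite entropy against $\rmR_{0,T}$,
\[
\cH(\pi \mid \rmR_{0,T}) \;=\; \cH(\pi \mid \frm \otimes \frm) \;-\; \int \log p_T \, d\pi .
\]
Using as competitor the product plan $\pi_\star \ldef (\mu \otimes \frm_V) \otimes (\nu \otimes \frm_V) \in \Pi_X(\mu,\nu)$, whose entropy against $\frm \otimes \frm$ factorises exactly as $\cH(\mu\mid\frm_X) + \cH(\nu\mid\frm_X)$, produces the upper bound
\[
\cC_T(\mu,\nu) \;\le\; \cH(\mu \mid \frm_X) + \cH(\nu \mid \frm_X) \;-\; \int \log p_T \, d\pi_\star .
\]
For the lower bound, disintegrating a generic $\pi \in \Pi_X(\mu,\nu)$ along positions as $\pi = \pi_X \otimes K$ with $\pi_X \in \Pi(\mu,\nu)$, the chain rule combined with $\cH(\pi_X \mid \mu \otimes \nu) \ge 0$ gives $\cH(\pi \mid \frm \otimes \frm) \ge \cH(\pi_X \mid \frm_X \otimes \frm_X) \ge \cH(\mu \mid \frm_X) + \cH(\nu \mid \frm_X)$, and evaluating the key identity at the optimiser $\mu^T$ yields
\[
\cC_T(\mu,\nu) \;\ge\; \cH(\mu \mid \frm_X) + \cH(\nu \mid \frm_X) \;-\; \int \log p_T \, d\mu^T .
\]
Both \eqref{longcost} and \eqref{good:exp conv} therefore reduce to controlling $\int \log p_T \, d\pi$ for $\pi \in \{\pi_\star, \mu^T\}$; the qualitative limit \eqref{longcost} then follows from the ergodicity of Langevin provided by \ref{H1}.

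For the quantitative estimate \eqref{good:exp conv} I would invoke the exponential contraction of the kinetic semigroup under \ref{H2} supplied by Proposition~\ref{prop:contraction}. The plan is to split $[0,T] = [0,\delta] \cup [\delta, T-\delta] \cup [T-\delta, T]$: on the two short segments of length $\delta$ the semigroup smooths the endpoint densities at a quantitative cost of order $\delta^{-3}$ in the relevant regularity constants (which is exactly the source of the $\delta^{-3}$ prefactor and of the lower threshold on $T$ appearing in the statement), while on the long middle segment the synchronous-coupling contraction of rate $\kappa$ from \ref{H2} delivers exponential mixing $e^{-\kappa(T-2\delta)}$. The $fg$-decomposition of Proposition~\ref{fg lemma} enters by rewriting $\mu^T$ on the smoothed middle interval in a product form on which the contraction acts directly on the individual potentials $f^T$, $g^T$. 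Put together, these ingredients should deliver the uniform bound
\[
\Bigl| \int \log p_T\, d\pi \Bigr| \;\le\; C_{d,\alpha,\beta,\gamma}\, \delta^{-3}\, e^{-\kappa T}\, \bigl[\cH(\mu \mid \frm_X) + \cH(\nu \mid \frm_X)\bigr]
\]
for both admissible choices of $\pi$, whence \eqref{good:exp conv}; inequality \eqref{gio:talagrand} is then immediate from nonnegativity of the marginal entropies. The main technical difficulty I anticipate is precisely the short-time regularisation estimate in the \emph{hypocoercive} setting, where ellipticity is degenerate in the position variables, and it is here that the kinetic adaptation of the Bakry--\'Emery formalism alluded to in the introduction should be decisive.

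Finally, the weak convergence \eqref{weakconv} is a soft consequence of the cost convergence. The uniform bound on $\cH(\mu^T \mid \rmR_{0,T})$ implied by \eqref{longcost} together with the sub-Gaussian tails of $\frm$ under \ref{H1} gives tightness of $\{\mu^T\}_{T \ge 1}$. Joint lower semicontinuity of relative entropy combined with $\rmR_{0,T} \weakto \frm \otimes \frm$ forces any cluster point $\mu^\infty$ to satisfy $\cH(\mu^\infty \mid \frm \otimes \frm) \le \cH(\mu \mid \frm_X) + \cH(\nu \mid \frm_X)$ with spatial marginals still $\mu$ and $\nu$, and uniqueness of the minimiser in \eqref{starimportance} identifies $\mu^\infty = \pi_\star$. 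The boundedness hypothesis \ref{H4} used to justify the regularisation step in the quantitative argument is eventually removed by the approximation scheme of Section~\ref{sec: approx}.
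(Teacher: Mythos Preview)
Your qualitative argument and the weak-convergence paragraph are essentially the paper's $\Gamma$-convergence proof (Proposition~\ref{propgammaconv} and Lemma~\ref{gammalemmacoercive}) in different clothing. One ordering issue: you reduce the \emph{lower} bound on $\cC_T$ to showing $\int \log p_T\,\De\mu^T \to 0$, but since $\mu^T$ varies with $T$ this does not follow from pointwise convergence of $p_T$ alone. The joint lower semicontinuity you correctly invoke later for~\eqref{weakconv} is in fact what closes the liminf, and the paper uses it directly for~\eqref{longcost} rather than passing through $\int \log p_T\,\De\mu^T$.

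The substantive gap is in the quantitative part. Your proposed route---bound $\bigl|\int \log p_T\,\De\pi\bigr|$ by Chapman--Kolmogorov splitting plus semigroup contraction---is not how the paper proceeds, and it is not clear it can be made to work. You would need $\pi$-integrated control of $\log p_T$ with the \emph{right} dependence on the marginals, whereas hypocoercive heat-kernel bounds carry Gaussian weights in the space variables that do not obviously collapse to $\cH(\mu|\frm_X)+\cH(\nu|\frm_X)$. Worse, for $\pi=\mu^T$ your decomposition also leaves the nonnegative discrepancy $\cH(\mu^T|\frm\otimes\frm)-\cH(\mu|\frm_X)-\cH(\nu|\frm_X)$ unbounded. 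The paper instead uses a \emph{dynamical} representation of the cost (Lemma~\ref{lemmarapprcosto}),
\[
\cC_T(\mu,\nu)=\cH(\mu^T_0|\frm)+\cH(\mu^T_T|\frm)-\cH(\mu^T_{T/2}|\frm)
+\int_0^{T/2}\!\!\int\Gamma(\log g^T_s)\rho^T_s\,\De\frm\,\De s+\int_{T/2}^T\!\!\int\Gamma(\log f^T_s)\rho^T_s\,\De\frm\,\De s,
\]
and bounds each piece separately: the endpoint entropies via Theorem~\ref{exp marginal entropies}, the midpoint entropy via the turnpike Theorem~\ref{teo entropic turnpike}, and the two integrals via the corrector estimates of Corollary~\ref{prop:T/2}; the replacement of $\cC_T$ on the right by the marginal entropies is Lemma~\ref{Talagrand4}. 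The $\delta^{-3}$ arises from the short-time regularisation of Proposition~\ref{prop: regularizing effect}. None of these ingredients appear in your sketch; in particular the corrector functionals $\varphi^T,\psi^T$ built from the twisted norms $|\cdot|_{M^{-1}},|\cdot|_{N^{-1}}$ are the mechanism through which the contraction of Proposition~\ref{prop:contraction} actually acts, not a direct estimate on $p_T$.
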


\begin{remark}
Equation \eqref{weakconv} implies in particular that $\mu^T_0\weakto \mu\otimes\frm_V$ and $\mu^T_T\weakto\nu\otimes\frm_V$. This convergence is also exponential, as we show in Theorem \ref{exp marginal entropies}.
\end{remark}
\begin{theorem}\label{full:teolongcost}
  Under the~\ref{H1} and~\ref{FH3} it holds
 \begin{equation}\label{full:longcost}
  \lim_{T\to\oo}\cC^F_T\left(\bar\mu,\bar\nu\right)= \cH\left(\bar\mu|\frm\right)+\cH\left(\bar\nu|\frm\right)<\oo\,.
 \end{equation}
 Moreover as $T\to\oo$
\begin{equation}\label{full:weakconv}\bar{\mu}^{T}\weakto \bar\mu\otimes\bar\nu\in\Pi\left(\bar\mu,\bar\nu\right)\,,\end{equation}
weakly and, granted \ref{H2}, there exists a positive constant $C_{d,\alpha,\beta,\gamma}$ such that for any $0<\delta\leq1$, as soon as $T>(\tempo)\vee\tempobis$, it holds
	\begin{equation}\label{good:full:exp conv}
	    \abs{\cC^F_{T}\left(\bar\mu,\bar\nu\right)- \cH\left(\bar\mu|\frm\right) -  \cH\left(\bar\nu|\frm\right)} \leq C_{d,\alpha,\beta,\gamma}\,\delta^{-3}\,e^{-\kappa\,T}\,\biggl[ \cH\left(\bar\mu|\frm\right)+\cH\left(\bar\nu|\frm\right)\biggr]\, ,
\end{equation}	and as a consequence the following entropic Talagrand inequality holds
	\begin{equation}\label{gio:full:talagrand}
	    \cC^F_{T}\left(\bar\mu,\bar\nu\right) \leq \Big(1+ C_{d,\alpha,\beta,\gamma}\,\delta^{-3}\,e^{-\kappa\,T}\Big)\,\biggl[ \cH\left(\bar\mu|\frm\right)+\cH\left(\bar\nu|\frm\right)\biggr]\, .
\end{equation}
\end{theorem}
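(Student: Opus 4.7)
My plan is to mirror the proof of Theorem~\ref{long-time cost}, adapting it to the setting where the full phase-space marginals are prescribed. The backbone is a two-sided comparison: an upper bound obtained by plugging the product coupling $\bar\mu\otimes\bar\nu\in\Pi(\bar\mu,\bar\nu)$ into \ref{KFSP}, and a lower bound obtained from the duality of Proposition~\ref{lemma:full:duality} with a judicious choice of dual potentials. Concretely, writing $\bar\rho\coloneqq\frac{\De\bar\mu}{\De\frm}$, $\bar\eta\coloneqq\frac{\De\bar\nu}{\De\frm}$ and $r_T\coloneqq\frac{\De\rmR_{0,T}}{\De(\frm\otimes\frm)}$ (which exists since Langevin is ergodic under~\ref{H1}), the upper bound expands as
\begin{equation*}
\cC^F_T(\bar\mu,\bar\nu)\leq \cH(\bar\mu\otimes\bar\nu\mid\rmR_{0,T})= \cH(\bar\mu\mid\frm)+\cH(\bar\nu\mid\frm)-\int \bar\rho(x,v)\bar\eta(y,w)\log r_T\,\De(\frm\otimes\frm).
\end{equation*}
For the lower bound, I take the explicit test functions $\varphi=\log\bar\rho$ and $\psi=\log\bar\eta$ in \eqref{full:dual} (legitimate under~\ref{FH4} after truncation; the extension to~\ref{FH3} is handled separately, see below), yielding
\begin{equation*}
\cC^F_T(\bar\mu,\bar\nu)\geq \cH(\bar\mu\mid\frm)+\cH(\bar\nu\mid\frm)-\log\Bigl(1+\int\bar\rho\,\bar\eta\,(r_T-1)\,\De(\frm\otimes\frm)\Bigr).
\end{equation*}
Both sides then reduce to controlling the same ``ergodicity defect'' $r_T-1$, and the qualitative statement \eqref{full:longcost} follows from any form of ergodicity of the Langevin semigroup ensuring that this defect vanishes, together with dominated convergence once $\bar\rho,\bar\eta\in L^\infty(\frm)$ under~\ref{FH4}.

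To obtain the quantitative bound \eqref{good:full:exp conv} under~\ref{H2}, I would exploit the contraction estimates for the Langevin semigroup recorded in Section~\ref{sec:2} (cf.\ Proposition~\ref{prop:contraction}). The idea is that the two-time kernel of the Langevin dynamics started from $\frm$ can be written, after running the process for a short warm-up time $\delta$, as a perturbation of $\frm\otimes\frm$ whose deviation contracts at rate $\kappa$ in a suitable norm. Splitting the time interval as $[0,\delta]\cup[\delta,T-\delta]\cup[T-\delta,T]$ and inserting the semigroup regularisation on the two endpoint intervals of length $\delta$ converts the hypocoercive estimate (which is typically singular at $t=0$) into an effective contraction, with the singularity absorbed into the factor $\delta^{-3}$. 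This yields $|r_T-1|$ bounded pointwise (on supports) by $\delta^{-3}e^{-\kappa T}$ times an explicit constant, and plugging this into the two-sided bounds above gives \eqref{good:full:exp conv}. The Talagrand inequality \eqref{gio:full:talagrand} is then immediate by rearranging \eqref{good:full:exp conv} and using that the right-hand side there dominates the signed error. The weak convergence \eqref{full:weakconv} follows from the upper bound on $\cC^F_T$ and lower semicontinuity of relative entropy: any weak limit point of the minimisers $\bar\mu^T$ lies in $\Pi(\bar\mu,\bar\nu)$, has finite entropy against $\frm\otimes\frm$, and matches the unique entropy-minimising coupling $\bar\mu\otimes\bar\nu$ for the limit problem \eqref{starimportance} with $\frm\otimes\frm$ as reference.

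The main obstacle is the quantitative step: justifying the exponentially small estimate for $r_T-1$ in the right functional sense. Unlike the elliptic case treated in~\cite{conforti2021formula}, the kinetic Fokker--Planck equation~\eqref{KFP} is hypocoercive rather than coercive, and gradient contraction bounds degenerate at $t=0$; this is precisely what forces the warm-up device and the $\delta^{-3}$ blow-up. The choice of the twisted distance/norm underlying the contraction in Proposition~\ref{prop:contraction}, together with the quasilinearity condition $\sqrt\beta-\sqrt\alpha\leq\gamma$ in~\ref{H2}, is what makes the argument go through with the rate $\kappa$ announced in the theorem. A secondary technical point is the removal of~\ref{FH4}: one approximates $\bar\mu,\bar\nu$ by bounded, compactly supported densities satisfying~\ref{FH4}, proves the estimate for the approximants, and passes to the limit using $\Gamma$-convergence/stability of \ref{KFSP} developed in Section~\ref{sec: approx}, together with the lower semicontinuity of $\cH(\cdot\mid\frm)$.
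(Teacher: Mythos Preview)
Your route differs substantially from the paper's. For the qualitative part \eqref{full:longcost}--\eqref{full:weakconv}, the paper argues by $\Gamma$-convergence of $\cH(\cdot\mid\rmR_{0,T_n})$ to $\cH(\cdot\mid\frm\otimes\frm)$ on the weakly compact set $\Pi(\bar\mu,\bar\nu)$ (Proposition~\ref{prop:full gamma conv}) and invokes the fundamental theorem of $\Gamma$-convergence. Your two-sided bound via the product coupling and duality is a legitimate alternative for this part and should go through under~\ref{FH4}, with the extension to~\ref{FH3} handled as you indicate.

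The quantitative part, however, has a genuine gap. First, the tools recorded in Section~\ref{sec:2} (the gradient contraction of Proposition~\ref{prop:contraction} and the Wasserstein contraction~\eqref{BEcontW}) control gradients and transport distances, not oscillations of the transition density; your warm-up sketch does not explain how to pass from these to a pointwise or $L^\infty$ bound on $r_T-1$. Second, and more structurally, even granting such a bound, plugging $|r_T-1|\lesssim\delta^{-3}e^{-\kappa T}$ into your two inequalities yields
\begin{equation*}
\abs{\cC^F_T(\bar\mu,\bar\nu)-\cH(\bar\mu\mid\frm)-\cH(\bar\nu\mid\frm)}\lesssim \delta^{-3}e^{-\kappa T}\int\bar\rho\,\bar\eta\,\De(\frm\otimes\frm)=\delta^{-3}e^{-\kappa T},
\end{equation*}
an \emph{additive} error rather than the \emph{multiplicative} one in~\eqref{good:full:exp conv}, whose right-hand side must vanish when $\bar\mu=\bar\nu=\frm$. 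The paper obtains the correct multiplicative structure by working with the Schr\"odinger bridge itself rather than the reference kernel: the representation formula~\eqref{full:identitycosthalftime} expresses $\cC^F_T-\cH(\bar\mu\mid\frm)-\cH(\bar\nu\mid\frm)$ in terms of $\cH(\bar\mu^T_{T/2}\mid\frm)$ and integrals of the correctors $\bar\varphi^T,\bar\psi^T$, all of which are then bounded by $\delta^{-3}e^{-\kappa T}\,\cC^F_T(\bar\mu,\bar\nu)$ via Theorem~\ref{full:teo entropic turnpike} and Corollary~\ref{full:prop:T/2}; combined with~\eqref{eq:full:Talagrand4} this delivers~\eqref{good:full:exp conv}. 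Your proposal bypasses the $fg$-decomposition and the corrector machinery entirely, and those are precisely what make the error proportional to $\cC^F_T$ and hence to the marginal entropies.
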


The proof of the qualitative statements in the above results rely on $\Gamma$-convergence and some simple consequences of the heat kernel estimates in \cite{DelarueMenozzi}.  The key ingredient in the proof of the exponential estimates is a representation formula for the difference 
\[\cC_{T}\left(\mu,\nu\right)- \cH\left(\mu|\frm_X\right) -  \cH\left(\nu|\frm_X\right) \] 
 that we establish at Lemma \ref{lemmarapprcosto} and allows to profit from the turnpike estimates at Theorem~\ref{teo entropic turnpike} and \ref{full:teo entropic turnpike} below. 
 
\subsection{Long-time behaviour of Schr\"odinger bridges}\label{sec: tpike res}

One of the main contributions of this article are the upcoming quantitative results on the long-time behaviour of Schr\"odinger bridges, which imply in particular exponential convergence to $\frm$ when looking at timescales of order $T$ and exponential convergence in $T$ to the Langevin dynamics  when looking at the Schr\"odinger bridge over a fixed time-window $[0,t]$.

\subsubsection{Entropic turnpike property}

We propose two turnpike results in which distance from equilibrium is measured through the relative entropy $\cH(\cdot|\frm)$ and the Fisher information $\cI(\cdot)$, see \eqref{def:Fisher info} below. The use of $\cH(\cdot|\frm)$ is natural in light of the fact that the costs $\cC_T(\mu,\nu)$ and $\cC^F_T(\bar\mu,\bar\nu)$ are also relative entropies, but computed on different spaces. On the other hand, the bound on $\cI(\cdot)$ is reminiscent of the celebrated Bakry-\'Emery estimates \cite{BAKEM}. It is worth noticing that entropic turnpike estimates seem to be very rare in the existing literature and even less so are bounds on the Fisher information: we shall elaborate more on this at Remark \ref{rem:Fisher}. The key assumption for obtaining \eqref{entropic eq} and \eqref{full:entropic eq} is~\ref{H2}, asking $U$ to be strongly convex and such that the difference between the smallest and largest eigenvalues of $\nabla^2U(x)$ is controlled by the friction parameter $\gamma$ uniformly in $x$. This assumption is often encountered in works dealing with the long-time behaviour of the (uncontrolled) kinetic Fokker-Planck equation, see e.g. \cite{bolley2010trend}. Although exponential $L^2$ estimates are known to hold under considerably weaker assumptions (see e.g. \cite{herau2004isotropic} and \cite{contract:singular:pot,erg:lyap:langevin} for singular potentials), and entropic estimates assuming a bounded and positive Hessian have been known for more than a decade \cite{cedric2009hypocoercivity}, it is only recently \cite{guillin2021kinetic} that entropic estimates have been obtained beyond the bounded Hessian case. In light of this, the question of how to improve our results is quite interesting and deserves to be further investigated. Let us state the announced results, beginning with \ref{KSPd}. To do so, we need another bit of notation: if $\rmP^T$ is the unique solution of \ref{KSPd}, we call \emph{entropic interpolation} $(\mu^T_{t})_{t\in[0,T]}$ the marginal flow of $\rmP^T$ and denote $\rho^T_t$ its density against $\frm$, i.e.
\begin{equation*}
\forall t\in[0,T],\quad    \mu^T_{t} = (X_t,V_t)_{\#}\rmP^T,\qquad \rho^T_t\coloneqq\frac{\De\mu^T_t}{\De\frm}\,.
\end{equation*}
With the obvious small modifications, we also define the entropic interpolation $({\bar\mu}^T_{t})_{t\in[0,T]}$ and their densities $({\bar\rho}^T_{t})_{t\in[0,T]}$ in the framework of \ref{KFSPd}.
Furthermore, we introduce the functional $\cI$ to be the Fisher information with respect to $\frm$, defined for any $q\ll\frm\in\cP(\R^{2d})$ as
\begin{equation}\label{def:Fisher info} \cI(q) := \begin{cases} \int_{\bbR^{2d}} \abs{\nabla \log \frac{\De q}{\De \frm}}^2\, \De q & \quad \mbox{if $\nabla \log \frac{\De q}{\De \frm}\in L^2(q)$,}  \\ +\infty, & \quad \mbox{otherwise.} \end{cases}.\end{equation}

\begin{theorem}[Entropic turnpike for KSP]\label{teo entropic turnpike}
Grant \ref{H1}, \ref{H2} and \ref{H3}. There exists a positive constant $C_{d,\alpha,\beta,\gamma}$ such that for any $0<\delta\leq1$ and $t\in[\delta,\,T-\delta]$,
as soon as $T>\tempo$, it holds
\begin{equation}\label{fisher eq}
  \cI(\mu^T_{t})\leq C_{d,\alpha,\beta,\gamma}\,  \delta^{-3}\, e^{-2\kappa[t\wedge(T-t)]}\,\cC_T(\mu,\nu)\,,
 \end{equation}
 \begin{equation}\label{entropic eq}
  \cH(\mu^T_{t}|\frm)\leq C_{d,\alpha,\beta,\gamma}\,  \delta^{-3}\, e^{-2\kappa[t\wedge(T-t)]}\,\cC_T(\mu,\nu)\,.
 \end{equation}
Moreover, as soon as $T>(\tempo)\vee\tempobis$, we have
 \begin{equation}\label{bis:entropic eq}
  \cH(\mu^T_{t}|\frm)\leq C_{d,\alpha,\beta,\gamma}\,  \delta^{-3}\, e^{-2\kappa[t\wedge(T-t)]}\,\bigg[\cH(\mu|\frm_X)+\cH(\nu|\frm_X)\bigg]\,.
 \end{equation}
\end{theorem}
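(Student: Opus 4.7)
The strategy is to exploit the multiplicative decomposition of the bridge density from Proposition~\ref{fg lemma} together with the hypocoercive gradient contraction of the Langevin semigroup implicit in Proposition~\ref{prop:contraction}. Writing $P_t$ for the Langevin semigroup and $P^*_t$ for its adjoint (the time-reversed Langevin semigroup with flipped velocities), the density $\rho^T_t$ of $\mu^T_t$ with respect to $\frm$ factorises as $\rho^T_t(x,v) = \varphi^T_t(x,v)\,\psi^T_t(x,v)$, with $\varphi^T_t(x,v) = \bbE_\rmR[f^T(X_0)\mid (X_t,V_t)=(x,v)]$ and $\psi^T_t(x,v) = (P_{T-t} g^T)(x,v)$. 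Both functions solve forward/backward kinetic Fokker-Planck-type evolutions, and this structural fact is what unlocks the turnpike behaviour: $\varphi^T_t$ contracts as $t\to T$ and $\psi^T_t$ contracts as $t\to 0$.

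For the Fisher bound \eqref{fisher eq} I would decompose $\nabla\log\rho^T_t = \nabla\log\varphi^T_t + \nabla\log\psi^T_t$ and use Cauchy-Schwarz to split $\cI(\mu^T_t)$ into contributions involving $\nabla\log P^*_t f^T$ and $\nabla\log P_{T-t} g^T$. The hypocoercive gradient commutation of Proposition~\ref{prop:contraction}, which under~\ref{H2} yields a bound of the form $|\nabla P_s h|^2 \lesssim e^{-2\kappa s}\, P_s|\nabla h|^2$ in a suitably distorted norm mixing $\nabla_x$ and $\nabla_v$, produces the factor $e^{-2\kappa[t\wedge(T-t)]}$. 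Since $f^T, g^T$ have no a priori Fisher regularity, I would first smooth over the short intervals $[0,\delta]$ and $[T-\delta,T]$ using a short-time hypocoercive regularisation of the shape $\cI(P_\delta q) \leq C\,\delta^{-3}\,\cH(q|\frm)$, where the exponent $\delta^{-3}$ is characteristic of the kinetic setting due to the velocity degeneracy. The remaining entropy $\cH(\mu^T_\delta|\frm)$ is then controlled by $\cC_T(\mu,\nu)$ by data-processing of the relative entropy through the projection $(X_\delta,V_\delta)_{\#}$, together with the fact that $\cH(\mu^T_{0,T}|\frm\otimes\frm)$ differs from $\cC_T(\mu,\nu)=\cH(\mu^T_{0,T}|\rmR_{0,T})$ by an $O(e^{-\kappa T})$ term. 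For \eqref{entropic eq} I would then integrate the Fisher bound in time and invoke a logarithmic Sobolev inequality for $\frm$, which under~\ref{H2} follows from Bakry-\'Emery applied to the strictly convex $U$ in $x$ tensorised with the Gaussian in $v$, or alternatively integrate a hypocoercive entropy-dissipation identity relating $\partial_s \cH(P_s q|\frm)$ to a modified Fisher information.

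The upgrade to \eqref{bis:entropic eq} requires replacing $\cC_T(\mu,\nu)$ in \eqref{entropic eq} by $\cH(\mu|\frm_X)+\cH(\nu|\frm_X)$. Since the Talagrand inequality \eqref{gio:talagrand} is itself a corollary of the present turnpike estimate, I would sidestep circularity by constructing an explicit admissible coupling (e.g.\ steering $\mu\otimes\frm_V$ to $\frm$ over $[0,T/2]$ and $\frm$ to $\nu\otimes\frm_V$ over $[T/2,T]$ by gluing two uncontrolled Langevin pieces through $\frm$) in order to obtain an a priori bound $\cC_T(\mu,\nu)\leq C(\cH(\mu|\frm_X)+\cH(\nu|\frm_X))$ uniformly in $T$ large enough, and then combining it with \eqref{entropic eq}. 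I expect the main obstacle to be the execution of the gradient commutation step with sharp exponent: identifying the distorted Fisher functional so that Proposition~\ref{prop:contraction} delivers exactly the rate $2\kappa$, and handling the multiplicative cross-term that arises when one differentiates $\log(\varphi^T_t \psi^T_t)$ without worsening the constants, are the places where the kinetic setting departs most visibly from the classical overdamped analysis of~\cite{conforti2021formula}.
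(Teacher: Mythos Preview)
Your overall architecture matches the paper's: the $fg$-decomposition $\rho^T_t=f^T_t g^T_t$, the distorted-norm gradient commutation of Proposition~\ref{prop:contraction} to extract $e^{-2\kappa[t\wedge(T-t)]}$, a short-time $\delta^{-3}$ regularisation near the endpoints, and the log-Sobolev inequality~\eqref{LS} to pass from~\eqref{fisher eq} to~\eqref{entropic eq}. The paper formalises this through the \emph{correctors} $\varphi^T(s)=\int|\nabla\log f^T_s|^2_{N^{-1}}\rho^T_s\,\De\frm$ and $\psi^T(s)=\int|\nabla\log g^T_s|^2_{M^{-1}}\rho^T_s\,\De\frm$, proves their exponential contraction (Lemma~\ref{eq:correctors}) exactly as you outline, and then uses $\cI(\mu^T_t)\lesssim\varphi^T(t)+\psi^T(t)$.

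There is, however, a genuine gap in your regularisation step. The bound you invoke, $\cI(P_\delta q)\leq C\delta^{-3}\cH(q|\frm)$, governs the \emph{free} Langevin flow; but $\mu^T_\delta$ is not $P^*_\delta$ applied to any measure, so this inequality does not touch $\varphi^T(\delta)$ or $\psi^T(T-\delta)$. The paper instead applies a pointwise log-Harnack bound (Guillin--Wang) to each factor separately: $|\nabla P^*_\delta f^T|^2\lesssim\delta^{-3}\bigl[P^*_\delta(f^T\log f^T)-(P^*_\delta f^T)\log(P^*_\delta f^T)\bigr]\,P^*_\delta f^T$, and likewise for $g^T$. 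Integrating this against $g^T_\delta\,\De\frm$ produces $\cI(\mu^T_t)\lesssim\delta^{-3}\bigl(\cC_T(\mu,\nu)-\cH(\mu^T_t|\frm)\bigr)$ (Proposition~\ref{prop: regularizing effect}), which is the correct regularisation for the bridge. The paper then closes the argument via a non-obvious bootstrap (Proposition~\ref{gio:prop:T/2}, adapted from deterministic turnpike theory): running the contraction forward for $\varphi^T$ and backward for $\psi^T$ over the full interval, and using $\varphi^T(T-\delta)\gtrsim\psi^T(T-\delta)-2\cI(\mu^T_{T-\delta})$, one shows $\varphi^T(\delta),\psi^T(T-\delta)\lesssim\cI(\mu^T_\delta)+\cI(\mu^T_{T-\delta})$ for $T$ large enough. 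Your proposal does not contain this step.

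For~\eqref{bis:entropic eq}, the paper does not construct an explicit competitor. Instead it uses the cost identity of Lemma~\ref{lemmarapprcosto}, the already-established corrector bounds~\eqref{halfcorrectors}, and the marginal-entropy comparison of Theorem~\ref{exp marginal entropies} to prove directly that $\cC_T(\mu,\nu)\lesssim\cH(\mu|\frm_X)+\cH(\nu|\frm_X)$ for $T$ large (Lemma~\ref{Talagrand4}); there is no circularity because~\eqref{fisher eq} and~\eqref{entropic eq} are established first with $\cC_T(\mu,\nu)$ on the right-hand side. Your gluing-through-$\frm$ idea would require reaching $\nu\otimes\frm_V$ exactly, which the uncontrolled dynamics does not do.
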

\begin{theorem}[Entropic turnpike for KFSP]\label{full:teo entropic turnpike}
Grant \ref{H1}, \ref{H2} and \ref{FH3}.There exists a positive constant $C_{d,\alpha,\beta,\gamma}$ such that for any $0<\delta\leq1$ and $t\in[\delta,\,T-\delta]$, as soon as $T>\tempo$, it holds
\begin{equation}\label{full:fisher eq}
  \cI(\bar\mu^T_{t})\leq C_{d,\alpha,\beta,\gamma}\,  \delta^{-3}\, e^{-2\kappa[t\wedge(T-t)]}\,\cC_T^F(\bar\mu,\bar\nu)\,,
 \end{equation}
 \begin{equation}\label{full:entropic eq}
  \cH(\bar{\mu}^T_{t}|\frm)\leq C_{d,\alpha,\beta,\gamma}\,  \delta^{-3}\, e^{-2\kappa[t\wedge(T-t)]}\,\cC_T^F(\bar\mu,\bar\nu)\,.
 \end{equation}
 Moreover, as soon as $T>(\tempo)\vee\tempobis$, we have
  \begin{equation}\label{bis:full:entropic eq}
  \cH(\bar{\mu}^T_{t}|\frm)\leq C_{d,\alpha,\beta,\gamma}\,  \delta^{-3}\, e^{-2\kappa[t\wedge(T-t)]}\,\bigg[\cH(\bar\mu|\frm)+\cH(\bar\nu|\frm)\bigg]\,.
 \end{equation}
\end{theorem}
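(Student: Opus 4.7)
The plan is to mirror the blueprint of Theorem~\ref{teo entropic turnpike}. As a first step I would appeal to the approximation results of Section~\ref{sec: approx} to reduce to the situation in which~\ref{FH4} is in force, so that all semigroup calculations can be performed on bounded, sufficiently smooth potentials. Under this extra regularity, the $fg$-decomposition of~\cite{ruschendorf1993note}---which is structurally simpler than the~\ref{KSP} variant since now both marginals are genuine probability measures on $\R^{2d}$---gives non-negative potentials $f^T,g^T:\R^{2d}\to\R_+$ such that $\bar\rho^T_t=\varphi_t\,\psi_t$, where $\varphi_t=P_t f^T$ is propagated by the Langevin semigroup $(P_t)_{t\geq0}$ associated with~\eqref{langevin} and $\psi_t$ is propagated, starting from $g^T$ at time $T$, by the physically reversed semigroup (velocity flip plus time reversal). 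All subsequent estimates then reduce to gradient estimates for $P_t f^T$ and its reversed analogue.

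The core of the proof of~\eqref{full:fisher eq} is the local gradient contraction Proposition~\ref{prop:contraction}: under~\ref{H2} one has $|\nabla P_s h|^2\leq C_{d,\alpha,\beta,\gamma}\,e^{-2\kappa s}\,P_s(|\nabla h|^2)$, the constant absorbing the twist needed to overcome the hypocoercive nature of~\eqref{KFP}, which is made possible by the assumption $\sqrt\beta-\sqrt\alpha\leq\gamma$. Since $\nabla\log\bar\rho^T_t=\nabla\log\varphi_t+\nabla\log\psi_t$, bounding $\cI(\bar\mu^T_t)$ reduces via Cauchy-Schwarz to controlling $\int|\nabla\log\varphi_t|^2\,\varphi_t\,\De\frm$ and its $\psi$-counterpart. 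Propagating the contraction over the interval $[0,t-\delta]$ (respectively $[t+\delta,T]$) yields the factor $e^{-2\kappa(t\wedge(T-t))}$, while the residual factor $\delta^{-3}$ arises from the short-time smoothing estimates for the degenerate kinetic semigroup gathered in Section~\ref{sec:2}, applied on a window of length $\delta$ to control $\int|\nabla\log f^T|^2\,f^T\,\De\frm$ (which is not a priori finite) in terms of $\int\log f^T\,\De\bar\mu$. The latter, together with its $g^T$-analogue, equals $\cC_T^F(\bar\mu,\bar\nu)$ by the duality~\eqref{full:dual}, explaining the cost factor on the right-hand side.

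The entropy bound~\eqref{full:entropic eq} follows from~\eqref{full:fisher eq} via the logarithmic Sobolev inequality that $\frm=\frm_X\otimes\frm_V$ satisfies under~\ref{H2} (strong convexity of $U$ for $\frm_X$, standard Gaussian for $\frm_V$, tensorisation), giving $\cH(\bar\mu^T_t|\frm)\leq C\,\cI(\bar\mu^T_t)$. Finally, to obtain the refined bound~\eqref{bis:full:entropic eq} I would insert the entropic Talagrand inequality~\eqref{gio:full:talagrand} from Theorem~\ref{full:teolongcost} into~\eqref{full:entropic eq}; the threshold $T>(\tempo)\vee\tempobis$ is precisely what forces the Talagrand multiplier to stay bounded by, say, $2$, so that the cost $\cC_T^F(\bar\mu,\bar\nu)$ can be replaced by $\cH(\bar\mu|\frm)+\cH(\bar\nu|\frm)$ up to doubling the constant.

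The main obstacle I anticipate is the hypocoercive character of the Langevin semigroup: the naive Bakry-\'Emery $\Gamma_2$ calculation in the Euclidean metric on $\R^{2d}$ is not positive, and one has to work either with a twisted metric in the spirit of~\cite{Baudoin} or with couplings as in~\cite{eberle2019couplings,monmarche2020almost} in order to recover a usable contraction. Both routes introduce a dimension- and parameter-dependent multiplicative constant which gets absorbed into $C_{d,\alpha,\beta,\gamma}$ but must be tracked carefully when combined with the short-time kinetic heat kernel estimates to produce the final exponent $\delta^{-3}$. A second, more minor subtlety is the non-reversibility of~\eqref{langevin}: all semigroup estimates for the forward potential $\varphi_t$ must be re-derived for the reversed potential $\psi_t$, which is achieved by exploiting the physical reversibility of the dynamics (invariance of $\frm$ under $(x,v)\mapsto(x,-v)$).
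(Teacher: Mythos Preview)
Your overall plan---reduce to~\ref{FH4} via Section~\ref{sec: approx}, use the $fg$-decomposition, exploit the twisted-norm contraction of Proposition~\ref{prop:contraction}, regularise with the short-time kinetic estimates, then pass to~\eqref{full:entropic eq} and~\eqref{bis:full:entropic eq} via log-Sobolev~\eqref{LS} and~\eqref{eq:full:Talagrand4}---is exactly the paper's. But the reduction step is wrong as written. There is no Cauchy--Schwarz taking $\cI(\bar\mu^T_t)$ to $\int|\nabla\log\varphi_t|^2\,\varphi_t\,\De\frm$; the right objects are the \emph{correctors}
\[
\bar\varphi^T(t)=\int_{\R^{2d}}|\nabla\log\bar f^T_t|^2_{N^{-1}}\,\bar\rho^T_t\,\De\frm,\qquad \bar\psi^T(t)=\int_{\R^{2d}}|\nabla\log\bar g^T_t|^2_{M^{-1}}\,\bar\rho^T_t\,\De\frm,
\]
integrated against the \emph{full} density $\bar\rho^T_t=\bar f^T_t\bar g^T_t$ (and incidentally $\bar f^T_t=P^*_t\bar f^T$, not $P_t\bar f^T$). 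This is not cosmetic. The presence of the factor $\bar g^T_t$ is precisely what makes the boundary term collapse to the cost: after the Guillin--Wang bound on $|\nabla P^*_\delta\bar f^T|^2$ one must integrate against $\bar g^T_\delta\,\De\frm$, and the adjoint identity $\int P^*_\delta(\bar f^T\log\bar f^T)\,\bar g^T_\delta\,\De\frm=\int\log\bar f^T\,\De\bar\mu$ is what produces $h_f^T(0)$. Integrating against $\De\frm$ alone gives $\int\bar f^T\log\bar f^T\,\De\frm$, which is not controlled by $\cC^F_T(\bar\mu,\bar\nu)$; your claimed link ``$\int|\nabla\log f^T|^2 f^T\,\De\frm\lesssim\delta^{-3}\int\log f^T\,\De\bar\mu$'' fails for this reason.

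There is also a structural step in the paper that your outline omits. After the regularising estimate $\cI(\bar\mu^T_t)\lesssim\delta^{-3}\bigl(\cC^F_T-\cH(\bar\mu^T_t|\frm)\bigr)$ of Proposition~\ref{prop: full regularizing effect}, the paper does \emph{not} bound the individual correctors at time~$\delta$ directly; instead it runs a bootstrap (Proposition~\ref{gio: full prop:T/2}, adapted from~\cite{trelat2015turnpike}) that plays the two correctors against each other: write $\bar\varphi^T(T-\delta)$ via $\nabla\log\bar f^T_{T-\delta}=\nabla\log\bar\rho^T_{T-\delta}-\nabla\log\bar g^T_{T-\delta}$, contract $\bar\varphi^T$ forward and $\bar\psi^T$ backward across the whole window, and absorb the resulting $e^{-4\kappa T}\bar\psi^T(T-\delta)$ term into the left-hand side. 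This loop yields $\bar\varphi^T(\delta),\,\bar\psi^T(T-\delta)\lesssim\cI(\bar\mu^T_\delta)+\cI(\bar\mu^T_{T-\delta})$ and is the origin of the threshold $T>\tempo$ in the statement.
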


\begin{remark}\label{rem: tunrpike literature}
If we compare our results with what is known in deterministic control we remark that, quite curiously, exponential estimates for the deterministic noiseless version of \eqref{eq: KSPd stoch control}, obtained removing the Brownian motion form the controlled state equation, do not seem to be covered from existing results, even in the case when $\mu$ and $\nu$ are Dirac measures.\footnote{For example, if we compare with the reference work \cite{trelat2015turnpike}, the matrix $W$ defined at Eq.~(10) therein would not be invertible for the problem under consideration, which thus fails to satisfy the hypothesis of the main turnpike result obtained there.} For linear-quadratic problems though,  the result is well known, see e.g. \cite{breiten2020turnpike} for precise estimates. Theorems \ref{teo entropic turnpike} and \ref{full:teo entropic turnpike} provide \emph{global} turnpike estimates, that is to say we do not ask $\mu$ and $\nu$ to be close to $\frm$. We do ask $\cH(\mu|\frm),\cH(\nu|\frm)<+\infty$, but this condition is very mild and necessary for the Schr\"odinger problem to have a finite value. This is in contrast with most exponential turnpike estimates we are aware of in deterministic control (see e.g. \cite[Theorem 1]{trelat2015turnpike}). The passage from local to global estimates seems to be possible \cite{trelat2018integral,trelat2020linear} under some extra assumptions, such as the existence of a storage function, but this comes at the price of losing  quite some information on the multiplicative constants appearing in \eqref{entropic eq}. Moreover, the condition $T>\tempo$ of Theorem~\ref{teo entropic turnpike} should be replaced with a condition of the form $T>T_0$ with $T_0$ depending on the initial conditions and potentially very large. 
\end{remark}

\begin{remark}\label{rem:Fisher}  The bound on the Fisher information is our strongest result as it implies immediately an entropic bound thanks to the logarithmic Sobolev inequality \eqref{LS}. Moreover, entropic bounds are stronger than bounds expressed by means of a transport distance such as $W_1$ or $W_2$, since $\frm$ satisfies Talagrand's inequality \eqref{talagrand}.
\end{remark}

\begin{remark}
Proving the turnpike property for Schr\"odinger bridges in this context is harder than in the classical setting, and we need to work under stronger assumptions on the potential $U$ than its strong convexity. This is not a surprise. Indeed, proving the exponential convergence to equilibrium for the kinetic Fokker-Planck equation is a difficult problem that  has been, and still is, intensively studied by means of either a probabilistic or an analytic approach, see \cite{cattiaux2019entropic,eberle2019couplings,talay2002stochastic,guillin2021kinetic} for some references on the probabilistic approach.
Following the terminology introduced by Villani in his monograph \cite{cedric2009hypocoercivity}, this obstruction is a manifestation of the \emph{hypocoercive} nature of the kinetic Fokker-Planck equation. \ref{KSP}  may indeed be regarded as the prototype of an hypocoercive stochastic control problem.  For the moment, we have been able to show the turnpike property under a quasilinearity assumption.
Assumptions of this type, where the friction parameter has to be in some sense large in comparison with the spectrum of $\nabla^2U$ are commonly encountered in the literature. In the language of probability, they ensure that the synchronous coupling is contracting for the Langevin dynamics \cite{bolley2010trend,monmarche2020almost}. On the other hand, from an analytical standpoint, Assumption~\ref{H2} implies \emph{local} gradient bounds for the semigroup generated by the Langevin dynamics \cite{Baudoin}. Finally, we recall that the exponential rate $\kappa$ of Theorems \ref{teo entropic turnpike} and \ref{full:teo entropic turnpike} is precisely the one, computed e.g. in \cite{monmarche2020almost,bolley2010trend}, at which synchronous coupling is contractive for the (uncontrolled) Langevin dynamics. 
\end{remark}

\paragraph{\bf Proof strategy} 
A general idea to obtain exponential speed of convergence to equilibrium for hypocoercive equations systematically exploited in \cite{cedric2009hypocoercivity} is that of modifying the "natural" Lyapunov function of the system by adding some extra terms in such a way that proving exponential dissipation becomes an easier task. For the Langevin dynamics, a suitable modification of the natural Lyapunov functional, that is the relative entropy $\cH(\cdot|\frm)$, is obtained considering
\begin{equation*}
\mu\mapsto a     \cH(\mu|\frm) +\cI(\mu)
\end{equation*}
for a carefully chosen constant $a>0$. Emulating Bakry-\'Emery $\Gamma$-calculus \cite{Baudoin} it is possible to show that the modified Lyapunov functional decays exponentially along solutions of the kinetic Fokker-Planck equation. Our proof of the turnpike property consists in implementing this abstract idea on the $fg$-decomposition of the entropic interpolation, as we now briefly explain. Indeed, in order to bound $\cI(\mu^T_t)$ one is naturally led to consider the quantities 
\begin{subequations}
\begin{equation}\label{bward corr}
\int_{\R^{2d}}\abs{\nabla \log f_s^T}^2 f_s^T g_s^T \,\De \frm\,,
\end{equation}
\begin{equation}\label{fward corr}
\int_{\R^{2d}} \abs{\nabla \log g_s^T}^2f_s^T g_s^T\,\De \frm\,.
\end{equation}
\end{subequations}
However, it is not clear how to obtain a differential inequality ensuring exponential (forward) dissipation of \eqref{bward corr} and exponential (backward) dissipation of \eqref{fward corr}. But, as we show at Lemma \ref{eq:correctors}, it is possible to find two norms $|\cdot|_{M^{-1}}$ and $|\cdot|_{N^{-1}}$, that are  equivalent to the Euclidean norm and such that if we define
\begin{equation}\label{defcorrectors intro}
	\varphi^T(s) \coloneqq \int_{\R^{2d}}\abs{\nabla \log f_s^T}^2_{N^{-1}} f_s^Tg_s^T \,\De \frm \quad\mbox{and}\quad \psi^T(s) \coloneqq \int_{\R^{2d}} \abs{\nabla \log g_s^T}^2_{M^{-1}} f_s^Tg_s^T \,\De \frm\,,
\end{equation}
then $\varphi^T(s)$ and $\psi^T(s)$ satisfy the desired exponential estimates. To complete the proof, one needs to take care of the boundary conditions. This part is non trivial as it demands to prove certain regularity properties of the $fg$-decomposition and it is accomplished in two steps: we first show at Proposition~\ref{prop: regularizing effect} a regularising property of entropic interpolations, namely that if $\cH(\mu|\frm_X),\cH(\nu|\frm_X)$ are finite, then the Fisher information $\cI(\mu^T_t)$ is finite for any $t\in(0,T)$. The proof of this property is based on a gradient bound obtained in \cite{GuillinWang} and is of independent interest. The second step (Proposition \ref{gio:prop:T/2}) consists in showing that for a fixed small $\delta$, $\varphi^T(\delta)$  and $\psi^T(T-\delta)$ can be controlled with by the sum of $\cI(\mu^T_{\delta})$ and $\cI(\mu^T_{T-\delta})$. We prove this estimate adapting an argument used in \cite{trelat2015turnpike} in the analysis of deterministic finite dimensional control problems.

\medskip
\subsubsection{Convergence to the Langevin dynamics over a fixed time-window}

We are able to precisely analyse the behaviour of entropic interpolations for a fixed time $t$, while $T$ grows large. More precisely, we show that the (uncontrolled) Langevin dynamics and the Schr\"odinger bridge are exponentially close in the long-time regime $T\to\infty$, for all time-windows $[0,t]$. Note that this result cannot be deduced from the turnpike estimates of the former section. 

\begin{theorem}\label{short time}
Under hypotheses~\ref{H1},~\ref{H2} and \ref{H3}, there exists a positive constant $C_{d,\alpha,\beta,\gamma}$ such that for any $0<\delta\leq 1$ and  $t\in[0,\,T-\delta]$, as soon as $T>\tempo$, it holds
\[\cW_{2}(\mu^T_{t},\mu^\oo_{t})\leq C_{d,\alpha,\beta,\gamma}\,\delta^{-\frac{3}{2}}\,e^{-\kappa(T-t)}\,\sqrt{\cC_T(\mu,\nu)}\,,\]
$\mu^\oo_t$ is the law of $(X_t,\,V_t)$ satisfying
\begin{equation}\label{eq: short time langevin}
    \begin{cases}
     \De X_t = V_t\De t,\\
     \De V_t = -\nabla U(X_t)\De t -\gamma V_t\De t + \sqrt{2\gamma}\,\De B_t,\\
     (X_0,V_0)\sim \mu\otimes\frm_V.
    \end{cases}
\end{equation}
\end{theorem}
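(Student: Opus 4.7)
The plan is to couple the Schr\"odinger bridge synchronously with the uncontrolled Langevin dynamics \eqref{eq: short time langevin} and to estimate the resulting discrepancy via the Fisher-type turnpike control on $\nabla_v\log g^T_s$ that is produced inside the proof of Theorem~\ref{teo entropic turnpike}.

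First, by Proposition~\ref{fg lemma} and the F\"ollmer--L\'eonard representation of finite-entropy path measures, the canonical process under $\rmP^T$ is the unique strong solution of
\begin{equation*}
\De X_s = V_s\,\De s,\quad \De V_s = \bigl[-\nabla U(X_s)-\gamma V_s + 2\gamma\,\nabla_v\log g^T_s(X_s,V_s)\bigr]\De s + \sqrt{2\gamma}\,\De B_s,\quad (X_0,V_0)\sim\mu^T_0,
\end{equation*}
where $g^T_s(x,v)\coloneqq\bbE_\rmR[g^T(X_T)\mid X_s=x,V_s=v]$. I would drive \eqref{eq: short time langevin} with the same Brownian motion, started from an optimal coupling of $\mu^T_0$ and $\mu\otimes\frm_V$ for the twisted distance associated to the gradient contraction of Proposition~\ref{prop:contraction}. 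Under \ref{H2} this coupling is a contraction of rate $\kappa$ for the pure Langevin dynamics in a norm equivalent to the Euclidean one, so It\^o's formula applied to the squared twisted norm of the difference followed by Gronwall yields
\begin{equation*}
\cW_2^2(\mu^T_t,\mu^\infty_t)\leq C\, e^{-2\kappa t}\,\cW_2^2(\mu^T_0,\mu\otimes\frm_V) + C\int_0^t e^{-2\kappa(t-s)}\,\bbE_{\rmP^T}\bigl[|\nabla_v\log g^T_s|^2\bigr]\De s.
\end{equation*}

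The drift integral is controlled by the bound on $\psi^T(s)=\int|\nabla\log g^T_s|^2_{M^{-1}}\,f^T_s g^T_s\,\De\frm$ obtained in the proof of Theorem~\ref{teo entropic turnpike} via Lemma~\ref{eq:correctors}, namely $\psi^T(s)\leq C\delta^{-3}e^{-2\kappa(T-s)}\cC_T(\mu,\nu)$ for $s\in[\delta,T-\delta]$; inspecting the gradient-contraction argument shows that this estimate propagates to $s\in[0,\delta]$ at the cost of absorbing a further $\delta$ into the constant. Equivalence of norms transfers the bound to $\bbE_{\rmP^T}[|\nabla_v\log g^T_s|^2]$, and a direct computation gives
\begin{equation*}
\int_0^t e^{-2\kappa(t-s)}\,\bbE_{\rmP^T}[|\nabla_v\log g^T_s|^2]\,\De s\leq C\,\delta^{-3}\,\kappa^{-1}\,e^{-2\kappa(T-t)}\,\cC_T(\mu,\nu).
\end{equation*}

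For the initial term I would use that $\mu^T_0$ and $\mu\otimes\frm_V$ share the spatial marginal $\mu$: coupling identically in $x$ and fiber-wise optimally in $v$, combined with the Gaussian Talagrand inequality for $\frm_V$, yields $\cW_2^2(\mu^T_0,\mu\otimes\frm_V)\leq 2\cH(\mu^T_0|\mu\otimes\frm_V)=2[\cH(\mu^T_0|\frm)-\cH(\mu|\frm_X)]$. This entropy gap is then bounded by $C\delta^{-3}e^{-2\kappa T}\cC_T(\mu,\nu)$ by combining the F\"ollmer--Girsanov identity $\cC_T(\mu,\nu)=\cH(\mu^T_0|\frm)+\gamma\,\bbE_{\rmP^T}[\int_0^T|\nabla_v\log g^T_s|^2\,\De s]$ with the exponential convergence of $\cC_T(\mu,\nu)$ to $\cH(\mu|\frm_X)+\cH(\nu|\frm_X)$ from Theorem~\ref{long-time cost} and the integrated bound on $\psi^T$ above. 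Since $e^{-2\kappa(T+t)}\leq e^{-2\kappa(T-t)}$ for $t\geq 0$, plugging both estimates into the Gronwall bound and taking square roots produces the announced inequality. The main obstacle is precisely this last step: the turnpike of Theorem~\ref{teo entropic turnpike} is not available at the endpoint $s=0$, so the control of $\cW_2(\mu^T_0,\mu\otimes\frm_V)$ must be routed through the $fg$-structure and the global identity for $\cC_T$ rather than through a direct interior estimate.
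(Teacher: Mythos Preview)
Your overall architecture---synchronous coupling, Gronwall for the twisted norm, the corrector bound $\psi^T(s)\lesssim\delta^{-3}e^{-2\kappa(T-s)}\cC_T(\mu,\nu)$ for the drift integral, and fiberwise Talagrand for $\frm_V$ to reduce $\cW_2^2(\mu^T_0,\mu\otimes\frm_V)$ to the entropy gap $\cH(\mu^T_0|\frm)-\cH(\mu|\frm_X)$---is exactly what the paper does. The gap is in your last step, where you propose to bound this entropy difference by combining the Girsanov identity $\cC_T(\mu,\nu)=\cH(\mu^T_0|\frm)+\gamma\int_0^T\bbE_{\rmP^T}[|\nabla_v\log g^T_s|^2]\,\De s$ with Theorem~\ref{long-time cost}. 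This route cannot deliver the rate $e^{-2\kappa T}$ you claim: subtracting $\cH(\mu|\frm_X)$ leaves the term $\cH(\nu|\frm_X)-\gamma\int_0^T\bbE[|\nabla_v\log g^T_s|^2]\,\De s$, and you have no mechanism to show this is exponentially small (the integral of $\psi^T$ over the whole of $[0,T]$ is of order $\cC_T$, not $e^{-\kappa T}\cC_T$). Even a charitable reading---using Lemma~\ref{lemmarapprcosto} at $t=T/2$ together with the entropic turnpike and the nonnegativity of $\cH(\mu^T_T|\frm)-\cH(\nu|\frm_X)$---yields only $\cH(\mu^T_0|\frm)-\cH(\mu|\frm_X)\lesssim\delta^{-3}e^{-\kappa T}\cC_T$, which after multiplication by $e^{-2\kappa t}$ fails to dominate $e^{-2\kappa(T-t)}$ for $t<T/4$.

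The missing idea is the one structural feature of \ref{KSP} you never invoke: $f^T$ depends only on $x$, hence $\nabla_v\log\rho^T_0=\nabla_v\log g^T_0$. The paper (Theorem~\ref{exp marginal entropies}) uses this together with the log-Sobolev inequality for the Gaussian $\frm_V$ to get
\[
\cH(\mu^T_0|\frm)-\cH(\mu|\frm_X)\leq\tfrac12\int_{\R^{2d}}|\nabla_v\log\rho^T_0|^2\rho^T_0\,\De\frm=\tfrac12\int_{\R^{2d}}|\nabla_v\log g^T_0|^2\rho^T_0\,\De\frm\lesssim\psi^T(0),
\]
and $\psi^T(0)\lesssim\delta^{-3}e^{-2\kappa T}\cC_T(\mu,\nu)$ is already given by the corrector estimate (Corollary~\ref{prop:T/2}) at $s=0$---so contrary to your closing remark, the endpoint bound you need \emph{is} available; what is unavailable at $s=0$ is $\varphi^T(0)$, not $\psi^T(0)$. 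With this single replacement your argument goes through and matches the paper's proof.
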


A similar statement holds true for \ref{KFSP} replacing Assumption \ref{H3} with \ref{FH3} and with initial condition in \eqref{eq: short time langevin} given by $\bar\mu$.

\section{Preliminaries}\label{sec:2}

In this section we collect useful results about the Markov semigroup associated to the kinetic Fokker-Planck equation.
In what follows we write $\lesssim$ to indicate  that an inequality holds up to a multiplicative positive constant depending possibly on the dimension $d$, the bounds on the spectrum of $\nabla^2 U$, $\alpha$ and $\beta$, or the friction parameter $\gamma$.  

\subsection{On the assumptions} In this short section we report some straightforward consequences of the various assumptions listed at Section \ref{sec: assumptions} that we shall repeatedly use from now on. We begin by observing that assumption \ref{H1} guarantees that $\frm\in\cP_2(\R^{2d})$ and that $\frm_X$ satisfies Talagrand's inequality because of \cite[Corollary 9.3.2]{bakry2013analysis}, i.e. for any $q\in\cP(\R^{d})$
 \begin{equation}\label{talagrandx}\cW_2(q,\,\frm_X)^2\lesssim\cH(q|\frm_X)\,.\end{equation}
 Since the Talagrand inequality holds also for the Gaussian measure $\frm_V$, from \cite[Proposition 9.2.4]{bakry2013analysis}  it follows that for any $q\in\cP(\R^{2d})$
  \begin{equation}\label{talagrand}\cW_2(q,\,\frm)^2\lesssim\cH(q|\frm)\,.
  \end{equation}
 Let us also point out that~\ref{H4} implies~\ref{H3} and that under \ref{H1} and \ref{H3} it easily follows that $\mu,\,\nu\in\cP_2(\R^d)$. Indeed,
 \begin{equation}\label{secondmomentbound}\int_{\RD} \abs{x}^2\De\mu\lesssim \int_{\RD}\abs{x}^2\De\frm_X+\cW_2(\mu,\frm_X)^2\overset{\eqref{talagrandx}}{\lesssim}\int_{\RD}\abs{x}^2\De\frm_X+\cH(\mu|\frm_X)<+\oo\,,\end{equation}
 and similarly for the measure $\nu$. We also remark that \ref{FH4} implies \ref{FH3}.  Moreover, from \ref{H1} and \ref{FH3}, by means of \eqref{talagrand}, it follows that $\bar\mu,\,\bar\nu\in\cP_2(\R^{2d})$.

Finally, let us also notice that \ref{H1} and \ref{H2} guarantee  the validity of a log-Sobolev inequality for $\frm_X$ because of \cite[Corollary 5.7.2]{bakry2013analysis}, and by means of \cite[Proposition 5.2.7 and Proposition 5.5.1]{bakry2013analysis} it follows that $\frm$ satisfies a log-Sobolev inequality. Therefore for any $q\ll\frm$ it holds
\begin{equation}\label{LS}
\cH(q|\frm)\lesssim \cI(q)\,.
\end{equation}

\subsection{Markov semigroups and heat kernel} The generator $L$ associated to the SDE~\eqref{langevin} is given by
\begin{equation*}
  L=\gamma\Delta_v-\gamma v\cdot\nabla_v -\nabla U\cdot\nabla_v+v\cdot\nabla_x
\end{equation*}
 while its adjoint in $L^2(\frm)$ reads as
 \begin{equation*}
  L^*=\gamma\Delta_v-\gamma v\cdot\nabla_v +\nabla U\cdot\nabla_v -v\cdot\nabla_x\,.
 \end{equation*}
 Under assumption~\ref{H1}, it is well known that H\"ormander's Theorem for parabolic hypoellipticity applies~\cite[Theorem 1.1]{hormander1967hypoelliptic} to the operator $L$ , and thus the associated semigroup $(P_t)_{t\geq0}$ admits a probability kernel $p_t((x,y),(y,w))$, which is $C^\infty$ in all of the parameters, with respect to the invariant probability measure \begin{equation*}
     \De \frm (x,v)=  \frac{1}{Z}  e^{-U(x) - \frac{|v|^2}{2}}\,\De x \De v,
 \end{equation*}
 where $Z$ is a normalising constant. Sometimes, with a slight abuse of notation we will write $\frm(x,v)$ to denote the density of $\frm$ with respect to the Lebesgue measure. Similarly, we will denote by $(P_t^*)_{t\geq0}$ the semigroup associated to $L^*$. Note that the function $p_t$ also represents the density of $\rmR_{0,t}$ (the joint law at time $0$ and $t$ of the solution to~\eqref{langevin}) with respect to $\De \frm \otimes \frm$. Moreover, according to~\cite[Theorem 1.1]{DelarueMenozzi}, $p_t(\cdot,\cdot)$ satisfies two-sided Gaussian estimates.
 Importantly, $p_t$ is locally bounded away from zero and infinity, but with constants that might depend non-trivially on the time horizon $T$.

For some of our proofs, we need lower bounds that are uniform in $T$. To this aim, we have the following consequence of the results of \cite{DelarueMenozzi}, whose proof is postponed to the appendix. 
\begin{lemma}\label{lemmaappendicebound}
  Let $T_0>0$ be fixed. Under assumption~\ref{H1}, there exists a constant $c_{T_0}>0$ such that for all $T\geq T_0$ and all $(x,v),\,(y,w)\in\R^{2d}$
  \begin{equation}\label{boundlogappendice}\log p_T\left((x,v),\,(y,w)\right)\geq -c_{T_0}\Bigl(1+\abs{x}^2+\abs{v}^2+\abs{y}^2+\abs{w}^2\Bigr)\,.\end{equation}
\end{lemma}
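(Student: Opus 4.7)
The plan is to reduce the uniform-in-$T$ lower bound to the finite-time Aronson-type Gaussian lower bound of Delarue--Menozzi by means of a Chapman--Kolmogorov decomposition, with uniformity being closed by a compactness argument combined with ergodicity. First I would fix some $s \in (0, T_0/2)$ and apply the Delarue--Menozzi lower bound (the same estimate quoted above the lemma) to the Lebesgue density $q_s$ of the Langevin transition kernel: this yields a Gaussian-type estimate $q_s((x,v),(y,w)) \geq \kappa_s \exp(-\Lambda_s\, \Phi_s((x,v),(y,w)))$ for some constants $\kappa_s, \Lambda_s > 0$ and an explicit function $\Phi_s$ (a kinetic cost tied to the deterministic Langevin flow). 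Thanks to the at-most-linear growth of $\nabla U$ guaranteed by \ref{H1}, a direct computation shows $\Phi_s((x,v),(y,w)) \leq C_s(1 + \abs{x}^2 + \abs{v}^2 + \abs{y}^2 + \abs{w}^2)$. Passing to the density against $\frm \otimes \frm$ via $p_s = q_s/\frm(y,w)$ and noting that the extra term $U(y) + \abs{w}^2/2$ is non-negative (after shifting $U$ by a constant), one obtains a bound of the claimed form at $t = s$, with constants depending only on $s$, hence only on $T_0$.

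For general $T \geq T_0$, I would then exploit the Chapman--Kolmogorov identity at times $s$ and $T-s$ to write
\[
p_T\bigl((x,v),(y,w)\bigr) \geq \int_K\int_K p_s\bigl((x,v),\xi_1\bigr)\, p_{T-2s}(\xi_1,\xi_2)\, p_s\bigl(\xi_2,(y,w)\bigr)\, \De\frm(\xi_1)\,\De\frm(\xi_2),
\]
where $K \subset \R^{2d}$ is a fixed compact set with $\frm(K) > 0$. The two outer factors inherit the bound of the first step; since $\xi_1, \xi_2 \in K$, the quadratic contribution in those variables is absorbed into constants, leaving a product of the form $\exp(-a(1+\abs{x}^2+\abs{v}^2))\cdot\exp(-a(1+\abs{y}^2+\abs{w}^2))$ for some $a > 0$ depending only on $T_0$. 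What remains is therefore to bound the middle factor $p_{T-2s}(\xi_1,\xi_2)$ from below by a positive constant uniformly in $\xi_1,\xi_2 \in K$ and in $T \geq T_0$ (note $T-2s \geq T_0/3 > 0$).

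This uniform positivity on $K \times K$ is the main obstacle. For $T - 2s$ in any fixed bounded range $[T_0/3, T_1]$ it follows from the continuity and strict positivity of the kernel, both being consequences of H\"ormander hypoellipticity together with the Delarue--Menozzi bound on bounded time windows. For $T - 2s \geq T_1$ one must invoke the ergodicity of the Langevin dynamics: under \ref{H1} one has $p_t(\xi_1,\xi_2) \to 1$ as $t \to \infty$, and by choosing $T_1$ large enough (depending on $K$) one obtains $p_t \geq 1/2$ on $K \times K$ for all $t \geq T_1$. The delicate technical point here is upgrading the convergence from the integral mode naturally delivered by hypocoercivity (for instance $L^2(\frm)$-convergence via the log-Sobolev inequality \eqref{LS}) to a pointwise lower bound on compacts; this is done by inserting one further short Chapman--Kolmogorov step which exploits the boundedness and continuity of the kernel on compact sets, so that $L^2$- or $L^1$-closeness to the constant $1$ transfers into pointwise closeness. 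Assembling these pieces in the displayed Chapman--Kolmogorov formula and taking logarithms then produces the desired bound, with $c_{T_0} := 2a$ after the remaining multiplicative constants are absorbed into the additive $1$.
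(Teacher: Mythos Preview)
Your approach is correct in outline and would work, but it is more laborious than the paper's argument, and the two diverge precisely at what you call ``the main obstacle.'' You restrict the Chapman--Kolmogorov integral to a compact $K\times K$ and then have to argue that $\inf_{t\ge T_0/3}\inf_{K\times K} p_t>0$, which forces you into an ergodicity argument and the somewhat delicate upgrade from $L^1$/$L^2$ convergence to pointwise lower bounds via an extra smoothing step.

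The paper sidesteps this entirely by applying Jensen's inequality rather than restricting to a compact. Writing $p_T=\int p_{T-T_0/2}\,p_{T_0/2}\,\De\frm$ and using that $\frm$ is a probability measure gives
\[
\log p_T((x,v),(y,w))\ge \int \log p_{T-T_0/2}((x,v),\cdot)\,\De\frm+\int \log p_{T_0/2}(\cdot,(y,w))\,\De\frm.
\]
The second integral is a fixed-time quantity handled directly by Delarue--Menozzi plus the linear growth of the deterministic flow. For the first integral the paper does one more Chapman--Kolmogorov split $p_{T-T_0/2}=\int p_{T_0/2}\,p_{T-T_0}\,\De\frm$ and applies Jensen again, this time with respect to the probability measure $p_{T-T_0}((q,r),(z,u))\,\De\frm(q,r)$ (that this is a probability measure in $(q,r)$ follows from the physical reversibility \eqref{physrever}). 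After Fubini the factor $\int p_{T-T_0}(\cdot,(z,u))\,\De\frm(z,u)=1$ integrates out, and one is left with $\int\log p_{T_0/2}((x,v),\cdot)\,\De\frm$, again a fixed-time quantity. So the long-time piece vanishes identically and no ergodicity or compactness argument is needed; the only ingredients are Jensen, physical reversibility, and the Delarue--Menozzi bound at the single fixed time $T_0/2$. Your route is heavier but has the minor advantage of not using \eqref{physrever}.
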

 We stress that the Langevin dynamics~\eqref{langevin} are not reversible, and in particular the probability kernel $p_t$ is not symmetric. However, it is symmetric up to a sign-flip in the velocities,
 \begin{equation}\label{physrever}
 p_t \bigl((x,v),(y,w)\bigr)=p_t\bigl((y,-w),(x,-v)\bigr)\quad\forall t \geq 0,\quad\forall (x,v),\,(y,w)\in\R^{2d}\,.
 \end{equation}
 As we said above, this useful property is sometimes called physical reversibility.
 
\subsection{Contraction of the semigroup}
In our setup, due to the lack of a curvature condition, the standard Bakry-Emery machinery does not apply to obtain a commutation estimate for the semigroup of the type
\begin{equation}\label{eq:commestimates}
     \abs{\nabla P_t h(z)}\leq e^{-c \,t}\,P_t\big(\abs{\nabla h}\big)(z)\,,
\end{equation}
for some $c>0$.
It is still possible to obtain a commutation estimate similar to~\eqref{eq:commestimates} by replacing the Euclidean norm $|\cdot|$ by a certain twisted norm $\abs{\xi}_M\coloneqq\sqrt{\xi\cdot M\xi}$ on $\R^{2d}$ for some  well chosen positive definite symmetric matrix $M\in\R^{2d\times 2d}$. This is a common idea in the kinetic setting and it is exploited for example in~\cite{Baudoin, guillin2021kinetic, monmarche2020almost}. 

For instance, in Theorem 1 of~\cite{monmarche2020almost} the author studies the contraction properties of the semigroup $P_t$ associated to the SDE on $\R^m$
\begin{equation}\label{genericSDE}
 \De Z_t=b(Z_t)\De t +\Sigma\De B_t\,,
\end{equation}
with the drift $b:\R^m\to\R^m$ being globally Lipschitz and $\Sigma$ a constant positive-semidefinite symmetric matrix. The author shows that the condition on the Jacobian matrix $J_b$ of the drift 
\begin{equation}\label{eq:contR}
    \xi\cdot(MJ_b(z))\xi\leq -\kappa\, \xi\cdot M\xi=-\kappa\,\abs{\xi}_M^2\qquad \forall\xi \in \R^m,\, \forall z\in\R^m\,,
\end{equation}
where $\kappa\in \R$ and $M$ is a positive definite symmetric matrix, is equivalent to the commutation estimate
\begin{equation*}
    \abs{\nabla P_t h(z)}_{M^{-1}}\leq e^{-\kappa\,t}\,P_t\big(\abs{\nabla h}_{M^{-1}}\big)(z)\,.
\end{equation*}
Our setup, which is also discussed in~\cite[Section 3.3]{monmarche2020almost},  corresponds to the choice $m = 2d$, and
\begin{equation*}
 b(x,v)=\begin{pmatrix}
         v\\
         -\nabla U(x)-\gamma v
        \end{pmatrix}\,\qquad\Sigma=\begin{pmatrix}
         0 &0\\
         0& \sqrt{2\gamma}\Id
        \end{pmatrix}
\end{equation*}
and therefore the Jacobian reads as
\begin{equation*}
 J_b(x,v)=\begin{pmatrix}
  0  &\Id\\
  -\nabla^2 U(x) &-\gamma\Id
 \end{pmatrix}\,.
\end{equation*}
In~\cite[Proposition 5]{monmarche2020almost}, the author shows that~\eqref{eq:contR} holds with $\kappa>0$ as long as $\alpha$ and $\beta$ from assumption~\ref{H2} are close enough. By exploiting the symmetry of the heat kernel up to a sign flip, we obtain a similar commutation estimate also for the reversed dynamics.

In view of the above discussion we have the following. 
\begin{prop}\label{prop:contraction} Assume that~\ref{H1} and~\ref{H2} hold. Then, there exist a constant $\kappa >0$ and positive definite symmetric matrices $M,N \in \R^{2d\times 2d}$  such that~\eqref{eq:contR} holds and
\begin{enumerate}[label=(\roman*)]
    \item  For all $h\in C^1_c(\R^{2d})$, $t\geq0$ and $z\in\R^{2d}$
\begin{equation}\label{contrsem}
 \abs{\nabla P_t h(z)}_{M^{-1}}\leq e^{-\kappa\,t}\,P_t\big(\abs{\nabla h}_{M^{-1}}\big)(z)\,.
\end{equation}
    \item  For all $h\in C^1_c(\R^{2d})$, $t\geq0$ and $z\in\R^{2d}$
\begin{equation}\label{contrsemadj}
 \abs{\nabla P^\ast_t h(z)}_{N^{-1}}\leq e^{-\kappa\,t}\,P^\ast_t\big(\abs{\nabla h}_{N^{-1}}\big)(z)\,.
\end{equation}
\end{enumerate}
\end{prop}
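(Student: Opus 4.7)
The plan is to first construct the matrix $M$ explicitly so that the pointwise contraction~\eqref{eq:contR} holds, then invoke the equivalence established in~\cite[Theorem~1]{monmarche2020almost} to obtain the commutation bound~(i), and finally derive (ii) from (i) through the physical reversibility~\eqref{physrever}.

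To build $M$ I would mimic the strategy of~\cite[Theorem~2.12]{Baudoin}, where the case $\gamma=1$ is treated, and make the block ansatz
\[
M = \begin{pmatrix} a\,\Id_d & b\,\Id_d \\ b\,\Id_d & c\,\Id_d \end{pmatrix},\qquad a,b,c>0,\ \ ac>b^{2},
\]
so that $M$ is symmetric positive definite. With $J_b(x,v)$ as written in the excerpt, the pointwise inequality $MJ_b(x,v)+J_b(x,v)^{\top}M+2\kappa M\leq 0$ reduces, after inserting $\alpha\Id_d\leq\nabla^2 U(x)\leq\beta\Id_d$, to an algebraic system in $(a,b,c,\kappa)$ whose coefficients depend only on $\alpha,\beta,\gamma$. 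This algebra is the only genuine obstacle in the proof: one has to optimise the free parameters carefully to recover the sharp threshold $\sqrt{\beta}-\sqrt{\alpha}\leq\gamma$ instead of a cruder version (such as the one in~\cite[Proposition~5]{monmarche2020almost} which only asks $\alpha,\beta$ to be close enough), and the resulting rate $\kappa$ is precisely the one quoted in~\cite{bolley2010trend,monmarche2020almost}. The $x$-dependence of $\nabla^2U$ causes no difficulty since only its extreme eigenvalues enter the computation and these are uniformly controlled by~\ref{H2}. Item (i) then follows directly from \cite[Theorem~1]{monmarche2020almost}, applied to~\eqref{langevin} viewed as an instance of~\eqref{genericSDE} with $m=2d$.

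For (ii) I would exploit~\eqref{physrever}. Let $\sigma(x,v)\coloneqq(x,-v)$ and let $S=S^{\top}=S^{-1}$ denote its Jacobian. A direct computation with transition densities, using~\eqref{physrever} together with the invariance of $\frm$ under $\sigma$, yields the operator identity $P_t^{\ast}h=\bigl(P_t(h\circ\sigma)\bigr)\circ\sigma$ on smooth test functions. Setting $N\coloneqq S\,M\,S$, which is symmetric positive definite with the same spectrum as $M$ since $S$ is orthogonal, the chain rule gives the two identities $|\nabla P_t^{\ast}h(z)|_{N^{-1}}=|\nabla P_t(h\circ\sigma)(\sigma z)|_{M^{-1}}$ and $|\nabla(h\circ\sigma)|_{M^{-1}}=|\nabla h|_{N^{-1}}\circ\sigma$. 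Applying (i) to $h\circ\sigma$ at the point $\sigma z$ and then pulling $\sigma$ back through $P_t$ via the same operator identity produces exactly $e^{-\kappa t}P_t^{\ast}(|\nabla h|_{N^{-1}})(z)$ on the right-hand side, which is (ii).
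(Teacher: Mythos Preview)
Your proposal is correct and follows essentially the same route as the paper: the block ansatz for the twisted metric (the paper parametrises $Q=M^{-1}$ rather than $M$, which is equivalent since the block form is preserved under inversion), the appeal to \cite[Theorem~1]{monmarche2020almost} for (i), and the derivation of (ii) from (i) via the velocity-flip identity $P_t^{\ast}(h\circ\sigma)=(P_th)\circ\sigma$ with $N=SMS$ are exactly the ingredients the paper uses. The only substantive work you correctly flag---optimising the ansatz parameters to hit the sharp threshold $\sqrt\beta-\sqrt\alpha\leq\gamma$---is carried out in the paper's appendix along the lines you sketch.
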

\begin{proof} A proof of~\eqref{eq:contR}  with $\kappa > 0$ under~\ref{H1} and~\ref{H2} can be obtained by mimicking the computations in Theorem 2.12 of~\cite{Baudoin}, where the case $\gamma=1$ is discussed. Given~\eqref{eq:contR}, (i) follows from Theorem~1 in~\cite{monmarche2020almost}.

We now derive (ii) from (i) with the help of~\eqref{physrever}. For any function $f$ on $\R^{2d}$ define the transformation $\cS f(x,v) = f(x,-v)$ and set 
\begin{equation*}
N= \begin{pmatrix}
                                \Id &0\\
                                0 &-\Id                                                                        \end{pmatrix} M \begin{pmatrix}
                                \Id &0\\
                                0 &-\Id                                                                        \end{pmatrix}\,.
\end{equation*}
Note that $\cS^2 = \Id$, moreover in view of~\eqref{physrever}, for all $h\in C_c^1(\R^{2d})$, $P_t^\ast (\cS h) = \cS(P_t h)$ and $\cS|\nabla h|_{M^{-1}} = |\nabla(\cS h)|_{N^{-1}}$. It is then immediate to derive
\begin{equation*}
    \abs{\nabla P^\ast_t h}_{N^{-1}} = \cS \abs{\nabla P_t (\cS h)}_{M^{-1}} \leq e^{-\kappa\,t}\,\cS \Big(P_t\big(\abs{\nabla (\cS h)}_{M^{-1}}\big)\Big) = e^{-\kappa\,t}\,P^\ast_t\big(\abs{\nabla h}_{N^{-1}}\big)\,,
\end{equation*}
which is the desired conclusion.
\end{proof}

As a result of Proposition~\ref{prop:contraction} and Theorem 1 in~\cite{monmarche2020almost} we have the equivalent statements, with $M,N$ and $\kappa>0$ as above, and all $q_1,q_2\in \cP(\bbR^{2d})$, 
\begin{equation}\label{BEcontW}
\cW_{M,2}(q_1 P_t, q_2 P_t)\leq e^{-\kappa\,t}\,\cW_{M,2}(q_1, q_2)\,,
\end{equation}
\begin{equation}\label{BEcontstarW}
\cW_{N,2}(q_1 P^\ast_t, q_2 P^\ast_t)\leq e^{-\kappa\,t} \cW_{N,2}(q_1, q_2)\,,
\end{equation}
where $\cW_{M,2}(q_1,q_2)$ is the $\cW_2$-Wasserstein distance on $\cP(\R^{2d})$ with the Euclidean metric replaced by $d_M(x,y) = |x-y|_M$ and similarly for $\cW_{N,2}(q_1,q_2)$.
\section{Proof of the main results}\label{sec: proof main res}

\subsection{Duality and \texorpdfstring{$fg$}{fg}-decomposition for KSP}\label{fg:KSP} 
\begin{proof}[Proof of Proposition~\ref{fg lemma}] We only sketch the proof as it is rather standard. We consider the measure  $\rmR_{0,T}^X\coloneqq (\mathrm{proj}_{x_1},\mathrm{proj}_{x_2})_{\#}\rmR_{0,T}=(X_0,X_T)_{\#}\rmR$ and the minimisation problem,
\begin{equation}\label{Xproblem}
 \min_{q\in\Pi(\mu.\nu)} \cH\left(q|\rmR_{0,T}^X\right)\,,
\end{equation}
where $\Pi(\mu.\nu)$ is the set of couplings of $\mu, \nu \in \cP(\bbR^d\times\bbR^d)$.
In view of the heat kernel lower bound in Lemma~\ref{lemmaappendicebound}, we know that for some $C>0$ and uniformly in $x,y$ it holds
\begin{equation*}
  \frac{\De \rmR_{0,T}^X}{\De(\frm_X\otimes\frm_X)}(x,y)\geq \frac{1}{C} e^{-C\left(1+\abs{x}^2+\abs{y}^2\right)}\,.
\end{equation*}
which in combination with~\ref{H3} implies that $\cH(\mu\otimes\nu|\rmR_{0,T}^X)<\oo$. Indeed the bound above implies that for any $T>T_0$, $T_0$ fixed, there is a constant $C_{d,\alpha,\beta,\gamma,T_0}>0$ such that
 \begin{equation}\label{bound scemo 2}
  \begin{aligned}
  \cH(\mu\otimes\nu|\rmR_{0,T}^X) =&\,\cH(\mu\otimes\nu|\frm_X\otimes\frm_X)-\int_{\R^{4d}}\log  \frac{\De \rmR_{0,T}^X}{\De(\frm_X\otimes\frm_X)}\,\De\mu\otimes\nu\\
   \overset{\eqref{secondmomentbound}}{\leq}&\,C_{d,\alpha,\beta,\gamma,T_0}\big[1+\cH(\mu|\frm_X)+\cH(\nu|\frm_X)\big]\,.
  \end{aligned}
 \end{equation}
Thus, Proposition 2.5 in~\cite{LeoSch} applies and the above minimisation problem has indeed a unique solution $\pi \in \Pi(\mu,\,\nu)$. By applying~\cite[Proposition 2.1]{GigTam21}, there exist two non-negative measurable functions $f^T,\,g^T$ on $\RD$ such that
\begin{equation}\label{eq:radon pi}
    \frac{\De \pi}{\De \rmR_{0,T}^X} (x,\,y) = f^T(x) g^T(y),\qquad \rmR_{0,T}^X\text{-a.s.,}
\end{equation}
from which~\eqref{SSp} directly follows.
Now, in view of the additive property of the relative entropy, we get for any $\rmP\in \cP(\Omega)$
\begin{equation*}
 \cH(\rmP|\rmR)=\cH\left(\rmP_{0,T}^X|\rmR_{0,T}^X\right)+\int_{\R^{2d}} \cH\left(\rmP^{x,y}|\rmR^{x,y}\right)\De\rmP^X_{0,T}(x,y),
\end{equation*} with $\rmR^{x,y} = \rmR(\,\cdot\,|X_0 = x, X_T = y)$ and similarly for $\rmP^{x,y}$.
Therefore, a minimizer to~\ref{KSPd} can be found by defining
\begin{equation*}
    \rmP^T(\cdot) = \int_{\R^{d}\times \R^{d}} \rmR(\,\cdot\,|X_0 = x, X_T = y)\, \De \pi(x,\,y),
\end{equation*}
which satisfies $(X_0,X_T)_{\#}\rmP^T = \pi$ and $\cC_T(\mu,\nu) = \cH(\rmP^T|\rmR) = \cH(\pi|\rmR^X_{0,T})<\infty$. In particular, in view of~\eqref{bound scemo 2}, for all $T>T_0$, $T_0$ fixed, there is $C_{d,\alpha,\beta,\gamma,T_0}>0$ such that 
\begin{equation}\label{boundscemocosto}
      \cC_T(\mu,\nu) \leq C_{d,\alpha,\beta,\gamma,T_0}\big[1+\cH(\mu|\frm_X)+\cH(\nu|\frm_X)\big]\,.
\end{equation}
Similarly, for any $q\in \Pi_X(\mu,\nu)$, denoting $q^X = (\mathrm{proj}_{x_1},\mathrm{proj}_{x_2})_\#q$, we have $\cH(q|\rmR_{0,T})\geq \cH(q^X|\rmR^X_{0,T}) \geq \cH(\pi|\rmR^X_{0,T})$ with equality if and only if $q = \mu^T$ where
\begin{equation}\label{Xrel}
    \mu^T(\cdot) = \int_{\R^d\times \R^d}\rmR_{0,T}(\,\cdot\,|X_0 = x, X_T = y)\, \De \pi(x,\,y).
\end{equation}
By construction $\mu^T= ((X_0,V_0),(X_T,V_T))_{\#}\rmP^T$ and $\cH(\rmP^T|\rmR)=\cH(\mu^{T}|\rmR_{0,T})= \cH(\pi|\rmR^X_{0,T})<\infty$.
The solutions are unique by strict convexity of the entropy and the linearity of the constraint. Equation~\eqref{Xrel} implies equality of the conditional distributions of $\mu^T$ and $R_{0,T}$ given the space variables. But then, 
\begin{equation*}
  \frac{\De \mu^{T}}{\De \rmR_{0,T}}(x,v,y,w) = \frac{\De (\mathrm{proj}_{x_1},\mathrm{proj}_{x_2})_{\#}\mu^{T}}{\De (\mathrm{proj}_{x_1},\mathrm{proj}_{x_2})_{\#}\rmR_{0,T}}(x,y) =  \frac{\De \pi}{\De \rmR^X_{0,T}}(x,y)=f^T(x)g^T(y),\qquad \rmR_{0,T}\text{-a.s.}
\end{equation*}
\end{proof}

\begin{proof}[Proof of Proposition \ref{lemma:duality}]
We have already seen in the previous proof that $\cC_T(\mu,\nu)$ is finite. Now, since \ref{KSP} is equivalent to the minimisation problem~\eqref{Xproblem}, from \cite[Proposition 6.1]{LeoMinEner} it follows
\begin{equation*}\begin{aligned}
    \cC_T(\mu,\nu)=&\,\sup_{\varphi,\psi\in C_b(\R^d)}\biggl\{\int_{\RD}\varphi\,\De\mu+\int_{\RD}\psi\,\De\nu-\int_{\R^{2d}} \left(e^{\varphi\oplus\psi}-1\right)\De\rmR_{0,T}^X\biggr\}\\\leq&\sup_{\varphi,\psi\in C_b(\R^d)}\biggl\{\int_{\RD}\bigl(\varphi\oplus\psi\bigl)\,\De\pi-\log\int_{\R^{2d}}e^{\varphi\oplus\psi}\,\De\rmR_{0,T}^X\biggr\}\\
    \leq&\, \sup_{h\in C_b(\R^{2d})}\biggl\{\int_{\R^{2d}}h\, \De\pi-\log\int_{\R^{2d}}e^{h}\,\De\rmR_{0,T}^X\biggr\}\overset{(\dagger)}{=}\cH\bigl(\pi|\rmR_{0,T}^X\bigr)=\cC_T(\mu,\nu)\,,
\end{aligned}\end{equation*}
where $\pi$ is the unique optimizer in \eqref{Xproblem}, while $(\dagger)$ is the Donsker-Varadhan variational formula \cite[Lemma 1.4.3a]{DupuisEllis}. This concludes the proof since $\rmR_{0,T}^X\coloneqq (\mathrm{proj}_{x_1},\mathrm{proj}_{x_2})_{\#}\rmR_{0,T}$.
\end{proof}

The Schr\"odinger system~\eqref{SSp} is particularly useful when $f^T$ and $g^T$ are regular enough. Under~\ref{H1} and~\ref{H4} they inherit the regularity (smoothness and integrability) of the densities of $\mu$, $\nu$ respectively. This follows from the identities
\begin{equation*}
    \frac{\De \mu}{\De\frm_X} = f^T \int_{\R^d} P_T g^T \,\De\frm_V\,,\qquad\frac{\De \nu}{\De\frm_X} = g^T \int_{\R^d} P^\ast_T f^T \,\De\frm_V \,,
\end{equation*}
and since $P^\ast_T f^T$ and $P_T g^T$ are smooth and positive (as a result of the lower bound~\eqref{boundlogappendice}).
Moreover, arguing exactly as in Lemma 2.1 in~\cite{conforti2021formula}, owing to the lower bound in~\eqref{boundlogappendice}, and the continuity of $p_T$, we have that there is $c_{T_0} > 0$, (possibly depending on $\mu$ and $\nu$)  such that for all $T\geq T_0$
\begin{equation*}
\|f^T\|_{L^\infty(\frm)} \|g^T\|_{L^1(\frm)} \leq c_{T_0} \Big\|\frac{\De \mu}{\De \frm_X}\Big\|_{L^\infty(\frm)},\quad \|f^T\|_{L^1(\frm)} \|g^T\|_{L^\infty(\frm)} \leq c_{T_0} \Big\|\frac{\De \nu}{\De \frm_X}\Big\|_{L^\infty(\frm)}.
\end{equation*}
These bounds are pivotal to prove that $f^T\to \De \mu/\De \frm_X$ and $g^T\to \De \nu/\De \frm_X$ as $T\to\infty$ in $L^p(\frm)$ for all $p\in [1,\infty)$ akin to what is done in Lemma 3.6 of~\cite{conforti2021formula}.

To ensure that $f^T$, $g^T$ are in $L^\infty(\frm)$  and with compact support, we work under assumption~\ref{H1} and~\ref{H4} for the rest of the section.
With the help of the forward and adjoint semigroup, and~\eqref{fg} we can write
\begin{equation}\label{SS}
 \begin{cases}
  \mu^T_0=f^T\,P_T g^T\,\frm\,,\\
  \mu^T_T=g^T\,P^*_T f^T\,\frm\,,
 \end{cases}
\end{equation}
where we recall that $\mu^T_t = (X_t,\,V_t)_\# \rmP^T$, with $\rmP^T$ being optimal for~\ref{KSPd}.
Furthermore, if we set,
\begin{equation*}
	f_t^T := P^\ast_t f^T\qquad\text{and}\qquad g_t^T := P_{T-t} g^T\,,
\end{equation*}
then $\mu_t^T$, $t\in [0,T]$, can be represented as
\begin{equation}\label{eq: fg dec semigroups}
	\De \mu_t^T = f_t^T g_t^T \De \frm\,.
\end{equation}
It is also immediate to check that it holds
\begin{equation}\label{pde fg}
 \begin{cases}
  \partial_t f^T_t=L^* f^T_t\\
  \partial_t g^T_t=-L g^T_t
 \end{cases}
\quad\text{and}\quad
\begin{cases}
  \partial_t \log f^T_t=L^*\log f^T_t+\Gamma(\log f^T_t)\\
  \partial_t\log g^T_t=-L \log g^T_t-\Gamma(\log g^T_t)\,,
\end{cases}
\end{equation}
where $\Gamma(h)=\gamma\abs{\nabla_v h}^2$ is the \emph{carr\'e du champ} operator associated to the generator $L$.

\medskip

The $fg$-decomposition gives us a nice representation formula for the relative entropy along the entropic interpolation $(\mu^T_t)_{t\in[0,T]}$. Indeed, if we introduce the functions
\begin{equation*}
 h_f^T(t)\coloneqq \int_{\R^{2d}}\log f^T_t\,\rho^T_t\,\De \frm\quad\text{and}\quad h_b^T(t)\coloneqq \int_{\R^{2d}}\log g^T_t\,\rho^T_t\,\De \frm\quad\forall t\in[0,T]\,,
\end{equation*}
then it easily follows that
\begin{equation}\label{entropia with h}
 \cH(\mu^T_t|\frm)=h_f^T(t)+h_b^T(t),\quad\forall t\in[0,T]\,.
\end{equation}
Moreover, we have 
\begin{equation}\label{derivata h}
 \partial_t h_f^T(t)=-\int_{\R^{2d}} \Gamma(\log f^T_t)\rho_t^T\,\De\frm\quad\text{and}\quad \partial_t h_b^T(t)=\int_{\R^{2d}} \Gamma(\log g^T_t)\rho_t^T\,\De\frm\,.
\end{equation}
For a proof of~\eqref{derivata h} we refer to Lemma 3.8 in \cite{conforti2019second} where the classical setting is studied, the only difference in the kinetic setting being that the operator that acts on $f^T_t$ should be replaced with $L^*$, since $L$ is not self-adjoint.

\medskip

In addition to \eqref{entropia with h}, the $fg$-decomposition gives the following representation for the kinetic entropic cost
\begin{equation}\label{costo con h}\begin{aligned}
 \cC_T(\mu,\nu)=&\cH(\mu^{T}|\rmR_{0,T})=\RXexpect{\rho^{T}\log \rho^{T}}
 \\=&\int_{\R^{2d}}\log f^T\,\rho^T_0\,\De\frm+\int_{\R^{2d}}\log g^T\,\rho^T_T\,\De\frm=h_f^T(0)+h_b^T(T)\,.
\end{aligned}\end{equation}
As a byproduct of~\eqref{entropia with h},~\eqref{derivata h}, and~\eqref{costo con h}, we get the identities\footnote{Let us point out that when considering the optimal solution $\rmP^T\in\cP(\Omega)$ in \eqref{eq: KSPd stoch control}, the optimal control is given by $\alpha^\rmP_t=2\gamma\,\nabla_v\log g_t^T(X_t)$ and the stochastic control formulation \eqref{eq: KSPd stoch control} reads as the first identity in \eqref{rappr costo}.}
\begin{equation}\label{rappr costo}
 \begin{aligned}\cC_T(\mu,\nu)=&\cH(\mu^T_0|\frm)+\int_0^T \int_{\R^{2d}}\Gamma(\log g^T_t)\rho^T_t\,\De\frm\,\De t\,,\\
  \cC_T(\mu,\nu)=&\cH(\mu^T_T|\frm)+\int_0^T \int_{\R^{2d}}\Gamma(\log f^T_t)\rho^T_t\,\De\frm\,\De t\,.
 \end{aligned}
\end{equation}
A straightforward consequence of the previous identities is the following 

\begin{lemma}\label{lemmarapprcosto}
 Under the assumptions \ref{H1}, \ref{H4}, for any $t\in[0,T]$ it holds
 \begin{equation}\label{identitycosthalftime}
  \begin{aligned}\cC_T(\mu,\nu)=\,\cH(\mu^T_0|\frm)+\cH(\mu^T_T|\frm)&\,+\int_0^{t} \int_{\R^{2d}}\Gamma(\log g^T_s)\rho^T_s\,\De\frm\,\De s\\
  &\,+\int_{t}^T \int_{\R^{2d}}\Gamma(\log f^T_s)\rho^T_s\,\De\frm\,\De s-\cH\left(\mu^T_{t}|\frm\right)\,.
\end{aligned} \end{equation}
\end{lemma}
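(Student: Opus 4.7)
The plan is to prove the identity by purely algebraic manipulation of the representation formulas \eqref{entropia with h}, \eqref{derivata h}, and \eqref{costo con h} already established in the excerpt. No new analytic ingredient is required.

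First, I would record the three entropy identities obtained from \eqref{entropia with h} at the times $0$, $t$, and $T$, namely
\[
\cH(\mu^T_0|\frm)=h_f^T(0)+h_b^T(0),\quad \cH(\mu^T_t|\frm)=h_f^T(t)+h_b^T(t),\quad \cH(\mu^T_T|\frm)=h_f^T(T)+h_b^T(T).
\]
Then, using \eqref{derivata h}, I would integrate $\partial_s h_b^T$ on $[0,t]$ and $\partial_s h_f^T$ on $[t,T]$ to get the two telescoping relations
\[
h_b^T(t)-h_b^T(0)=\int_0^{t}\!\!\int_{\R^{2d}}\Gamma(\log g^T_s)\,\rho^T_s\,\De\frm\,\De s,\qquad h_f^T(t)-h_f^T(T)=\int_{t}^{T}\!\!\int_{\R^{2d}}\Gamma(\log f^T_s)\,\rho^T_s\,\De\frm\,\De s.
\]
Adding these two equations and rearranging gives
\[
h_f^T(0)+h_b^T(T)=\cH(\mu^T_0|\frm)+\cH(\mu^T_T|\frm)-\cH(\mu^T_t|\frm)+\int_0^{t}\!\!\int_{\R^{2d}}\Gamma(\log g^T_s)\rho^T_s\,\De\frm\,\De s+\int_{t}^{T}\!\!\int_{\R^{2d}}\Gamma(\log f^T_s)\rho^T_s\,\De\frm\,\De s,
\]
after cancelling the $h_f^T(t)$ and $h_b^T(t)$ terms that appear once with each sign. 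Finally, invoking \eqref{costo con h}, which identifies the left-hand side as $\cC_T(\mu,\nu)$, yields exactly \eqref{identitycosthalftime}.

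There is essentially no obstacle here: the two global identities \eqref{rappr costo} can already be read as the limiting cases $t=T$ and $t=0$ of \eqref{identitycosthalftime}, and the present statement is simply the interpolated version obtained by splitting the time interval $[0,T]$ into $[0,t]\cup[t,T]$ and using the forward evolution of $h_b^T$ on the first piece together with the backward evolution of $h_f^T$ on the second. The only point requiring some care would be to make sure that $h_f^T$ and $h_b^T$ are of class $C^1$ on $[0,T]$ so that the fundamental theorem of calculus applies, but this is guaranteed under \ref{H1} and \ref{H4} by the smoothness and integrability properties of $f^T_s$ and $g^T_s$ recalled just before \eqref{pde fg}, and is implicit in the proof of \eqref{derivata h} itself.
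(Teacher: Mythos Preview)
Your proposal is correct and essentially the same as the paper's own argument: both are short algebraic manipulations of \eqref{entropia with h}, \eqref{derivata h}, and \eqref{costo con h}. The only cosmetic difference is that the paper starts from the first line of \eqref{rappr costo}, splits the $s$-integral at $t$, and then converts the piece over $[t,T]$ via $\partial_s h_b^T+\partial_s h_f^T=\partial_s\cH(\mu^T_s|\frm)$, whereas you integrate $\partial_s h_b^T$ on $[0,t]$ and $\partial_s h_f^T$ on $[t,T]$ separately and then combine with the three entropy identities; the resulting cancellations are identical.
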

\begin{proof}
 From~\eqref{rappr costo} we can write
 \[\cC_T(\mu,\nu)=\cH(\mu^T_0|\frm)+\int_0^{t} \int_{\R^{2d}}\Gamma(\log g^T_s)\rho^T_s\,\De\frm\,\De s+\int_{t}^T \int_{\R^{2d}}\Gamma(\log g^T_s)\rho^T_s\,\De\frm\,\De s\,.\]
Applying the identities~\eqref{derivata h} we obtain that the last summand equals
\begin{equation*}
\begin{aligned}
\int_{t}^T \int_{\R^{2d}}\Gamma(\log g^T_s)\rho^T_s\,\De\frm\,\De s=&\,\int_{t}^T \int_{\R^{2d}}\Gamma(\log f^T_s)\rho^T_s\,\De\frm\,\De s+\int_{t}^T \partial_s h_b^T(s)+\partial_s h_f^T(s)\,\De s\\
=&\, \int_{t}^T \int_{\R^{2d}}\Gamma(\log f^T_s)\rho^T_s\,\De\frm\,\De s+\int_{t}^T \partial_s \cH(\mu_s^T|\frm)\,\De s\\
=&\, \int_{t}^T \int_{\R^{2d}}\Gamma(\log f^T_s)\rho^T_s\,\De\frm\,\De s+\cH(\mu^T_T|\frm)-\cH\left(\mu_{t}^T|\frm\right)\,,
\end{aligned}
\end{equation*}
and we reach our conclusion.
\end{proof}

\subsection{Corrector estimates and proof of Theorem \ref{teo entropic turnpike}}\label{sec: corr est} Throughout  we assume~\ref{H1},~\ref{H2} and~\ref{H4} to be true and we will point out whenever the latter can be relaxed to \ref{H3}. Let us start by defining  a few key objects whose behaviour will help us in controlling the convergence rates for the turnpike property.

We define the \emph{correctors} as the functions $\varphi^T,\psi^T\colon [0,T]\to\R$ given by
 \begin{equation}\label{defcorrectors}
	\varphi^T(s) \coloneqq \int_{\R^{2d}}\abs{\nabla \log f_s^T}^2_{N^{-1}} \rho_s^T \,\De \frm \quad\mbox{and}\quad \psi^T(s) \coloneqq \int_{\R^{2d}} \abs{\nabla \log g_s^T}^2_{M^{-1}} \rho_s^T \,\De \frm\,,
\end{equation}
where $M,\,N \in \R^{2d\times 2d}$ are the matrices appearing in Proposition~\ref{prop:contraction}.
Let us also note that by the $fg$-decomposition it follows
$\cI(\mu^T_s)\lesssim \varphi^T(s)+\psi^T(s)$.

With the next lemma we show that the contraction properties introduced in the previous section translate into an exponentially fast contraction for $\varphi^T$ and $\psi^T$.

\begin{lemma}\label{eq:correctors} Under~\ref{H1},~\ref{H2} and~\ref{H4}, for any $0<t \leq s \leq T$ it holds
\begin{equation}\label{correctors contraction}
	\varphi^T(s) \leq \varphi^T(t) e^{-2\kappa (s-t)}\quad\mbox{and}\quad \psi^T(T-s) \leq \psi^T(T-t) e^{-2\kappa (s-t)}\,.
\end{equation}
\end{lemma}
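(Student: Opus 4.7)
The plan is to transport the twisted gradient at time $s$ back to time $t$ by combining the semigroup identities $f_s^T = P_{s-t}^\ast f_t^T$ and $g_{T-s}^T = P_{s-t}\, g_{T-t}^T$ (which follow immediately from $f_r^T = P_r^\ast f^T$, $g_r^T = P_{T-r} g^T$, and the semigroup property) with the commutation estimates \eqref{contrsem}--\eqref{contrsemadj}. The exponential factor $e^{-2\kappa(s-t)}$ will come from squaring these gradient bounds, and the estimate will then be closed by a pointwise Cauchy--Schwarz for the underlying Markov semigroup, followed by an $L^2(\frm)$-duality step that converts the remaining adjoint semigroup acting on $f$ into a forward semigroup acting on $g$ (and vice versa), thereby reconstructing $\varphi^T(t)$ exactly.

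Concretely, for the bound on $\varphi^T(s)$, I would first rewrite
\begin{equation*}
\varphi^T(s) = \int_{\R^{2d}} \frac{\abs{\nabla f_s^T}^2_{N^{-1}}}{f_s^T}\, g_s^T\, \De\frm,
\end{equation*}
apply \eqref{contrsemadj} to $P_{s-t}^\ast f_t^T = f_s^T$, square it, and then invoke the pointwise Cauchy--Schwarz inequality
\begin{equation*}
\bigl[P_{s-t}^\ast\bigl(\abs{\nabla f_t^T}_{N^{-1}}\bigr)\bigr]^2 \leq P_{s-t}^\ast(f_t^T) \cdot P_{s-t}^\ast\!\left(\frac{\abs{\nabla f_t^T}^2_{N^{-1}}}{f_t^T}\right) = f_s^T\, P_{s-t}^\ast\!\left(\frac{\abs{\nabla f_t^T}^2_{N^{-1}}}{f_t^T}\right),
\end{equation*}
so that the factor $f_s^T$ cancels the denominator inside the integral defining $\varphi^T(s)$. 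Passing $P_{s-t}^\ast$ across the pairing via $L^2(\frm)$-duality and recognising $P_{s-t} g_s^T = g_t^T$ then delivers exactly $\varphi^T(s) \leq e^{-2\kappa(s-t)}\varphi^T(t)$.

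The bound on $\psi^T(T-s)$ follows by the fully symmetric argument using the forward contraction \eqref{contrsem} applied to $P_{s-t} g_{T-t}^T = g_{T-s}^T$, a Cauchy--Schwarz with weight $g_{T-t}^T$, and the dual identity $P_{s-t}^\ast f_{T-s}^T = f_{T-t}^T$. The point I would scrutinise most carefully is the legitimacy of the pointwise Cauchy--Schwarz step for $P_{s-t}^\ast$: this is not automatic from $L^2(\frm)$-adjointness, but it does hold because the physical reversibility identity \eqref{physrever} implies $P_t^\ast = \cS\circ P_t\circ\cS$ (with $\cS$ the velocity-flip used in the proof of Proposition~\ref{prop:contraction}), so that $P_t^\ast$ is itself integration against a probability kernel. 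The logarithmic derivatives, ratios, and divisions used above are justified under \ref{H1} and \ref{H4}, since the discussion preceding \eqref{SS} guarantees that $f_t^T$ and $g_t^T$ are smooth and strictly positive for every $t\in(0,T)$.
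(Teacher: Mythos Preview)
Your proof is correct and follows essentially the same route as the paper: rewrite $\varphi^T(s)$ as $\int |\nabla f_s^T|^2_{N^{-1}}(f_s^T)^{-1}g_s^T\,\De\frm$, apply the commutation estimate \eqref{contrsemadj} to $f_s^T=P^\ast_{s-t}f_t^T$, use the pointwise Cauchy--Schwarz $[P^\ast_{s-t}(|\nabla f_t^T|_{N^{-1}})]^2\le P^\ast_{s-t}(f_t^T)\,P^\ast_{s-t}(|\nabla f_t^T|_{N^{-1}}^2/f_t^T)$, and then shift $P^\ast_{s-t}$ onto $g_s^T$ by $L^2(\frm)$-duality to recover $g_t^T$ and hence $\varphi^T(t)$. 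Your additional remark that the Cauchy--Schwarz step is legitimate because $P^\ast_t=\cS P_t\cS$ is a genuine Markov semigroup is a point the paper leaves implicit.
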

\begin{proof} By definition $f_s^T = P^*_{s-t} f_t^T$ and thus
\begin{equation}
	\varphi^T(s) = \int_{\R^{2d}} \abs{\nabla \log f_s^T}^2_{N^{-1}} \rho_s^T \,\De \frm = \int_{\R^{2d}} \abs{\nabla P^*_{s-t} f_t^T}^2_{N^{-1}} (P^*_{s-t} f_t^T)^{-1} g_s^T \,\De \frm
\end{equation}
An application of the gradient estimate \eqref{contrsemadj} and Cauchy-Schwartz inequality yields
\begin{equation}
	\begin{aligned}
		\varphi^T(s)&\overset{\eqref{contrsemadj}}{\leq}  e^{-2\kappa (s-t)}\int_{\R^{2d}} \left(P_{s-t}^*\abs{\nabla f_t^T}_{N^{-1}}\right)^2 (P_{s-t}^* f_t^T)^{-1} P_{T-s}g^T \,\De \frm \\
		&\leq  e^{-2\kappa (s-t)}\int_{\R^{2d}} P_{s-t}^*\bigg(\frac{\abs{\nabla f_t^T}^2_{N^{-1}}}{f_t^T}\bigg)  P_{T-s}g^T \,\De \frm\\
		& = e^{-2\kappa (s-t)}\int_{\R^{2d}} \abs{\nabla \log f_t^T}^2_{N^{-1}}  \rho^T_t \,\De \frm \leq e^{-2\kappa  (s-t)} \varphi^T(t),
	\end{aligned}
\end{equation}
which concludes the proof for the first inequality. The analogous inequality for $\psi^T$ runs as above by using inequality \eqref{contrsem} for the semigroup $(P_t)_{t\in[0,T]}$.
\end{proof}

\begin{prop}\label{gio:prop:T/2}
Grant~\ref{H1},~\ref{H2} and~\ref{H4}. There exists $C_{d,\alpha,\beta,\gamma}>0$ such that for any $0<\delta\leq 1$  and for any $t\in[\delta,\,T]$, as soon as $T>\tempo$, it holds
\begin{equation}\label{gio:halfcorrectors}
	\varphi^T(t) \lesssim \,e^{-2\kappa t}\,\left[\cI\left(\mu^T_\delta\right)+\cI\left(
	\mu^T_{T-\delta}\right)\right]\quad\mbox{and}\quad \psi^T(T-t)\lesssim \,e^{-2 \kappa t}\,\left[\cI\left(\mu^T_\delta\right)+\cI\left(
	\mu^T_{T-\delta}\right)\right]\,.
\end{equation}
\end{prop}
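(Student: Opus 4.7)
The plan is to combine the one-sided exponential dissipation of Lemma \ref{eq:correctors} with a pointwise bound relating the correctors to $\cI(\mu^T_\cdot)$, and then to close a self-referential inequality using the largeness of $T$.

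The first ingredient is a gradient splitting coming directly from the $fg$-decomposition: since $\nabla\log\rho^T_s = \nabla\log f^T_s + \nabla\log g^T_s$, the elementary bound $|a|^2 \leq 2|a+b|^2 + 2|b|^2$ together with the equivalence of $|\cdot|$, $|\cdot|_{M^{-1}}$ and $|\cdot|_{N^{-1}}$ on $\R^{2d}$ yields, for some $A = A_{d,\alpha,\beta,\gamma} > 0$ and every $s \in [0,T]$,
\[
\varphi^T(s) \leq A\big[\cI(\mu^T_s) + \psi^T(s)\big], \qquad \psi^T(s) \leq A\big[\cI(\mu^T_s) + \varphi^T(s)\big].
\]
The unavoidable appearance of $\psi^T$ (resp. $\varphi^T$) on the right is due to the fact that $|\nabla\log f|^2 + |\nabla\log g|^2 \neq |\nabla\log(fg)|^2$.

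Next, applying the contraction of Lemma \ref{eq:correctors} with the two endpoints $\delta$ and $T-\delta$,
\[
\psi^T(\delta) \leq e^{-2\kappa(T-2\delta)}\,\psi^T(T-\delta), \qquad \varphi^T(T-\delta) \leq e^{-2\kappa(T-2\delta)}\,\varphi^T(\delta).
\]
Chaining the previous bounds at $s=\delta$ and $s=T-\delta$ with these decays produces
\[
\varphi^T(\delta) \leq A\,\cI(\mu^T_\delta) + A^2 e^{-2\kappa(T-2\delta)}\cI(\mu^T_{T-\delta}) + A^2 e^{-4\kappa(T-2\delta)}\varphi^T(\delta).
\]
Choosing $C_{d,\alpha,\beta,\gamma}$ so that the threshold $T > \frac{1}{\kappa}\log C_{d,\alpha,\beta,\gamma} + 2\delta$ forces $A^2 e^{-4\kappa(T-2\delta)} \leq \tfrac{1}{2}$, I absorb the self-referential term and conclude $\varphi^T(\delta) \lesssim \cI(\mu^T_\delta) + \cI(\mu^T_{T-\delta})$. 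The symmetric computation gives $\psi^T(T-\delta) \lesssim \cI(\mu^T_\delta) + \cI(\mu^T_{T-\delta})$.

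To finish, I propagate these endpoint bounds to arbitrary $t\in[\delta,T]$ by a last invocation of Lemma \ref{eq:correctors}: $\varphi^T(t) \leq \varphi^T(\delta)\,e^{-2\kappa(t-\delta)}$, and since $\delta \leq 1$ the factor $e^{2\kappa\delta}$ is absorbed in the constant, delivering the claimed $e^{-2\kappa t}$ rate. The estimate on $\psi^T(T-t)$ follows analogously from the backward version of Lemma \ref{eq:correctors}. The main obstacle is precisely the closure of the self-referential inequality for $\varphi^T(\delta)$ and $\psi^T(T-\delta)$: the Fisher information $\cI(\mu^T_s)$ at a single time only controls the sum $\varphi^T(s)+\psi^T(s)$, and the crucial idea, inspired by the deterministic argument of \cite{trelat2015turnpike}, is to shuttle the unwanted corrector to the opposite endpoint where the exponential contraction makes it subdominant, thus permitting a bootstrap for $T$ beyond the announced threshold.
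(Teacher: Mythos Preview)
Your proof is correct and follows essentially the same strategy as the paper's: use the $fg$-decomposition to split $\varphi^T(\delta)$ in terms of $\cI(\mu^T_\delta)$ plus the ``wrong'' corrector $\psi^T(\delta)$, shuttle that term to the opposite endpoint via Lemma~\ref{eq:correctors}, split again, shuttle back, and absorb the resulting self-referential term for $T$ beyond the stated threshold. The paper organises the bookkeeping slightly differently---it bounds $\varphi^T(T-\delta)$ from above by the chain and simultaneously from below via $|a-b|^2\geq \tfrac12 a^2 - b^2$ to isolate $\psi^T(T-\delta)$---but your version, closing the loop with upper bounds only, is a clean variant of the same bootstrap.
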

\begin{proof}
Without loss of generalities we may assume $\cI\left(\mu^T_\delta\right)$ and $\cI\left(
	\mu^T_{T-\delta}\right)$ to be finite, otherwise the above bounds are trivial. From Lemma \ref{eq:correctors} and the $fg$-decomposition of $\rho_t^T = f_t^T g_t^T$ we know that 
  \begin{align*}
\varphi^T(T-\delta) \leq&\, e^{-2 \kappa\, T+4\kappa\delta}  \varphi^T(\delta)\\
      =&\, e^{-2 \kappa\, T+4\kappa\delta}\int \abs{\nabla \log \rho^T_{\delta}-\nabla \log  g^T_\delta}^2_{N^{-1}} \De \mu^T_{\delta} 
      \\\lesssim &\, e^{-2 \kappa\, T+4\kappa\delta}\cI\left(\mu^T_\delta\right) + e^{-2 \kappa\, T+4\kappa\delta} \psi^T(\delta)\\
      \lesssim & \,e^{-2 \kappa\, T+4\kappa\delta} \cI\left(\mu^T_\delta\right) +  e^{-4 \kappa\, T+8\kappa\delta} \psi^T(T-\delta)\,.
  \end{align*}
  Using the basic inequality $|a-b|^2\geq a^2/2-b^2$ we obtain
  \begin{equation*}
    \varphi^T(T-\delta)=
       \int_{\R^{2d}} \abs{\nabla \log  g^T_{T-\delta} - \nabla \log \rho^T_{T-\delta}}_{N^{-1}}^2\De\mu^T_{T-\delta} 
       \gtrsim  \psi^T(T-\delta)- 2\cI\left(
	\mu^T_{T-\delta}\right)\,.
  \end{equation*}
  As a result, we get
  \begin{equation*}
      \psi^T(T-\delta)- 2\cI\left(
	\mu^T_{T-\delta}\right)\lesssim  e^{-2 \kappa\, T+4\kappa\delta} \cI\left(\mu^T_\delta\right) +   e^{-4 \kappa\, T+8\kappa\delta} \psi^T(T-\delta)\,.
  \end{equation*}
  Therefore, as soon as $T>\frac{1}{\kappa}\log C_{d,\alpha,\beta,\gamma}+2\delta $  for some constant $C_{d,\alpha,\beta,\gamma}>0$, we find
   \begin{equation*}  \psi^T(T-\delta) \lesssim \cI\left(\mu^T_\delta\right)+\cI\left(
	\mu^T_{T-\delta}\right)\,.
   \end{equation*}
   Plugging this bound into the contraction estimate \eqref{correctors contraction} gives the second inequality in \eqref{gio:halfcorrectors} for any $t\in[\delta,T]$. The first inequality is obtained by exchanging the roles of $ \varphi$ and $\psi$ in the above discussion.
\end{proof}

\begin{prop}\label{prop: regularizing effect} Assume~\ref{H1},~\ref{H2} and~\ref{H3}. Let $0<\delta\leq 1$ be fixed. Then, for all $t\in [\delta, T-\delta]$
\begin{equation}\label{Ibound}
    \cI(\mu^T_t) \lesssim \delta^{-3} \Big(\cC_T(\mu,\nu) - \cH(\mu^T_t\mid \frm)\Big).
\end{equation}
\end{prop}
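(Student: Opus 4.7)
The plan is to combine the $fg$-decomposition of the entropic interpolation with the corrector contractions of Lemma~\ref{eq:correctors} and a hypoelliptic gradient estimate of Guillin--Wang type for the Langevin semigroup. I would work first under the stronger regularity~\ref{H4} (so that the representations of Section~\ref{fg:KSP} apply) and then pass to~\ref{H3} by the approximation of Section~\ref{sec: approx}.

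Since $\log\rho^T_t=\log f^T_t+\log g^T_t$ and the norms $|\cdot|_{M^{-1}}$, $|\cdot|_{N^{-1}}$ are equivalent to the Euclidean norm, one has $\cI(\mu^T_t)\lesssim\varphi^T(t)+\psi^T(t)$. Lemma~\ref{eq:correctors} tells us that $\varphi^T$ is non-increasing while $\psi^T$ is non-decreasing on $[0,T]$, so for $t\in[\delta,T-\delta]$,
\[
\varphi^T(t)\leq\varphi^T(\delta)\quad\text{and}\quad\psi^T(t)\leq\psi^T(T-\delta).
\]
The heart of the proof is therefore to establish $\varphi^T(\delta)+\psi^T(T-\delta)\lesssim\delta^{-3}(\cC_T(\mu,\nu)-\cH(\mu^T_t|\frm))$. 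For this I would invoke the Guillin--Wang hypoelliptic gradient inequality, which for the Langevin semigroup yields bounds of the form $\abs{\nabla P_s h}^2\lesssim s^{-3}(P_s(h\log h)-(P_sh)\log(P_sh))$ (and the adjoint analogue for $P^*_s$); the cubic scaling $s^{-3}$ reflects the slower $s^{3/2}$ gain of regularity in the position variables. Applying this with $s=T-\delta$ to $h=g^T$, integrating against $\rho^T_{T-\delta}\,\De\frm=f^T_{T-\delta}g^T_{T-\delta}\De\frm$, and using the identities~\eqref{derivata h} together with the representation in Lemma~\ref{lemmarapprcosto}, produces the desired bound on $\psi^T(T-\delta)$; the estimate for $\varphi^T(\delta)$ is symmetric, using $P^*_s$ on $h=f^T$.

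The main obstacle is to arrange the Guillin--Wang estimate so that, after integration, its right-hand side matches exactly the \emph{excess cost} $\cC_T-\cH(\mu^T_t|\frm)$ rather than the cruder bound $\cC_T$ alone: this demands a careful combination of~\eqref{derivata h} with the non-negativity of the $\Gamma$-integrals in~\eqref{rappr costo}, instead of merely discarding positive terms, while keeping track of the cubic factor $\delta^{-3}$ throughout. Once the bound is obtained under~\ref{H4}, passage to~\ref{H3} follows by approximating $\mu,\nu$ with compactly supported bounded-density measures via Section~\ref{sec: approx} and using joint lower semicontinuity of $\cI$ together with continuity of the entropic cost and marginal entropies along the approximating sequence.
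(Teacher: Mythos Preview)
Your strategy is workable but takes an unnecessary detour, and it contains a bookkeeping slip that needs fixing.

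\textbf{The slip.} To bound $\psi^T(T-\delta)$ you must control $\nabla\log g^T_{T-\delta}$, and since $g^T_{T-\delta}=P_{T-(T-\delta)}g^T=P_\delta g^T$, the Guillin--Wang inequality must be applied with semigroup time $s=\delta$, not $s=T-\delta$. (Symmetrically, $f^T_\delta=P^*_\delta f^T$ requires the adjoint estimate at $s=\delta$.) With this correction your route does go through: one gets $\varphi^T(\delta)\lesssim\delta^{-3}\bigl(h_f^T(0)-h_f^T(\delta)\bigr)$ and $\psi^T(T-\delta)\lesssim\delta^{-3}\bigl(h_b^T(T)-h_b^T(T-\delta)\bigr)$, and then the monotonicity of $h_f^T$ (decreasing) and $h_b^T$ (increasing) from~\eqref{derivata h} gives $h_f^T(\delta)+h_b^T(T-\delta)\geq\cH(\mu^T_t|\frm)$ for $t\in[\delta,T-\delta]$.

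\textbf{The detour.} The paper's argument is shorter and bypasses the correctors entirely. It applies Guillin--Wang \emph{directly at time $t$}: since $t\geq\delta$ and $T-t\geq\delta$, one has
\[
\frac{|\nabla g^T_t|^2}{g^T_t}\lesssim\delta^{-3}\bigl[P_{T-t}(g^T\log g^T)-g^T_t\log g^T_t\bigr],\qquad
\frac{|\nabla f^T_t|^2}{f^T_t}\lesssim\delta^{-3}\bigl[P^*_t(f^T\log f^T)-f^T_t\log f^T_t\bigr].
\]
Integrating both against the complementary factor ($f^T_t$, resp.\ $g^T_t$) and using $P$--$P^*$ duality yields immediately
\[
\cI(\mu^T_t)\lesssim\delta^{-3}\Bigl(\!\int\log g^T\,\De\nu+\int\log f^T\,\De\mu-\cH(\mu^T_t|\frm)\Bigr)=\delta^{-3}\bigl(\cC_T(\mu,\nu)-\cH(\mu^T_t|\frm)\bigr),
\]
by~\eqref{costo con h}. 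No corrector monotonicity (Lemma~\ref{eq:correctors}), no Lemma~\ref{lemmarapprcosto}, and no separate monotonicity of $h_f^T,h_b^T$ are needed; the ``obstacle'' you flag simply does not arise. Your passage from~\ref{H4} to~\ref{H3} via the approximation of Section~\ref{sec: approx} and lower semicontinuity of $\cI$ and $\cH(\cdot|\frm)$ matches the paper.
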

\begin{proof} Let us first work under \ref{H4}.
We claim that for any $t\in[\delta,T-\delta]$ it holds
\begin{equation}\label{bound:after:guillin}
  \abs{\nabla P_{T-t} g^T}^2\lesssim \delta^{-3}\,\Bigl[P_{T-t}(g^T\,\log g^T)-(P_{T-t}\,g^T)\log(P_{T-t}\,g^T)\Bigr] \,P_{T-t}\,g^T\,.
 \end{equation}
 Indeed by applying Corollary 3.2 in~\cite{GuillinWang} to any directional derivative we have
 \begin{align*}
      \abs{\partial_{x_i} P_{T-t} g^T}^2\leq 4\,\inf_{s\in(0,T-t]} \Psi_s(1,0)\Bigl[P_{T-t}(g^T\,\log g^T)-(P_{T-t}\,g^T)\log(P_{T-t}\,g^T)\Bigr] \,P_{T-t}\,g^T\,,\\
      \abs{\partial_{v_i} P_{T-t}g^T}^2\leq 4\,\inf_{s\in(0,T-t]} \Psi_s(0,1)\Bigl[P_{T-t}(g^T\,\log g^T)-(P_{T-t}\,g^T)\log(P_{T-t}\,g^T)\Bigr] \,P_{T-t}\,g^T\,,
 \end{align*}
 where $\Psi_s(a,b)$ is defined for any $a,b>0$ as the quantity
 \[\Psi_s(a,b)\coloneqq\frac{1}{2\gamma} s \left[a\left(\frac{6}{s^2}+\beta+\frac{3\gamma}{2s}\right)+b\left(\frac{4}{s}+\frac{4\beta}{27}s+\gamma\right)\right]^2\,.\]
 By considering $s=\delta\in(0,1]$ we can bound the above RHS with $\delta^{-3}$, up to a multiplicative constant. Particularly this yields  \eqref{bound:after:guillin}. Similarly one can prove that it holds
\begin{equation*}
  \abs{\nabla P^\ast_{t} f^T}^2\lesssim \delta^{-3}\,\Bigl[P^\ast_{t}(f^T\,\log f^T)-(P^\ast_{t}\,f^T)\log(P^\ast_{t}\,f^T)\Bigr] \,P^\ast_{t}\,f^T\,.
 \end{equation*}
 Therefore because of the $fg$-decomposition \eqref{eq: fg dec semigroups} we obtain
\begin{equation*}
\begin{aligned}
    \cI(\mu^T_t) &\leq 2 \int_{\R^{2d}}\left[ \frac{\abs{\nabla P_{T-t} g^T}^2}{P_{T-t} g^T} P^\ast_t f^T + \frac{\abs{\nabla P^\ast_{t} f^T}^2}{P^\ast_t f^T} P_{T-t} g^T\right]\,\De \frm\\
    &\lesssim \delta^{-3}\int_{\R^{2d}} \biggl\{\Bigl[P_{T-t}(g^T\,\log g^T)-(P_{T-t}\,g^T)\log(P_{T-t}\,g^T)\Bigr] P_t^\ast f^T \\ &\qquad\qquad\qquad\qquad\qquad+ \Bigl[P^\ast_{t}(f^T\,\log f^T)-(P^\ast_{t}\,f^T)\log(P^\ast_{t}\,f^T)\Bigr] P_{T-t} g^T\biggr\} \,\De\frm
\end{aligned}
\end{equation*}
By integration by parts and \eqref{eq: fg dec semigroups} this last displacement equals
\begin{equation*}
    \delta^{-3} \bigg(\int_{\R^{d}} \log g^T \,\De \nu + \int_{\R^d}\log f^T \,\De \mu  - \int_{\R^{2d}} \log \rho^T_t\,\rho_t^T \,\De\frm \bigg)\,,
\end{equation*}
and the thesis follows in view of~\eqref{costo con h}.

Now let us just assume \ref{H3}.
Firstly, define the probability measure $q^T_n$ as the measure whose $\rmR_{0,T}$-density is given by
 \begin{equation}\label{approx of optimizer}\frac{\De q^T_n}{\De \rmR_{0,T}}\coloneqq \left(\rho^{T}\wedge n\right)\,\frac{\IND_{K_n}}{C_n}\,,\end{equation}
 where $(K_n)_{n\in\N}$ is an increasing sequence of compact sets in $\R^{4d}$ and $C_n$ is the normalising constant. Then, by applying Lemma \ref{n:lemma} we know that the marginals $\mu^n\coloneqq(\mathrm{proj}_{x_1})_{\#}q^T_n$ and $\nu^n\coloneqq(\mathrm{proj}_{x_2})_{\#}q^T_n$ satisfy \ref{H4} and by means of Proposition \ref{full:approx} and Corollary \ref{full:cor:approx} it follows that there exists a unique minimizer $\mu^{n,T}\in \cP(\Omega)$ for \ref{KSP}  with marginals $\mu^n,\,\nu^n$, and as soon as $n$ diverges it holds
\begin{equation}\label{n:conv:new}\mu^{n,T}_t\weakto\mu^T_t\qquad\text{ and }\qquad\cC_T\left(\mu^n,\nu^n\right)\to\cC_T\left(\mu,\nu\right)\,.\end{equation}
Then, the thesis in the general case follows from the one under \ref{H4} and the lower semicontinuity of $\cI(\cdot)$ and $\cH(\cdot|\frm)$.
\end{proof}

As a byproduct of Proposition \ref{gio:prop:T/2} and Proposition \ref{prop: regularizing effect} we get

\begin{corollary}\label{prop:T/2}
Under~\ref{H1},~\ref{H2} and~\ref{H4},  there exists $C_{d,\alpha,\beta,\gamma}>0$ such that for any $0<\delta\leq1$ and $t\in[\delta,\,T]$, as soon as $T>\tempo$, it holds
\begin{equation}\label{halfcorrectors}
	\varphi^T(t) \lesssim \delta^{-3}\,e^{-2\kappa t}\,\cC_T(\mu,\nu)\quad\mbox{and}\quad  \psi^T(T-t)\lesssim \delta^{-3}\,e^{-2 \kappa t}\,\cC_T(\mu,\nu)\,.
\end{equation}
\end{corollary}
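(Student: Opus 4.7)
The plan is short: this corollary is a direct synthesis of the two preceding propositions, so the proof amounts to feeding the Fisher-information bound of Proposition \ref{prop: regularizing effect} into the corrector contraction estimate of Proposition \ref{gio:prop:T/2}. Proposition \ref{gio:prop:T/2} already gives, for $t\in[\delta,T]$ and $T>\tempo$,
\[\varphi^T(t)\lesssim e^{-2\kappa t}\bigl[\cI(\mu^T_\delta)+\cI(\mu^T_{T-\delta})\bigr],\qquad \psi^T(T-t)\lesssim e^{-2\kappa t}\bigl[\cI(\mu^T_\delta)+\cI(\mu^T_{T-\delta})\bigr],\]
so the only remaining work is to bound the two Fisher informations at the boundary times $\delta$ and $T-\delta$ in terms of $\cC_T(\mu,\nu)$.

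First I would verify that both times $\delta$ and $T-\delta$ lie in the interval $[\delta,T-\delta]$ on which Proposition \ref{prop: regularizing effect} is valid. This reduces to $T\geq 2\delta$, which is built into the standing hypothesis $T>\tempo$ once one chooses (without loss) $C_{d,\alpha,\beta,\gamma}\geq 1$ so that the logarithmic term is non-negative; this at worst enlarges the universal constant.

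Next, applying Proposition \ref{prop: regularizing effect} at $t=\delta$ and $t=T-\delta$, and using the non-negativity of the relative entropy to drop the subtracted terms, yields
\[\cI(\mu^T_\delta)\lesssim \delta^{-3}\bigl(\cC_T(\mu,\nu)-\cH(\mu^T_\delta\mid\frm)\bigr)\leq \delta^{-3}\,\cC_T(\mu,\nu),\]
and the analogous inequality for $\cI(\mu^T_{T-\delta})$. Substituting both into the estimates from Proposition \ref{gio:prop:T/2} and absorbing numerical factors into a new constant $C_{d,\alpha,\beta,\gamma}$ produces precisely \eqref{halfcorrectors}.

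There is no genuine obstacle in this argument: exponential decay, the $\delta^{-3}$ factor, and the threshold on $T$ are all furnished by the two input propositions, and the only manipulation is the trivial bound $\cC_T(\mu,\nu)-\cH(\mu^T_t\mid\frm)\leq \cC_T(\mu,\nu)$. The subtler content is hidden in the inputs themselves, namely the hypocoercive gradient estimate of \cite{GuillinWang} behind Proposition \ref{prop: regularizing effect} and the twisted-norm contraction argument behind Proposition \ref{gio:prop:T/2}.
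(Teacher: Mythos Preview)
Your proposal is correct and matches the paper's approach exactly: the paper simply states that the corollary is ``a byproduct of Proposition \ref{prop: regularizing effect} and Proposition \ref{gio:prop:T/2}'' without giving any further details, and your argument fills in precisely the two steps (applying \eqref{gio:halfcorrectors} and then bounding $\cI(\mu^T_\delta),\cI(\mu^T_{T-\delta})$ via \eqref{Ibound} with the trivial drop of the entropy term) that this sentence encodes.
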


\begin{proof}[Proof of Theorem \ref{teo entropic turnpike}] We start proving the result under~\ref{H4}.
Since $\cI(\mu^T_t)\lesssim \varphi^T(t)+\psi^T(t)$, the first inequality (cf.\ \eqref{fisher eq}) is an immediate consequence of Corollary~\ref{prop:T/2}. The relative entropy bound (cf.\ \eqref{entropic eq}) follows from the first one by means of~\eqref{LS}. In order to extend  \eqref{fisher eq} and  \eqref{entropic eq} to \ref{H3}, it is enough to consider the approximation of the optimizer (cf. \eqref{approx of optimizer} and \eqref{n:conv:new}) together with the lower semicontinuity of $\cI(\cdot)$ and $\cH(\cdot|\frm)$.
Finally, \eqref{bis:entropic eq} follows from \eqref{entropic eq} by means of Lemma \ref{Talagrand4} below.
\end{proof}

\subsection{Long-time behaviour of the kinetic entropic cost}
Throughout the whole section we will always assume~\ref{H1} and~\ref{H3} to be true. Let us remark that \ref{H1} implies that $\frm\in\cP_2(\R^{2d})$.
 In what follows we are going to prove Theorem \ref{long-time cost}, but first we need some preparation. The first two claims in Theorem \ref{long-time cost} will be proved via a $\Gamma$-convergence approach (cf. Proposition \ref{propgammaconv} and Lemma \ref{gammalemmacoercive}) similar to the one used in \cite{conforti2021formula} for the classical Schr\"odinger problem. The main difference with \cite{conforti2021formula} is the lack of compactness for the set $\Pi_X(\mu,\nu)$. On the other hand, the proof of the latter two bounds in Theorem \ref{long-time cost} will rely on the corrector estimates given in Section \ref{sec: corr est}.

\begin{prop}[A $\Gamma$-convergence result]\label{propgammaconv}
Let $(T_n)_{n\in\N}$ be a sequence of positive real numbers converging to $\oo$, and for each $n\in\N$ consider the functional $\cH\left(\cdot\mid \rmR_{0,T_n}\right)$ defined on $\Pi_X(\mu,\,\nu)$ endowed with the weak topology. Then
 \[\Gamma-\lim_{n\to\oo}\cH(\cdot|\rmR_{0,T_n})=\cH(\cdot|\frm\otimes\frm)\,.\]
\end{prop}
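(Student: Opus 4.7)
The plan is to establish the liminf and limsup (recovery) inequalities separately. Two basic facts drive both: (i) the weak convergence $\rmR_{0,T_n} \weakto \frm \otimes \frm$ that follows from the ergodicity of the Langevin dynamics under \ref{H1}; and (ii) the identity $\De \rmR_{0,T_n} = p_{T_n} \, \De(\frm\otimes\frm)$, which yields the decomposition
\[
\cH(\pi | \rmR_{0,T_n}) = \cH(\pi | \frm \otimes \frm) - \int_{\R^{4d}} \log p_{T_n} \, \De \pi
\]
whenever $\pi \ll \frm \otimes \frm$ and $\log p_{T_n} \in L^1(\pi)$.

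For the liminf inequality I would invoke the Donsker--Varadhan variational representation
\[
\cH(\pi_n | \rmR_{0,T_n}) = \sup_{f \in C_b(\R^{4d})} \bigg\{ \int f \, \De \pi_n - \log \int e^f \, \De \rmR_{0,T_n} \bigg\}.
\]
For each fixed $f \in C_b(\R^{4d})$, the joint weak convergences $\pi_n \weakto \pi$ and $\rmR_{0,T_n} \weakto \frm \otimes \frm$ permit passing to the limit inside the braces (note that $e^f \in C_b$). Taking the supremum over $f$ then yields
\[
\liminf_{n\to\infty} \cH(\pi_n | \rmR_{0,T_n}) \geq \sup_{f \in C_b(\R^{4d})}\bigg\{ \int f \, \De \pi - \log \int e^f \, \De(\frm\otimes\frm) \bigg\} = \cH(\pi | \frm\otimes\frm).
\]

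For the recovery sequence I would simply take the constant sequence $\pi_n \equiv \pi$. The case $\cH(\pi | \frm\otimes\frm) = +\infty$ is trivial, so assume finiteness. By the decomposition above, it suffices to prove $\int \log p_{T_n} \, \De \pi \to 0$. The pointwise convergence $\log p_{T_n} \to 0$ follows from the smoothness of $p_T$ and (i). Coupling the lower bound in Lemma \ref{lemmaappendicebound} with the symmetric upper bound that can be extracted from the two-sided Gaussian estimates of \cite{DelarueMenozzi}, one obtains a control $|\log p_{T_n}(z_0,z_T)| \lesssim 1 + |z_0|^2 + |z_T|^2$, uniform in $T_n \geq T_0$. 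Since $\cH(\pi|\frm\otimes\frm) < \infty$ forces finite second moments for $\pi$ through Talagrand's inequality \eqref{talagrand}, dominated convergence applies and closes the argument.

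The most delicate ingredient is the uniform-in-$T_n$ upper bound on $\log p_{T_n}$: while the lower counterpart is already packaged in Lemma \ref{lemmaappendicebound}, extracting the matching upper polynomial bound from \cite{DelarueMenozzi} requires a careful inspection to verify that the dominating quadratic function is independent of $T_n$ once $T_n$ is bounded away from zero.
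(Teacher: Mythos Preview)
Your liminf argument via Donsker--Varadhan is essentially the paper's: the paper simply cites joint lower semicontinuity of the relative entropy after establishing $\rmR_{0,T_n}\weakto\frm\otimes\frm$, which is precisely what your variational computation proves.

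For the limsup you take the same constant recovery sequence and the same decomposition as the paper, but the passage to the limit differs and contains a gap. Your claim that the pointwise convergence $\log p_{T_n}\to 0$ ``follows from the smoothness of $p_T$ and (i)'' is unjustified: weak convergence of $\rmR_{0,T_n}$ to $\frm\otimes\frm$ says nothing about pointwise convergence of the densities $p_{T_n}$ to $1$, however smooth they may be for each fixed $n$. A correct argument would write $p_{T_n}(z_0,z_1)=\int p_1(z,z_1)\,p_{T_n-1}(z_0,z)\,\De\frm(z)$, check that $p_1(\cdot,z_1)$ is bounded and continuous, and invoke convergence to equilibrium of the kernel started from $z_0$; this is doable but is not what you wrote.

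The paper sidesteps both this point and your ``most delicate ingredient'' (the uniform upper bound on $\log p_{T_n}$) by using Fatou's lemma rather than dominated convergence. Only the \emph{lower} bound of Lemma~\ref{lemmaappendicebound} is needed to license Fatou, after which it suffices to prove the one-sided statement $\liminf_n \log p_{T_n}\geq 0$ pointwise; the paper gets this from $p_{T_n}=P_{T_n-t}\bigl(p_t(\cdot,z_1)\bigr)(z_0)\geq P_{T_n-t}\bigl(p_t(\cdot,z_1)\wedge M\bigr)(z_0)$, strong mixing applied to the bounded continuous truncation, and then $M\to\infty$. No matching upper heat-kernel estimate and no full pointwise convergence are required. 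Your route can be repaired along the lines above, but the paper's is shorter and uses strictly less input.
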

\begin{proof}
($\Gamma$-convergence lower bound inequality) We prove that for any sequence $\left(q_n\right)_{n\in\N}\subset \Pi_X\left(\mu,\,\nu\right)$ that converges weakly to some $q\in\Pi_X(\mu,\,\nu)$ 
\begin{equation}\label{gammaliminf}
  \liminf_{n\to\oo} \cH\left(q_n\mid\rmR_{0,T_n}\right)\geq \cH\left(q\mid \mathfrak{m}\otimes\mathfrak{m}\right)\,.
 \end{equation}
 Note that since $P_t$ is strongly mixing \cite[Theorem 11.14]{DaPrato} for any $\psi,\phi\in C_b(\R^{2d})$  it holds
 \begin{equation*} \begin{aligned}&\int_{\R^{2d}}\int_{\R^{2d}}\psi(x,v)\phi(y,w)\De \rmR_{0,T_n}=\int_{\R^{2d}}\psi\,P_{T_n}\phi\,\De \mathfrak{m}\,
 \mapor{n\to \oo}\,\int_{\R^{2d}}\int_{\R^{2d}}\psi(x,v)\phi(y,w)\De\mathfrak{m}\otimes\De\mathfrak{m}\,.
 \end{aligned}\end{equation*}
  From the Portmanteau Theorem, it follows that $\rmR_{0,T_n}\weakto \mathfrak{m}\otimes\mathfrak{m}$. Then \eqref{gammaliminf} follows from the lower semicontinuity of the relative entropy.

($\Gamma$-convergence upper bound inequality) We prove that  for any $q\in\Pi_X(\mu,\,\nu)$ it holds 
\begin{equation}\label{gammalimsupparticolare}
 \limsup_{n\to\oo} \cH\left(q\mid\rmR_{0,T_{n}}\right)\leq \cH\left(q\mid\mathfrak{m}\otimes\mathfrak{m}\right)\,.
\end{equation}
We may assume $\cH(q|\frm\otimes\frm)<\oo$ otherwise the above inequality is trivial. Note that this implies $q\in\cP_2(\R^{4d})$ since $\mu,\nu\in\cP_2(\R^d)$ (cf. \eqref{secondmomentbound}) while
\begin{align*}\int_{\R^d}\abs{v}^2\De (\mathrm{proj}_{v_1})_{\#}q\leq 2\int_{\R^d}\abs{v}^2\De\frm_V+2\,\cW_2((\mathrm{proj}_{v_1})_{\#}q,\frm_V)^2\\
\overset{\eqref{talagrand}}{\lesssim} 1+ \,\cH((\mathrm{proj}_{v_1})_{\#}q|\frm_V)\leq 1+\cH(q|\frm\otimes\frm)<\oo\,,\end{align*}
and similarly for the measure $(\mathrm{proj}_{v_2})_{\#}q$.
Then we have
\[\cH\left(q\mid\rmR_{0,T_n}\right)=\cH\left(q\mid\mathfrak{m}\otimes\mathfrak{m}\right)-\int_{\R^{2d}\times\R^{2d}} \log p_{T_n}\bigl((x,v),\,(y,w)\bigr)\,\De q\,,\]
 Thanks to the lower bound given in Lemma \ref{lemmaappendicebound} and the fact that $q\in\cP_2(\R^{4d})$, we can apply Fatou's Lemma and get
  \begin{equation}\label{appolemmagamma5}\limsup_{n\to\oo} \cH\left(q\mid\rmR_{0,T_n}\right)\leq\cH\left(q\mid\mathfrak{m}\otimes\mathfrak{m}\right)-\int_{\R^{2d}\times\R^{2d}} \liminf_{n\to\oo}  \log p_{T_n}\bigl((x,v),\,(y,w)\bigr)\,\De q\,.
  \end{equation}
 Now, for all $t>0$ and for all $(x,v),\,(y,w)\in\R^{2d}$
 \[p_{T_n}\bigl((x,v),\,(y,w)\bigr)= P_{T_n-t}\left(p_t \bigl(\cdot\,,\,(y,w)\bigr)\right)(x,v)\,.\]
 For any $M>0$, we introduce the function $p_t^M(\cdot\,,\,(y,w))\coloneqq p_t (\cdot\,,\,(y,w))\wedge M\in C_b(\R^{2d})$. Then, since $P_T$ is strongly mixing (cf. \cite[Theorem 11.14]{DaPrato}), we get
 \begin{equation*}\begin{aligned}
 p_{T_n}&\bigl((x,v),\,(y,w)\bigr)\geq P_{T_n-t}\left(p_t^M \bigl(\cdot\,,\,(y,w)\bigr)\right)(x,v)\,\mapor{n\to\oo}\,\int_{\R^{2d}}p_t^M \bigl((x,v),(y,w)\bigr)\De \mathfrak{m}(x,v)\,.
 \end{aligned}\end{equation*}

 Taking the limit as $M\to\oo$, by dominated convergence we get that
 \[\int_{\R^{2d}}p_t^M \bigl((x,v),(y,w)\bigr)\De \mathfrak{m}(x,v)\,\mapor{M\to\oo} \int_{\R^{2d}}p_t \bigl ((x,v),(y,w)\bigr)\De \mathfrak{m}(x,v)=1\,,\]
 where the last equality follows from \eqref{physrever}.
 Therefore it holds
 \[\liminf_{n\to\oo}\log p_{T_{n}}\bigl((x,v),\,(y,w)\bigr)\geq 0\,,\quad\mathfrak{m}\otimes\mathfrak{m}-a.s.\]
which, together with  $q\ll\mathfrak{m}\otimes\mathfrak{m}$ (since $\cH\left(q \mid\mathfrak{m}\otimes\mathfrak{m}\right)<\oo$), leads to
 \[\liminf_{n\to\oo}\log p_{T_{n}}\bigl((x,v),\,(y,w)\bigr)\geq 0\,,\qquad q\text{-a.s.}\]
Therefore, from \eqref{appolemmagamma5} we get inequality \eqref{gammalimsupparticolare}. The desired $\Gamma$-convergence follows as a byproduct of \eqref{gammaliminf} and \eqref{gammalimsupparticolare}.
\end{proof}
Even though here we are just interested in the $\Gamma$-convergence (as introduced by De Giorgi) on $\Pi_X(\mu,\nu)$ equipped with the weak topology, the previous result is actually stronger: indeed we have actually proven the Mosco convergence of the functional $\cH\left(\cdot\mid \rmR_{0,T_n}\right)$  since we have considered a constant sequence $q_n=q$ for the upper bound inequality.

\begin{lemma}[Equicoerciveness]\label{gammalemmacoercive}
 The family $\left\{\cH(\cdot\mid \rmR_{0,T_n})\colon \Pi_X(\mu,\nu)\to[0,\oo]\right\}_{n\in\N}$ is equicoercive, i.e. for any $h\in\R$ there exists a (weakly) compact subset $K_h\subset\Pi_X(\mu,\,\nu)$ such that
 \[\bigl\{q\in\Pi_X\left(\mu,\,\nu\right) \text{ s.t. }\cH(q\mid\rmR_{0,T_n})\leq h\bigr\}\subseteq K_h\quad\forall n\in\N\,.\]
\end{lemma}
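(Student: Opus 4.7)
The strategy is to convert the uniform entropy bound into a uniform second-moment bound on the velocity variables, and then invoke Prokhorov's theorem. Since every $q\in\Pi_X(\mu,\nu)$ has fixed space marginals $\mu,\nu\in\cP_2(\R^d)$ by~\eqref{secondmomentbound}, it suffices to bound the velocity components uniformly. I would actually prove the stronger version in which one takes the union of sub-level sets over $n$ rather than the intersection, as this is what downstream $\Gamma$-convergence arguments require.

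The key step is the Donsker--Varadhan variational identity applied to the non-negative function $\phi(x,v,y,w)=\varepsilon(|v|^2+|w|^2)$, extended to unbounded $\phi$ via the monotone approximation $\phi\wedge M$:
\[
    \int_{\R^{4d}}\varepsilon(|v|^2+|w|^2)\,\De q \,\leq\, \cH(q\mid\rmR_{0,T_n})\,+\,\log\int_{\R^{4d}}e^{\varepsilon(|v|^2+|w|^2)}\,\De\rmR_{0,T_n}.
\]
The crux is then to bound the log-Laplace uniformly in $n$. Although $\rmR_{0,T_n}$ is not a product measure, the stationarity of the Langevin dynamics ensures that both its time-$0$ and time-$T_n$ marginals coincide with $\frm$; Cauchy--Schwarz therefore gives
\[
    \int_{\R^{4d}} e^{\varepsilon(|v|^2+|w|^2)}\,\De\rmR_{0,T_n}\,\leq\,\Big(\bbE_\rmR\bigl[e^{2\varepsilon|V_0|^2}\bigr]\Big)^{\!1/2}\Big(\bbE_\rmR\bigl[e^{2\varepsilon|V_{T_n}|^2}\bigr]\Big)^{\!1/2}\,=\,\int_{\R^d} e^{2\varepsilon|v|^2}\,\De\frm_V,
\]
which is finite and independent of $n$ as soon as $\varepsilon\in(0,1/4)$, since $\frm_V$ is a standard Gaussian.

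Combining the two displays yields $\int(|v|^2+|w|^2)\,\De q\leq C(t)$ uniformly over all $q\in\Pi_X(\mu,\nu)$ and $n\in\N$ satisfying $\cH(q\mid\rmR_{0,T_n})\leq t$. Setting
\[
    K_t\coloneqq\left\{q\in\Pi_X(\mu,\nu)\ :\ \int_{\R^{4d}}(|x|^2+|v|^2+|y|^2+|w|^2)\,\De q\leq C'(t)\right\},
\]
Markov's inequality yields tightness, while weak closedness follows because the marginal constraints defining $\Pi_X(\mu,\nu)$ are preserved under weak convergence and the second-moment functional is weakly lower semicontinuous. Prokhorov's theorem then delivers weak compactness of $K_t$.

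The main obstacle is the lack of a product structure for $\rmR_{0,T_n}$, which prevents a direct tensor-product computation of the exponential moment; Cauchy--Schwarz combined with the marginal stationarity of $\frm$ under $P_t$ resolves this at the price of a factor $2$ in the exponent, easily absorbed by choosing $\varepsilon<1/4$.
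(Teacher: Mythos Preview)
Your proof is correct and, as you anticipate, actually establishes the stronger \emph{union} version of equicoercivity (i.e.\ a single compact $K_t$ containing $\bigcup_n\{\cH(\cdot\mid\rmR_{0,T_n})\leq t\}$), which is the form required by the fundamental theorem of $\Gamma$-convergence invoked later in the paper.

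The paper's own argument is a one-liner and takes a very different route: it simply appeals to the fact that for each fixed $n$ the sublevel set $\{\cH(\cdot\mid\rmR_{0,T_n})\leq t\}$ is weakly compact (Dupuis--Ellis), and that $\Pi_X(\mu,\nu)$ is closed. Read literally, the lemma as stated concerns the \emph{intersection} $\{q:\cH(q\mid\rmR_{0,T_n})\leq t\ \forall n\}$, for which this argument is immediate---pick any fixed $n_0$ and take $K_t=\{\cH(\cdot\mid\rmR_{0,T_{n_0}})\leq t\}\cap\Pi_X(\mu,\nu)$. Your moment-based approach via Donsker--Varadhan and the Cauchy--Schwarz trick on the exponential moment is more hands-on but buys uniformity in $n$, exploiting that both marginals of $\rmR_{0,T_n}$ equal $\frm$ regardless of $n$. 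An alternative shortcut to the same uniformity, closer in spirit to the paper's abstract approach, would be to note that $\{\rmR_{0,T_n}\}_n\subset\Pi(\frm,\frm)$ is itself tight, and then feed this into the quantitative Dupuis--Ellis bound; your explicit computation has the merit of being entirely self-contained.
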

\begin{proof}
 Since $(\rmR_{0,T_n})_{n\in\N}$ is tight, a proof of this result is obtained by following the same argument given in \cite[Lemma 1.4.3c]{DupuisEllis}.
\end{proof}

The next two results are a consequence of the corrector estimates of Section \ref{sec: corr est}. In the first one we consider the long-time behaviour of the marginals of the solution to \ref{KSP} at times $t=0,T$; in the second one we give a bound for the entropic cost, uniformly in time.

\begin{theorem}\label{exp marginal entropies}
 Under assumptions \ref{H1},\ref{H2} and \ref{H4} there exists a positive constant $C_{d,\alpha,\beta,\gamma}$ such that for any $0<\delta\leq1$ and $T>\tempo$ it holds
 \begin{equation}\label{expmarginals}\begin{aligned}\abs{\cH(\mu^T_0|\frm)-\cH(\mu|\frm_X)}\leq C_{d,\alpha,\beta,\gamma} \,\delta^{-3}\,\cC_T(\mu,\nu)\,e^{-2\kappa\, T}\,,\\
 \abs{\cH(\mu^T_T|\frm)-\cH(\nu|\frm_X)}\leq C_{d,\alpha,\beta,\gamma} \,\delta^{-3}\,\cC_T(\mu,\nu)\,e^{-2\kappa\, T}\,.\end{aligned}\end{equation}
\end{theorem}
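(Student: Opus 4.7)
The plan is to exploit the product structure $\frm=\frm_X\otimes\frm_V$ and reduce both bounds to the boundary values of the correctors $\psi^T$ and $\varphi^T$ already controlled in Corollary~\ref{prop:T/2}. Since $\mu^T_0$ has $x$-marginal $\mu$ by construction, the chain rule for relative entropy gives
\begin{equation*}
\cH\bigl(\mu^T_0\mid\frm\bigr) - \cH(\mu\mid \frm_X) \;=\; \int_{\R^d} \cH\!\bigl(K^T_0(x,\cdot)\,\big|\,\frm_V\bigr)\,\De\mu(x) \;\geq\; 0,
\end{equation*}
where $K^T_0(x,\cdot)$ denotes the conditional law of $V_0$ given $X_0=x$ under $\mu^T_0$. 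In particular the absolute value is redundant and it suffices to bound the right-hand side from above.

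The $fg$-decomposition $\rho^T_0 = f^T\cdot P_Tg^T$ from Proposition~\ref{fg lemma}, combined with the Schr\"odinger system \eqref{SSp}, identifies
\begin{equation*}
\frac{\De K^T_0(x,\cdot)}{\De\frm_V}(v) \;=\; \frac{P_T g^T(x,v)}{\int P_T g^T(x,v')\,\De\frm_V(v')}\,,
\end{equation*}
whose logarithmic $v$-gradient is simply $\nabla_v\log P_T g^T(x,v)$. Since $\frm_V$ is standard Gaussian it satisfies the log-Sobolev inequality, which dominates each conditional entropy by the corresponding Fisher information with respect to $\frm_V$. Unfolding the resulting integral and using the first line of~\eqref{SSp} to cancel $f^T\int P_T g^T\,\De\frm_V$ against $\De\mu/\De\frm_X$ collapses it to
\begin{equation*}
\int \cH\bigl(K^T_0(x,\cdot)\mid\frm_V\bigr)\,\De\mu(x) \;\lesssim\; \int_{\R^{2d}}\abs{\nabla_v\log g^T_0}^2\,\rho^T_0\,\De\frm \;\lesssim\; \psi^T(0),
\end{equation*}
where the last step uses $\abs{\nabla_v\cdot}^2\leq\abs{\nabla\cdot}^2$ and the fact that the Euclidean and $\abs{\cdot}_{M^{-1}}$ norms are equivalent up to constants depending only on $d,\alpha,\beta,\gamma$.

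Corollary~\ref{prop:T/2} applied at $t=T$ (permissible once $T>\tempo$) yields $\psi^T(0)\lesssim \delta^{-3}\,e^{-2\kappa T}\,\cC_T(\mu,\nu)$, which is the first line of~\eqref{expmarginals}. The second line is completely symmetric: $\mu^T_T$ has $x$-marginal $\nu$ and $\rho^T_T = (P^\ast_T f^T)\cdot g^T$, so the same chain of reductions, now using the second line of~\eqref{SSp}, produces the dominant term $\int\abs{\nabla_v\log f^T_T}^2\,\rho^T_T\,\De\frm\lesssim \varphi^T(T)$, which Corollary~\ref{prop:T/2} bounds by $\delta^{-3}\,e^{-2\kappa T}\,\cC_T(\mu,\nu)$. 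I do not anticipate any serious obstacle: the only delicate points are (i) recognising that the entropy defect between $\cH(\mu^T_0\mid\frm)$ and $\cH(\mu\mid\frm_X)$ measures only the velocity-conditional spread, which after the Gaussian LSI becomes a pure velocity Fisher information, and (ii) matching this precisely with the endpoint values $\psi^T(0)$ and $\varphi^T(T)$ of the correctors.
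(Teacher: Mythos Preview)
Your argument is correct and follows essentially the same route as the paper: disintegrate $\cH(\mu^T_0\mid\frm)-\cH(\mu\mid\frm_X)$ as a velocity-conditional entropy, apply the Gaussian log-Sobolev inequality in $v$, use that $f^T$ is independent of $v$ so that only $\nabla_v\log g^T_0$ survives, and finish with the corrector bound $\psi^T(0)\lesssim\delta^{-3}e^{-2\kappa T}\cC_T(\mu,\nu)$ from Corollary~\ref{prop:T/2}. The paper writes the first step as the unnormalised entropy functional $\int\rho^T_0\log\rho^T_0-(\int\rho^T_0\,\De\frm_V)\log(\int\rho^T_0\,\De\frm_V)$ rather than via the chain rule, but this is only a cosmetic difference.
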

\begin{proof}
 We will prove only the first bound since the second one can be proved similarly. Since $\frac{\De\mu}{\De\frm_X}(\cdot)=\int_{\RD} \rho^T_0(\cdot,v)\De\frm_V(v)$, the log-Sobolev inequality for the Gaussian measure $\frm_V$ gives
 \begin{equation*}
  \begin{aligned}
   \cH(\mu^T_0|\frm)-&\cH(\mu|\frm_X)=
   \int_{\RD}\biggl[\int_{\RD} \rho^T_0\log\rho^T_0-\left(\int_{\RD} \rho^T_0\De\frm_V\right)\log\left(\int_{\RD} \rho^T_0\De\frm_V\right) \,\De\frm_V\biggr]\,\De\frm_X\\
   \leq&\int_{\RD}\biggl[\int_{\RD}\Big|\nabla_v \sqrt{\rho^T_0}\Big|^2\,\De\frm_V\biggr]\,\De\frm_X=\int_{\R^{2d}}\frac{1}{2}\abs{\frac{\nabla_v \rho^T_0}{\rho^T_0}}^2\rho^T_0\,\De\frm\\
   =&\frac{1}{2}\int_{\R^{2d}}\abs{\nabla_v\log (f^TP_Tg^T)}^2\rho^T_0\,\De\frm=\frac{1}{2}\int_{\R^{2d}}\abs{\nabla_v\log f^T+\nabla_v\log P_T g^T}^2\rho^T_0\,\De\frm\\
   =&\frac{1}{2}\int_{\R^{2d}}\abs{\nabla_v\log g_0^T}^2\rho^T_0\,\De\frm\lesssim \psi^T(0)
   \overset{\eqref{halfcorrectors}}{\lesssim}  \,\delta^{-3}\,\cC_T(\mu,\nu)\,e^{-2\kappa\, T},
  \end{aligned}
 \end{equation*}
 where the equality in the last line follows from the fact that $f^T=f^T(x)$ does not depend on the velocity variable. Finally, since ${\left(\mathrm{proj}_x\right)}_{\#}\,\mu^T_0=\mu$ we know that the left hand side term above is positive.
\end{proof}

Since it holds $\cH(\mu|\frm_X)=\cH\left( (X_0)_{\#}\mu^T|(X_0)_{\#}\rmR_{0,T}\right)\leq \cH\left( \mu^T|\rmR_{0,T}\right)=\cC_T(\mu,\nu)$, and similarly $\cH(\nu|\frm_X)\leq\cC_T(\mu,\nu)$, the following lower bound is always true
\begin{equation}\label{eq:lowerbound}
    \cC_T(\mu,\nu)\geq \frac{\cH(\mu|\frm_X)+\cH(\nu|\frm_X)}{2}\,.
\end{equation}
We now  give a corresponding upper bound for sufficiently large times.

\begin{lemma}\label{Talagrand4}
  Under~\ref{H1} and~\ref{H2} there exists a constant $C_{d,\alpha,\beta,\gamma}>0$ such that  for any $0<\delta\leq1$ and $T>(\tempo)\vee\bigl(\tempobis\bigr)$ it holds
\begin{equation}\label{eq:Talagrand4}\cC_T(\mu,\nu)\leq C_{d,\alpha,\beta,\gamma}\, \biggl[\cH(\mu|\frm_X)+\cH(\nu|\frm_X)\biggr]\,.\end{equation}
\end{lemma}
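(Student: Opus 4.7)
The plan is to combine the exact decomposition of $\cC_T(\mu,\nu)$ provided by Lemma~\ref{lemmarapprcosto} with the marginal-entropy estimates of Theorem~\ref{exp marginal entropies} and the corrector bounds of Corollary~\ref{prop:T/2}. I would first establish the bound under the stronger assumption~\ref{H4} (so the $fg$-decomposition and all preceding quantitative tools apply directly) and then remove this restriction by an approximation argument in the spirit of the proof of Proposition~\ref{prop: regularizing effect}.

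Assuming~\ref{H4}, apply Lemma~\ref{lemmarapprcosto} at the symmetric time $t=T/2$; the hypothesis $T>\tempo$ forces $T\ge 2\delta$, so $T/2\in[\delta,T-\delta]$ and the nonpositive term $-\cH(\mu^T_{T/2}|\frm)$ may simply be discarded. The boundary terms are controlled by Theorem~\ref{exp marginal entropies}, giving
\[\cH(\mu^T_0|\frm)+\cH(\mu^T_T|\frm)\le \cH(\mu|\frm_X)+\cH(\nu|\frm_X)+C_{d,\alpha,\beta,\gamma}\,\delta^{-3}e^{-2\kappa T}\,\cC_T(\mu,\nu).\]
For the two $\Gamma$-integrals, use the pointwise comparisons $\Gamma(h)=\gamma|\nabla_v h|^2\lesssim|\nabla h|^2_{M^{-1}}$ and $\Gamma(h)\lesssim|\nabla h|^2_{N^{-1}}$ coming from the equivalence of the Euclidean and twisted norms, followed by the corrector decay of Corollary~\ref{prop:T/2}:
\[\int_0^{T/2}\!\!\int\Gamma(\log g^T_s)\,\rho^T_s\,\De\frm\,\De s\lesssim \int_0^{T/2}\psi^T(s)\,\De s\lesssim \delta^{-3}\cC_T(\mu,\nu)\!\int_0^{T/2}\!e^{-2\kappa(T-s)}\De s\lesssim \delta^{-3}e^{-\kappa T}\cC_T(\mu,\nu),\]
and symmetrically $\int_{T/2}^T\!\int\Gamma(\log f^T_s)\rho^T_s\,\De\frm\,\De s\lesssim \delta^{-3}e^{-\kappa T}\cC_T(\mu,\nu)$. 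Assembling these pieces yields an inequality of the shape
\[\cC_T(\mu,\nu)\le \cH(\mu|\frm_X)+\cH(\nu|\frm_X)+C'_{d,\alpha,\beta,\gamma}\,\delta^{-3}\bigl(e^{-\kappa T}+e^{-2\kappa T}\bigr)\,\cC_T(\mu,\nu).\]
The additional hypothesis $T>\tempobis$ is exactly what is needed to make the coefficient multiplying $\cC_T$ on the right at most $1/2$, which we absorb to conclude $\cC_T(\mu,\nu)\le 2[\cH(\mu|\frm_X)+\cH(\nu|\frm_X)]$.

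To pass from \ref{H4} to the weaker \ref{H3} stated in the lemma, I would approximate $\mu,\nu$ by the spatial marginals $\mu^n,\nu^n$ of the truncated measures $q^T_n$ defined in~\eqref{approx of optimizer}: by Lemma~\ref{n:lemma} they satisfy~\ref{H4}, and by Proposition~\ref{full:approx} and Corollary~\ref{full:cor:approx} one has $\cC_T(\mu^n,\nu^n)\to\cC_T(\mu,\nu)$ together with $\cH(\mu^n|\frm_X)\to\cH(\mu|\frm_X)$ and $\cH(\nu^n|\frm_X)\to\cH(\nu|\frm_X)$. Since the \ref{H4}-bound involves only the universal constant $C_{d,\alpha,\beta,\gamma}$, the estimate passes to the limit. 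The conceptual heart of the argument is the fixed-point-style absorption at the end of the \ref{H4}-case; the only real technical point to watch is that both corrector bounds of Corollary~\ref{prop:T/2} apply at the chosen cutting time, i.e.\ that $T/2$ lies simultaneously in $[\delta,T]$ and $[0,T-\delta]$, which is precisely guaranteed by the standing lower bound on $T$.
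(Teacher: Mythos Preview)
Your argument under~\ref{H4} is correct and coincides with the paper's: the same decomposition at $t=T/2$, the same use of Theorem~\ref{exp marginal entropies} and Corollary~\ref{prop:T/2}, and the same absorption step.

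The gap is in the passage from~\ref{H4} to~\ref{H3}. You invoke the ``optimizer approximation'' $q^T_n$ and claim that Proposition~\ref{full:approx} and Corollary~\ref{full:cor:approx} yield $\cH(\mu^n|\frm_X)\to\cH(\mu|\frm_X)$ and $\cH(\nu^n|\frm_X)\to\cH(\nu|\frm_X)$. They do not: those results give convergence of the costs and weak convergence of the interpolations, nothing about the marginal entropies. The paper explicitly flags in Section~\ref{sec: approx} that for this approximation the convergence of the marginal entropies is \emph{not} available, and uses instead the ``marginal approximation'' $\mu^M_n,\nu^M_n$ of~\eqref{M:n:def}: for these, $\cH(\mu^M_n|\frm_X)\to\cH(\mu|\frm_X)$ holds by construction (cf.~\eqref{conv:approx:marginals}), and although one only has the one-sided bound $\cC_T(\mu,\nu)\le\liminf_n\cC_T(\mu^M_n,\nu^M_n)$ from Lemma~\ref{M:n:lemma}, that inequality is exactly what is needed to pass~\eqref{eq:Talagrand4} to the limit. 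So the fix is simply to swap approximation schemes; the rest of your proof is sound.
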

\begin{proof} 
Firstly, let us assume \ref{H4} to hold.
 Owing to the bounds $|\nabla_v\log g^T_s|^2\lesssim |\nabla \log g^T_s|^2_{M^{-1}}$ and $|\nabla_v\log f^T_s|^2\lesssim |\nabla \log f^T_s|^2_{N^{-1}}$, from~\eqref{identitycosthalftime} it follows
 \begin{equation*}
  \begin{aligned}\cC_T(\mu,\nu)
  &\leq \cH(\mu^T_0|\frm)+\cH(\mu^T_T|\frm)+\int_0^{\frac{T}{2}} \psi^T(s)\,\De s+\int_{\frac{T}{2}}^T \varphi^T(s)\,\De s\\
  &\overset{\eqref{expmarginals}}{\lesssim}\cH(\mu|\frm_X)+\cH(\nu|\frm_X)+2\,\delta^{-3}\,\cC_T(\mu,\nu)\,e^{-2\kappa\,T}+\int_0^{\frac{T}{2}} \psi^T(s)\,\De s+\int_{\frac{T}{2}}^T \varphi^T(s)\,\De s\,.
\end{aligned} \end{equation*}

We first consider $\int_{T/2}^T \varphi^T(s)\,\De s$. For any $s\in[T/2,\,T]$ from Corollary \ref{prop:T/2} we have 
\begin{equation}\label{nek1}\int_{\frac{T}{2}}^T \varphi^T(s)\,\De s\lesssim \delta^{-3}\,\cC_T(\mu,\nu)\,\int_{\frac{T}{2}}^T e^{-2\kappa\,s}\,\De s\lesssim \delta^{-3}\,\cC_T(\mu,\nu)\,\bigl(e^{-\kappa\,T}-e^{-2\kappa\,T}\bigr)\,.\end{equation}

By reasoning in the same way, this time by using the fact that $s\in[0,T/2]$, we get
\begin{equation}\label{nek2}\int_0^{\frac{T}{2}} \psi^T(s)\,\De s\lesssim \delta^{-3}\,\cC_T(\mu,\nu)\,\int_0^{\frac{T}{2}} e^{-2\kappa\,(T-s)}\,\De s\lesssim \delta^{-3}\,\cC_T(\mu,\nu)\,\bigl(e^{-\kappa\,T}-e^{-2\kappa\,T}\bigr)\,.\end{equation}
Therefore there exists a positive constant $C_{d,\alpha,\beta,\gamma}$ such that
\begin{equation*}\begin{aligned}\cC_T(\mu,\nu)
\leq C_{d,\alpha,\beta,\gamma}\biggl[ \,\cH(\mu|\frm_X)+\cH(\nu|\frm_X)+\delta^{-3}\,\cC_T(\mu,\nu)\,\bigl(e^{-\kappa\,T}-e^{-2\kappa\,T}\bigr)\biggr]\,,
\end{aligned}\end{equation*}
which yields  our thesis as soon as $T>\tempobis$ for a well chosen  $C_{d,\alpha,\beta,\gamma}>0$.

Now, let us prove the result under \ref{H2}. Firstly, notice that we may assume that $\mu$ and $\nu$ satisfy \ref{H3}, otherwise the bound is trivial. The main idea is defining the probability measures $\mu_n^M$ and $\nu_n^M$ on $\bbR^d$, approximating $\mu$ and $\nu$, as the measures whose $\frm_X$-densities are given by
 \begin{equation}\label{M:n:def}\frac{\De\mu^M_n}{\De \frm_X}\coloneqq \left(\frac{\De\mu}{\De \frm_X}\wedge n\right)\,\frac{\IND_{K_n}}{C^\mu_n}\qquad\text{and}\qquad\frac{\De\nu^M_n}{\De \frm_X}\coloneqq \left(\frac{\De\nu}{\De \frm_X}\wedge n\right)\,\frac{\IND_{K_n}}{C^\nu_n}\,,\end{equation}
 where $(K_n)_{n\in\N}$ is an increasing sequence of compact sets in $\R^d$  and $C_n^{\mu},\,C_n^\nu$  are the normalising constants. Then, $\mu^M_n$ and $\nu_n^M$ satisfy \ref{H4} and by means of \eqref{conv:approx:marginals} it follows
 \begin{equation}\label{M:n:entropie}
 \cH(\mu_n^M|\frm_X) \overset{n\to\oo}{\longrightarrow} \cH(\mu|\frm_X)\qquad\text{and}\qquad \cH(\nu_n^M|\frm_X) \overset{n\to\oo}{\longrightarrow} \cH(\nu|\frm_X)\,.
 \end{equation}
 Owing to Lemma \ref{M:n:lemma} and \eqref{eq:Talagrand4} for the approximated $\mu^M_n,\nu^M_n$, we conclude our proof.
\end{proof}

The results given in Theorem \ref{exp marginal entropies} and Lemma \ref{Talagrand4} will come at hand while proving the exponential convergence in~\eqref{good:exp conv}.

\begin{proof}[Proof of Theorem \ref{long-time cost}]
We start with the proof of~\eqref{longcost} and~\eqref{weakconv}. Firstly note that the unique minimizer in \eqref{starimportance}
is given the probability measure $\mu^\oo\coloneqq(\mu\otimes\frm_V)\otimes(\nu\otimes\frm_V)$. Now let us consider  $(T_n)_{n\in\N}$ to be any diverging sequence of positive real times. Then, from the optimality of $\mu^{T_n}$ it follows
 \begin{equation*}\begin{aligned}
 \limsup_{n\to\oo}\cH(\mu^{T_n}|\rmR_{0,T_{n}})\leq \limsup_{n\to\oo}\cH(\mu^\oo|\rmR_{0,T_{n}})
 \overset{\eqref{gammalimsupparticolare}}{\leq}\cH(\mu^\oo|\mathfrak{m}\otimes\mathfrak{m})=\cH(\mu\mid\frm_X)+\cH(\nu\mid\frm_X),
 \end{aligned}\end{equation*}
 which is finite by our assumptions. Then Lemma \ref{gammalemmacoercive} implies that the subsequence $(\mu^{T_n})_{n\in\N}$ is weakly relatively compact.
 Then, from Proposition \ref{propgammaconv}, the Fundamental theorem of $\Gamma$-convergence \cite[Theorem 2.10]{Braideshandbook}, 
the uniqueness of the minimizer in \eqref{starimportance} and from the metrizability of the weak convergence on $\cP(\R^{4d})$ we deduce \eqref{longcost} and \eqref{weakconv}.

We continue with the proof of \eqref{good:exp conv} and of the entropic Talagrand inequality \eqref{gio:talagrand}.
 We firstly assume \ref{H4} to hold. By~\eqref{identitycosthalftime}  and  owing to $|\nabla_v\log g^T_s|^2\lesssim |\nabla \log g^T_s|^2_{M^{-1}}$ and $|\nabla_v\log f^T_s|^2\lesssim |\nabla \log f^T_s|^2_{N^{-1}}$, we know that
 \[\abs{\cC_T(\mu,\nu)- \cH(\mu^T_0|\frm)-\cH(\mu^T_T|\frm)}\lesssim \cH(\mu^T_{\frac{T}{2}}|\frm)+\int_0^{\frac{T}{2}} \psi^T(s)\,\De s+\int_{\frac{T}{2}}^T \varphi^T(s)\,\De s\,,\]
 and from \eqref{nek1}, \eqref{nek2} and the entropic turnpike \eqref{entropic eq} it follows  
 \[\abs{\cC_T(\mu,\nu)- \cH(\mu^T_0|\frm)-\cH(\mu^T_T|\frm)}\lesssim \delta^{-3}\,e^{-\kappa\,T}\,\cC_T(\mu,\nu)+\delta^{-3}\,\cC_T(\mu,\nu)\,e^{-\kappa\,T}\,.\]
 As a byproduct of the above inequality and Theorem \ref{exp marginal entropies} we get 	\begin{equation}\label{exp conv}
	\begin{aligned}
		\abs{\cC_{T}\left(\mu,\nu\right)- \cH\left(\mu|\frm_X\right) -  \cH\left(\nu|\frm_X\right)}\leq C_{d,\alpha,\beta,\gamma}\,\delta^{-3}\,e^{-\kappa\,T}\,\cC_T(\mu,\nu)\, .
	\end{aligned}\end{equation}	
 
 Let us now assume that $\mu$ and $\nu$ satisfy \ref{H3} only. 
 Firstly, consider the approximating sequence $(\mu^{T}_{n})_{n\in\N}$ of the optimizer (cf. \eqref{approx of optimizer} and \eqref{n:conv:new}). Then, by means of \eqref{exp conv} under \ref{H4} and the lower semicontinuity of the relative entropy  we have
 \begin{equation*}
  \begin{aligned}\cH&\left(\mu|\frm_X\right) +  \cH\left(\nu|\frm_X\right)-
   \cC_{T}\left(\mu,\nu\right)\overset{\eqref{n:conv:new}}{\leq} \liminf_{n\to\oo}\biggl[\cH\left(\mu^n|\frm_X\right) +  \cH\left(\nu^n|\frm_X\right)-
   \cC_{T}\left(\mu^n,\nu^n\right)\biggr]\\
   \overset{\eqref{exp conv}}{\lesssim}&\delta^{-3}e^{-\kappa\,T} \liminf_{n\to\oo}\cC_T(\mu^n,\nu^n)=\delta^{-3}e^{-\kappa\,T}\,\cC_T(\mu,\nu)\overset{\eqref{eq:Talagrand4}}{\lesssim}\,\delta^{-3}\,e^{-\kappa\,T}\,\biggl[ \cH\left(\mu|\frm_X\right)+\cH\left(\nu|\frm_X\right)\biggr]\,.
  \end{aligned}
 \end{equation*}

For the other bound we are going to use the approximation on the marginals (cf. \eqref{M:n:def}). Therefore, let us consider $\mu^M_n,\,\nu^M_n$ such that \ref{H4} holds. Then, from Lemma \ref{M:n:lemma}  and the convergence of the relative entropies in \eqref{M:n:entropie}, we get
 \begin{equation*}
  \begin{aligned} \cC_{T}&\left(\mu,\nu\right)-\cH\left(\mu|\frm_X\right) - \cH\left(\nu|\frm_X\right)\leq\liminf_{n\to\oo}\biggl[\cC_T(\mu^M_n,\nu^M_n)-\,\cH(\mu^M_n|\frm_X)-\cH(\bar\nu_n^M|\frm)\biggr]\\
  \overset{\eqref{exp conv}}{\lesssim}&\,\delta^{-3}\,e^{-\kappa\,T}\, \liminf_{n\to\oo}\cC_T\left(\mu^M_n,\nu^M_n\right)
  \overset{\eqref{eq:Talagrand4}}{\lesssim}\delta^{-3}\,e^{-\kappa\,T}\, \liminf_{n\to\oo}\,\biggl[ \cH\left(\mu^M_n|\frm_X\right)+\cH\left(\nu^M_n|\frm_x\right)\biggr]\\
  \,&\qquad\qquad\qquad\qquad\qquad\qquad\qquad=\,\delta^{-3}\,e^{-\kappa\,T}\,\biggl[ \cH\left(\mu|\frm_X\right)+\cH\left(\nu|\frm_X\right)\biggr]\,.
\end{aligned} \end{equation*}
 \end{proof}

\subsection{Corrector estimates for~\ref{KFSP} and proof of Theorems~\ref{full:teolongcost} and~\ref{full:teo entropic turnpike}}
In this section we collect results in the kinetic-full setting analogous to the ones already presented for~\ref{KSP}. We omit the proofs since the arguments are very similar and do not present any new difficulty with respect to the~\ref{KSP} case.

Let us start by mentioning that also in this case~\ref{KFSP} and~\ref{KFSPd} admit unique solutions $\bar{\mu}^T,\,\bar{\rmP}^T$ with $\bar{\mu}^T= ((X_0,V_0),(X_T,V_T))_\# \bar{\rmP}^T$ which can be decomposed as
 \begin{equation}\label{fg full definition}
  \bar\rho^T(x,v,y,w)\coloneqq\frac{\De \bar \mu^{T}}{\De \rmR_{0,T}}(x,v,y,w)=\bar{f}^T(x,v)\bar{g}^T(y,w)\qquad\rmR_{0,T}\textrm{-a.s.}
 \end{equation}
 where $\bar{f}^T,\,\bar{g}^T$ are two non-negative  measurable functions on $\R^{2d}$ that solve the Schr\"odinger system
 \begin{equation}\label{FSSp}
\begin{cases}
\frac{\De\bar\mu}{\De\frm}(x,v)  =\bar{f}^T(x,v) \,\bbE_\rmR\big[\bar{g}^T(X_T,V_T)|X_0=x,\,V_0=v \big]\,,\\
\frac{\De\bar\nu}{\De\frm}(y,w)  =\bar{g}^T(y,w)\,\bbE_\rmR\big[\bar{f}^T(X_0,V_0)|X_T=y,\,V_T=w \big]\,.
\end{cases}
\end{equation}
Note that in this case $f$ and $g$ are function of  both space and velocity. Moreover, if we define for any $t\in[0,T]$
\begin{equation*}
	\bar{f}^T_t := P^\ast_t \bar{f}^T\qquad\text{and}\qquad \bar{g}_t^T := P_{T-t} \bar{g}^T\,,
\end{equation*}
 then $\bar{\mu}^T_t = (X_t,\,V_t)_\# \bar{\rmP}^T$ can be written  as $
	\bar{\mu}^T_t = \bar{f}^T_t \bar{g}^T_t \frm$ and, similarly to~\eqref{identitycosthalftime}, under~\ref{H1} and~\ref{FH4} it holds that, for any $t\in[0,T]$
 \begin{equation}\label{full:identitycosthalftime}
  \begin{aligned}\cC_T^F\left(\bar\mu,\bar\nu\right)=\,\cH(\bar\mu|\frm)&\,+\cH(\bar\nu|\frm)-\cH(\bar{\mu}^T_{t}|\frm)\\
  &\,+\int_0^{t} \int_{\R^{2d}}\Gamma(\log \bar{g}^T_s)\bar{\rho}^T_s\,\De\frm\,\De s+\int_{t}^T \int_{\R^{2d}}\Gamma(\log \bar{f}^T_s)\bar{\rho}^T_s\,\De\frm\,\De s\,.
\end{aligned} \end{equation}

We can therefore define the \emph{correctors} as the functions $\bar{\varphi}^T,\bar{\psi}^T\colon [0,T]\to\R$ given by
 \begin{equation}
    \bar{\varphi}^T(s) \coloneqq \int_{\R^{2d}}|\nabla \log \bar{f}_s^T|^2_{N^{-1}} \bar{\rho}_s^T \,\De \frm \quad\mbox{and}\quad \bar{\psi}^T(s) \coloneqq \int_{\R^{2d}} |\nabla \log \bar{g}_s^T|^2_{M^{-1}} \bar{\rho}_s^T \,\De \frm\,,
\end{equation}
where $M,\,N\in \R^{2d\times 2d}$ are positive definite symmetric matrices as appearing in Proposition~\ref{prop:contraction}. In the next result we collect all the contraction properties satisfied by the above correctors, which correspond to the ones proven for~\ref{KSP} in Lemma~\ref{eq:correctors}, Proposition~\ref{gio:prop:T/2}, Proposition~\ref{prop: regularizing effect} and Corollary~\ref{prop:T/2}.
\begin{lemma}\label{full:omnicomprensivo:correctors}
Grant~\ref{H1},~\ref{H2},~\ref{FH4} and fix $\delta\in(0,1]$. For any $0 < t \leq s \leq T$ it holds
\begin{equation*}
	\bar{\varphi}^T(s) \leq \bar{\varphi}^T(t) e^{-2\kappa\, (s-t)}\quad\mbox{and}\quad \bar{\psi}^T(T-s) \leq \bar{\psi}^T(T-t) e^{-2\kappa\, (s-t)}\qquad\forall\,0 < t \leq s \leq T\,.
\end{equation*}
Moreover, for any fixed $\delta\in(0,1]$ as soon as $T>\tempo$ the followings hold true
\begin{subequations}
\begin{equation}
	\bar\varphi^T(t) \lesssim \,e^{-2\kappa t}\,\left[\cI\left(\bar\mu^T_\delta\right)+\cI\left(
	\bar\mu^T_{T-\delta}\right)\right]\mbox{ and }\, \bar\psi^T(T-t)\lesssim \,e^{-2 \kappa t}\,\left[\cI\left(\bar\mu^T_\delta\right)+\cI\left(
	\bar\mu^T_{T-\delta}\right)\right]\,\forall t\in[\delta,T]\,,
	\end{equation}
	\begin{equation}
	\cI(\bar \mu^T_t) \lesssim \delta^{-3} \Big(\cC^F_T(\bar\mu,\bar\nu) - \cH(\bar\mu^T_t\mid \frm)\Big)\quad\forall t\in[\delta, T-\delta]\,,    
	\end{equation}
	\begin{equation}\label{full:halfcorrectors}
\bar{\varphi}^T(t) \lesssim \delta^{-3}\,e^{-2\kappa \,t}\,\cC_T^F\left(\bar\mu,\bar\nu\right)\text{ and }\,  \bar{\psi}^T(T-t)\lesssim \delta^{-3}\,e^{-2\kappa \,t}\,\cC_T^F\left(\bar\mu,\bar\nu\right)\quad\forall t\in[\delta,T]\,.
\end{equation}
	\end{subequations}
\end{lemma}
\begin{proof}[Proof of Theorem~\ref{full:teo entropic turnpike}]
The proof of the result under~\ref{FH4} follows the same reasoning presented in the first part of the proof of Theorem~\ref{teo entropic turnpike} and for this reason is omitted. An approximating argument akin to the one in the proof of Theorem~\ref{teo entropic turnpike}, this time considering the full marginals $\bar\mu^n,\bar\nu^n$ and the corresponding \ref{KFSP}, gives \begin{equation*}\bar\mu^{n,T}_t\weakto\bar\mu^T_t\qquad\text{ and }\qquad\cC_T^F\left(\bar\mu^n,\bar\nu^n\right)\to\cC_T^F\left(\bar\mu,\bar\nu\right)\,.\end{equation*}
Therefore the first two bounds follow from the lower semicontinuity of $\cI(\cdot)$ and $\cH(\cdot|\frm)$.
Finally, \eqref{bis:full:entropic eq} follows from \eqref{full:entropic eq} by means of \eqref{eq:full:Talagrand4} presented below.
\end{proof}

\begin{proof}[Proof of Theorem \ref{full:teolongcost}]
The proof of~\eqref{full:longcost} and~\eqref{full:weakconv} runs similarly to the one given above in the kinetic setting and for this reason it is omitted. The main difference is that in this case the equicoerciveness is not needed since we have the weak compactness of $\Pi\left(\bar\mu,\bar\nu\right)$.

 We now discuss~\eqref{good:full:exp conv} and \eqref{gio:full:talagrand}. With similar arguments as for~\ref{KSP} one can show that under~\ref{H1} and~\ref{H2}, there exists a constant $C_{d,\alpha,\beta,\gamma}>0$ such that  for any $0<\delta\leq1$ and $T>(\tempo)\vee\bigl(\tempobis\bigr)$ it holds
\begin{equation}\label{eq:full:Talagrand4}\cC_T^F\left(\bar\mu,\bar\nu\right)\leq C_{d,\alpha,\beta,\gamma}\, \biggl[\cH(\bar\mu|\frm)+\cH(\bar\nu|\frm)\biggr]\,.\end{equation}
Further, by means of
\eqref{full:identitycosthalftime}, the corrector estimates \eqref{full:halfcorrectors} and the turnpike estimate~\eqref{full:entropic eq}, at least under~\ref{FH4}, it follows  that  for any $0<\delta\leq1$,  as soon as $T>(\tempo)\vee\tempobis$,	\begin{equation*}
	    \abs{\cC^F_{T}\left(\bar\mu,\bar\nu\right)- \cH\left(\bar\mu|\frm\right) -  \cH\left(\bar\nu|\frm\right)} \leq C_{d,\alpha,\beta,\gamma}\,\delta^{-3}\,e^{-\kappa\,T}\,\biggl[ \cH\left(\bar\mu|\frm\right)+\cH\left(\bar\nu|\frm\right)\biggr]\, ,
\end{equation*}	
and from this immediately deduce~\eqref{gio:full:talagrand}. The extension to~\ref{FH3} is a consequence of a standard approximation argument.
\end{proof}

\subsection{Convergence over a fixed time-window}

In this section we show in Theorem~\ref{4:wasser turnpike} that the  entropic interpolations for \ref{KSP} and \ref{KFSP} enjoy a turnpike property  with respect to the Wasserstein distance. 

 Notice that a turnpike property in the Wasserstein distance could be deduced from the entropic one (cf.\ Theorem~\ref{teo entropic turnpike} and~\ref{full:teo entropic turnpike}) by means of the Talagrand inequality~\eqref{talagrand}. However, below we provide a different proof that is of independent interest for two reasons. Firstly, the inequality below holds for any $t\in[0,T]$, while the entropic turnpike is restricted to the sub-interval $[\delta,T-\delta]$. Secondly, the argument in the proof, which uses the optimal control formulation of the Schr\"odinger problem, will be instrumental for the study of the short-time behaviour of the Schr\"odinger bridge.
\begin{theorem}[Wasserstein turnpike]\label{4:wasser turnpike}
 Under hypotheses \ref{H1},~\ref{H2} and~\ref{H3}, there exists a positive constant $C_{d,\alpha,\beta,\gamma}$ such that for any $0<\delta\leq1$, as soon as $T>\tempo$, for any $t\in[0,T]$ it holds
 \begin{equation*}
  \cW_2(\mu^T_{t},\,\frm)\leq C_{d,\alpha,\beta,\gamma} \,\delta^{-\frac{3}{2}}\,e^{-\kappa\,[t\wedge(T-t)]}\,\sqrt{\cC_T(\mu,\nu)}\,.
 \end{equation*}
\end{theorem}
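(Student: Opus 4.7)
The approach leverages the stochastic control representation~\eqref{eq: KSP admissible P} of the Schr\"odinger bridge together with a synchronous coupling to the stationary Langevin dynamics. Under $\rmP^T$ the canonical process $(X_t, V_t)$ evolves with optimal Markovian control $\alpha_t = 2\gamma\,\nabla_v \log g^T_t(X_t, V_t)$. Let $(\tilde X_t, \tilde V_t)$ denote the stationary solution of~\eqref{langevin} driven by the same Brownian motion, with $(\tilde X_0, \tilde V_0) \sim \frm$ and the coupling of initial data chosen as an optimal $\cW_{M,2}$-coupling of $\mu^T_0$ and $\frm$, where $M$ is the matrix from Proposition~\ref{prop:contraction}. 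Since the synchronous coupling kills the Brownian term, $D_t \coloneqq (X_t - \tilde X_t, V_t - \tilde V_t)$ solves a noiseless ODE, and the infinitesimal contraction~\eqref{eq:contR} applied along the segment joining $(X_t,V_t)$ to $(\tilde X_t, \tilde V_t)$ yields
\begin{equation*}
\frac{\De}{\De t} |D_t|_M^2 \leq -2\kappa\,|D_t|_M^2 + 2\langle D_t, M(0,\alpha_t)\rangle\,,
\end{equation*}
which, by Cauchy-Schwarz and a standard Gronwall argument, integrates to $|D_t|_M \leq e^{-\kappa t}|D_0|_M + C\int_0^t e^{-\kappa(t-s)}|\alpha_s|\,\De s$.

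Fix $t \in [0, T/2]$. Squaring, using Cauchy-Schwarz on the convolution and taking expectation gives
\begin{equation*}
\bbE|D_t|_M^2 \lesssim e^{-2\kappa t}\,\bbE|D_0|_M^2 + \int_0^t e^{-\kappa(t-s)}\,\bbE|\alpha_s|^2\,\De s\,.
\end{equation*}
Equivalence of norms together with Talagrand's inequality~\eqref{talagrand} and the bound $\cH(\mu^T_0|\frm) \leq \cH(\rmP^T|\rmR) = \cC_T(\mu,\nu)$ yield $\bbE|D_0|_M^2 \lesssim \cC_T(\mu,\nu)$. Moreover $\bbE|\alpha_s|^2 = 4\gamma^2 \int |\nabla_v \log g^T_s|^2 \De\mu^T_s \lesssim \psi^T(s)$, and Corollary~\ref{prop:T/2} furnishes $\psi^T(s) \lesssim \delta^{-3} e^{-2\kappa(T-s)}\cC_T(\mu,\nu)$ for $s \in [0, T-\delta] \supset [0, T/2]$ (this range is nonempty under the hypothesis on $T$). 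A direct computation of the convolution $\int_0^t e^{-\kappa(t-s)} e^{-2\kappa(T-s)}\,\De s \lesssim e^{-2\kappa(T-t)}$, combined with the decay of the initial term, yields
\begin{equation*}
\cW_2^2(\mu^T_t, \frm) \leq \bbE|D_t|^2 \lesssim \bbE|D_t|_M^2 \lesssim \delta^{-3}\bigl(e^{-2\kappa t} + e^{-2\kappa(T-t)}\bigr)\cC_T(\mu,\nu) \lesssim \delta^{-3} e^{-2\kappa t}\cC_T(\mu,\nu)\,,
\end{equation*}
giving the claim on $[0, T/2]$ upon taking square roots.

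For $t \in [T/2, T]$, the physical reversibility~\eqref{physrever} implies that joint time-reversal and velocity flip map the bridge with marginals $(\mu,\nu)$ to the bridge with marginals $(\nu,\mu)$ while preserving the Langevin reference. Applying the preceding argument to this dual bridge, or equivalently replacing~\eqref{contrsem} by the adjoint estimate~\eqref{contrsemadj} of Proposition~\ref{prop:contraction}(ii) and using the corrector bound on $\varphi^T$ from Corollary~\ref{prop:T/2}, produces the same estimate with $t$ replaced by $T-t$. To remove the auxiliary assumption~\ref{H4} used in Corollary~\ref{prop:T/2}, the approximation $(\mu^n, \nu^n)$ of~\eqref{approx of optimizer} yields $\cC_T(\mu^n,\nu^n) \to \cC_T(\mu,\nu)$ and $\mu^{n,T}_t \weakto \mu^T_t$; lower semicontinuity of $\cW_2$ together with uniform second-moment control (Talagrand's inequality applied to the approximating marginals) extends the bound to~\ref{H3}. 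The main obstacle is the rigorous identification of the optimal Markovian control as $\alpha_t = 2\gamma\nabla_v \log g^T_t$ together with the Gronwall step near the endpoints $\{0,T\}$, where $\log g^T_t$ may fail to be square-integrable against $\mu^T_t$; a natural workaround is to run the coupling first on $[\varepsilon, T-\varepsilon]$, exploiting the regularising effect of Proposition~\ref{prop: regularizing effect}, and then let $\varepsilon \downarrow 0$ via Fatou and weak continuity of $t \mapsto \mu^T_t$.
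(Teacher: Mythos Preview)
Your proof is correct and follows essentially the same strategy as the paper: synchronous coupling of the controlled and uncontrolled Langevin dynamics, the infinitesimal contraction \eqref{eq:contR} in the twisted norm $|\cdot|_M$, the corrector bound of Corollary~\ref{prop:T/2} to estimate the control, and the approximation of Section~\ref{sec: approx} to relax \ref{H4} to \ref{H3}.

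There is one organisational difference worth noting. The paper couples the Schr\"odinger bridge to the \emph{uncontrolled} Langevin dynamics started at the \emph{same} initial law $\mu^T_0$ (so that $D_0=0$), drops the $-\kappa$ term in the Gronwall inequality, and then recovers the $e^{-\kappa t}$ decay from the initial data through the separate triangle-inequality step $\cW_{M,2}(\tilde\mu^T_t,\frm)\leq e^{-\kappa t}\cW_{M,2}(\mu^T_0,\frm)$. You instead couple directly to the \emph{stationary} dynamics via an optimal initial coupling and retain the exponential kernel $e^{-\kappa(t-s)}$ in the Gronwall step, which lets you handle both the initial-data decay and the control perturbation in a single estimate. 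Both arrangements lead to the same bound; your version is slightly more compact, while the paper's version isolates the intermediate estimate \eqref{eq:short time} that is reused verbatim in the proof of Theorem~\ref{short time}. Your appeal to physical reversibility for $t\in[T/2,T]$ is the correct reading of the paper's one-line ``contraction along $P^\ast$'' remark, and the concerns you raise about endpoint regularity do not arise under \ref{H4}, which is the setting in which the coupling argument is actually carried out.
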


\begin{proof} Let us firstly assume \ref{H4}. We we will prove our result for the distorted Wasserstein distance $\cW_{M,2}$ induced by the metrics $\abs{\cdot}_M$. Fix $\delta\in(0,1)$ and assume $t\in[0,T-\delta]$.
Define $\tilde \mu^T_\cdot$ as the marginal flow generated by the uncontrolled process $Z^{\bm 0,T}_s\coloneqq (X^{\bm 0,T}_s,\,V^{\bm 0,T}_s)_{s\in[0,T]}$ solution of \eqref{langevin} started at the initial distribution $\mu^T_0\in\cP(\R^{2d})$.
 Then, since $\tilde \mu^T_0=\mu^T_0$, it holds
\begin{equation}\label{popo}
 \cW_{M,2}(\mu^T_{t},\frm)\leq\cW_{M,2}(\mu^T_{t},\tilde\mu^T_{t})+\cW_{M,2}(\tilde\mu^T_{t},\frm)\overset{\eqref{BEcontW}}{\leq} \cW_{M,2}(\mu^T_{t},\tilde\mu^T_{t})+ e^{-\kappa\,t}\,\cW_{M,2}(\mu^T_0,\frm)\,,
\end{equation}
The second term in the right hand side can be handled with the Talagrand inequality:
\begin{equation*}
 \cW_{M,2}(\mu^T_0,\frm)\lesssim \cW_2(\mu^T_0,\frm)\overset{\eqref{talagrand}}{\lesssim} \sqrt{\cH(\mu^T_0|\frm)}\leq \,\sqrt{\cC_T(\mu,\nu)}\,,
\end{equation*}
where the last step holds since $\cC_T(\mu,\nu)\geq \cH\left((\mathrm{proj}_{x_1})_{\#}\mu^T|(\mathrm{proj}_{x_1})_{\#}\rmR_{0,T}\right)$.

 Let us now focus on $\cW_{M,2}(\mu^T_{t},\tilde\mu^T_{t})$. We will use a synchronous coupling between these two measures. Therefore we introduce
  the process $Z^{\bm u,T}_s\coloneqq (X^{\bm u,T}_s,\,
    V^{\bm u,T}_s)\sim\mu^T_s
$, i.e.\ the solution of \eqref{eq: KSP admissible P} (driven by the same Brownian motion for $Z^{\bm 0,T}_s$) when considering the control ${\bm u}_s=2\gamma\,\nabla_v \log g^T_s(X_s^{\bm u,T},V^{\bm u,T}_s)$. Particularly, from \eqref{pde fg} it follows that $\bm u$ is the optimal control and $Z^{\bm u,T}_s\sim\mu^T_s$. 
For notation's sake set $Z^{\bm \Delta, T}_s\coloneqq Z^{\bm u,T}_s-Z^{\bm 0,T}_s$. Then it holds
\[\De Z^{\bm \Delta,T}_s=\biggl[b\left(Z^{\bm u,T}_s\right)-b\left(Z^{\bm 0,T}_s\right)\biggr]\De s+\begin{pmatrix} 0\\ \bm u_s\end{pmatrix}\De s\,,\]
where $b(z)$ denotes the drift of the Langevin dynamics \eqref{langevin}.
By  It\^o's Formula we obtain
\begin{equation*}
 \begin{aligned}
  \De \abs{Z^{\bm \Delta,T}_s}_M^2&=2MZ^{\bm \Delta}_s\cdot\Bigl(b(Z^{\bm u}_s)-b(Z^{\bm 0}_s)\Bigr)\De s + 2MZ^{\bm \Delta}_s\cdot\begin{pmatrix}0\\ \bm u_s                                                                      \end{pmatrix}\De s\\
&= 2\int_0^1 Z^{\bm \Delta,T}_s\cdot MJ_b\left(rZ^{\bm u,T}_s+(1-r)Z^{\bm 0,T}_s\right)Z^{\bm \Delta,T}_s\De r\,\De s+2MZ^{\bm \Delta,T}_s\cdot\begin{pmatrix}0\\ \bm u_s                                                                      \end{pmatrix}\De s\\
&\leq -2\,\kappa\abs{Z^{\bm \Delta,T}_s}_M^2\De s+2MZ^{\bm \Delta,T}_s\cdot\begin{pmatrix}0\\ \bm u_s                                                                      \end{pmatrix}\De s\,,
 \end{aligned}
\end{equation*}
where the last inequality follows from~\eqref{eq:contR}.
By taking the expectation, and applying H\"older's inequality we get
\begin{equation*}\begin{aligned}\frac{\De}{\De s}\,\Rexpect{\abs{Z^{\bm \Delta,T}_s}_M^2}\leq -2\,\kappa\,\Rexpect{\abs{Z^{\bm \Delta,T}_s}_M^2}+2\Rexpect{\abs{Z^{\bm \Delta,T}_s}_M^2}^{\frac{1}{2}}\Rexpect{\abs{(0,\bm{u_s})^T                                             
}^2_M}^{\frac{1}{2}}\,.
\end{aligned}\end{equation*}
Therefore it holds
\begin{equation*}\frac{\De}{\De s}\sqrt{\Rexpect{{\big|Z^{\bm \Delta,T}_s\big|}_M^2}}\,\leq-\kappa\,\sqrt{\Rexpect{{\big|Z^{\bm \Delta,T}_s\big|}_M^2}} +\Rexpect{\abs{(0,\bm{u_s})^T                                             
}^2_M}^{\frac{1}{2}}\,.
\end{equation*}\
Recalling that the optimal control is given by $\bm u_s=2\gamma\,\nabla_v \log g^T_s(X_s^{\bm u,T},V^{\bm u,T}_s)$ we obtain that
\begin{equation*}\frac{\De}{\De s}\sqrt{\Rexpect{{\big|Z^{\bm \Delta,T}_s\big|}_M^2}}\,\lesssim \left(\int_{\R^{2d}}\abs{\nabla_v \log g^T_s}^2\rho^T_s\,\De \frm\right)^{\frac{1}{2}}\lesssim \psi^T(s)^{\frac{1}{2}}\,.\end{equation*}
Therefore, by integrating over $s\in[0,\,t]$ we get
 \begin{equation*}
  \begin{aligned}
   &\sqrt{\Rexpect{{\big|Z^{\bm \Delta,T}_t\big|}_M^2}}\,=\int_0^{t} \frac{\De}{\De s}\sqrt{\Rexpect{{\big|Z^{\bm \Delta,T}_s\big|}_M^2}}\,\De s \overset{\eqref{halfcorrectors}}{\lesssim} \,\delta^{-\frac{3}{2}}\,e^{-\kappa\,(T-t)}\,\sqrt{\cC_T(\mu,\nu)}\,,
  \end{aligned}
 \end{equation*}
and hence it holds
\begin{equation}\label{eq:short time}\cW_{M,2}(\mu^T_{t},\tilde\mu^T_{t})\lesssim\delta^{-\frac{3}{2}}\,e^{-\kappa\,(T-t)}\,\sqrt{\cC_T(\mu,\nu)}\,.\end{equation}
Then, from \eqref{popo} we deduce that for any $t\in[0,T-\delta]$ it holds
\begin{equation*}
 \begin{aligned}
  \cW_{M,2}(\mu^T_{t},\frm)\lesssim&\,\delta^{-\frac{3}{2}}\,e^{-\kappa\,(T-t)}\,\sqrt{\cC_T(\mu,\nu)}+e^{-\kappa\,t}\,\sqrt{\cC_T(\mu,\nu)}\\
  \lesssim &\,\delta^{-\frac{3}{2}}\,e^{-\kappa\,[t\wedge(T-t)]}\,\sqrt{\cC_T(\mu,\nu)}\,.
 \end{aligned}
\end{equation*}
By considering the contraction along $P^\ast$, the same argument gives us the same bound for $t\in[\delta,T]$ and therefore on the whole domain $[0,T]$.

In order to relax the assumption to \ref{H3}, it is enough to consider once again the approximation of the optimizer (as in the proof of Theorem \ref{teo entropic turnpike}) together with the lower semicontinuity of the Wasserstein distance.
\end{proof}

The previous argument can also be applied in order to prove Theorem \ref{short time}.
\begin{proof}[Proof of Theorem \ref{short time}] At first, let us assume~\ref{H4}. We have
\begin{equation}\label{appo in short}\begin{aligned}
    \cW_2\left(\mu^T_t,\mu^\oo_t\right)\leq&\, \cW_2\left(\mu^T_t,\tilde\mu^T_t\right)+\cW_2\left(\tilde\mu^T_t,\mu^\oo_t\right)\\
    \overset{\eqref{eq:short time},\eqref{BEcontW}}{\lesssim}&\, \delta^{-\frac{3}{2}}\,e^{-\kappa\,(T-t)}\,\sqrt{\cC_T(\mu,\nu)}+e^{-\kappa\, t}\,\cW_2\left(\mu^T_0,\mu\otimes\frm_V\right)\,,
\end{aligned}\end{equation}
where $\tilde\mu^T_\cdot$ is the marginal flow defined in the previous proof, i.e. the flow generated by the uncontrolled process $(X^{\bm 0,T}_s,\,V^{\bm 0,T}_s)_{s\in[0,T]}$  started at the initial distribution $\mu^T_0\in\cP(\R^{2d})$.
Using the inequality
\begin{equation*}
    \cW_2\left(\mu^T_0,\mu\otimes\frm_V\right)^2 \leq \int_{\R^d}\cW_2\left(\mu^T_0(\cdot|x),\frm_V\right)^2 \De\mu(x)\,,
\end{equation*}
applying Talagrand's inequality for $\frm_V$ and using the additivity of relative entropy \cite[Formula A.8]{LeoSch} we obtain
\begin{equation*}
\begin{aligned}
    \cW_2\left(\mu^T_0,\mu\otimes\frm_V\right)^2\leq&\,2\, \int_{\R^d}\cH\left(\mu^T_0(\cdot|x)|\frm_V\right) \De\mu(x)
    =2\,\cH\left(\mu^T_0|\mu\otimes\frm_V\right)\\
     =&\,2\,\cH(\mu^T_0|\frm)-2\int_{\R^{2d}}\log\frac{\De\left(\mu\otimes \frm_V\right)}{\De \frm}\,\De\mu^T_0
     =2\,\cH(\mu^T_0|\frm)-2\int_{\R^{d}}\log\frac{\De\mu}{\De \frm_X}\,\De\mu\\
     =&\,2\,\cH(\mu^T_0|\frm)-2\,\cH(\mu|\frm_X)\overset{\eqref{expmarginals}}{\lesssim}\delta^{-3}\,\cC_T(\mu,\nu)\,e^{-2\kappa\, T}\,.
\end{aligned}\end{equation*}
By combining the above inequalities with \eqref{appo in short} we get our result.

The extension of the result to the weaker~\ref{H3} follows from the same approximating argument discussed in the previous proof.
\end{proof}

With a similar reasoning one can prove that the Wasserstein turnpike holds also for  KFSP under~\ref{FH3}. Notice that since in this setting we fix the whole marginals at time $0$ and $T$, it holds $\bar{\mu}^T_0=\bar \mu$ and $\bar{\mu}^T_T=\bar\nu$ and therefore in this case we do not need a result similar to Theorem \ref{exp marginal entropies}. Therefore we have the following

\begin{theorem}[Wasserstein turnpike]\label{4:full:wasser turnpike}
 Under hypotheses \ref{H1},\ref{H2} and \ref{FH3}, there exists a positive constant $C_{d,\alpha,\beta,\gamma}$ such that for any $0<\delta\leq1$, as soon as $T>\tempo$, for any $t\in[0,T]$ it holds
 \begin{equation*}
  \cW_2(\bar{\mu}^T_{t},\,\frm)\leq C_{d,\alpha,\beta,\gamma} \,\delta^{-\frac{3}{2}}\,e^{-\kappa\,[t\wedge(T-t)]}\,\sqrt{\cC_T^F\left(\bar\mu,\bar\nu\right)}\,.
 \end{equation*}
\end{theorem}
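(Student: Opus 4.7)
The plan is to mimic the proof of Theorem \ref{4:wasser turnpike} almost verbatim, using the fact (noted in Remark \ref{back}) that the KFSP setting is actually simpler in one respect: the entropic interpolation satisfies $\bar{\mu}^T_0=\bar\mu$ and $\bar{\mu}^T_T=\bar\nu$ exactly, so no analogue of Theorem \ref{exp marginal entropies} is needed, and the data-processing inequality yields directly $\cH(\bar\mu|\frm),\cH(\bar\nu|\frm)\leq \cC^F_T(\bar\mu,\bar\nu)$. As in the KSP case, I would first work under \ref{FH4} and then remove this assumption by the approximation argument used for Theorem \ref{full:teo entropic turnpike}, invoking the weak lower semicontinuity of $\cW_2$ and the convergence $\cC^F_T(\bar\mu^n,\bar\nu^n)\to \cC^F_T(\bar\mu,\bar\nu)$.

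Fix $t\in[0,T-\delta]$ and introduce the auxiliary flow $\tilde{\bar\mu}^T_s$ generated by the Langevin dynamics \eqref{langevin} initialised at $\bar\mu$. Working in the distorted distance $\cW_{M,2}$ from Proposition \ref{prop:contraction} and applying the triangle inequality, the contraction \eqref{BEcontW} together with Talagrand's inequality \eqref{talagrand} yields
\begin{equation*}
 \cW_{M,2}(\tilde{\bar\mu}^T_t,\frm)\leq e^{-\kappa t}\cW_{M,2}(\bar\mu,\frm)\lesssim e^{-\kappa t}\sqrt{\cH(\bar\mu|\frm)}\leq e^{-\kappa t}\sqrt{\cC^F_T(\bar\mu,\bar\nu)}\,.
\end{equation*}
For the remaining term $\cW_{M,2}(\bar\mu^T_t,\tilde{\bar\mu}^T_t)$ I would use a synchronous coupling: by \eqref{full:pde fg} the interpolation $\bar\mu^T_s$ is the law of the controlled process $Z^{\bm u,T}_s$ with optimal feedback $\bm u_s=2\gamma\,\nabla_v\log \bar g^T_s(Z^{\bm u,T}_s)$, and we couple it with the uncontrolled $Z^{\bm 0,T}_s$ sharing the same Brownian motion and the same initial condition. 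It\^o's formula for $|Z^{\bm u,T}_s-Z^{\bm 0,T}_s|^2_M$, the Jacobian estimate \eqref{eq:contR} and Cauchy--Schwarz on the control term produce
\begin{equation*}
 \frac{d}{ds}\sqrt{\bbE[|Z^{\bm u,T}_s-Z^{\bm 0,T}_s|^2_M]}\lesssim \sqrt{\bar\psi^T(s)}\,,
\end{equation*}
exactly as in the KSP case. Integrating on $[0,t]$ and plugging in the corrector bound $\bar\psi^T(s)\lesssim \delta^{-3} e^{-2\kappa(T-s)}\cC^F_T(\bar\mu,\bar\nu)$ from Corollary \ref{full:prop:T/2} yields $\cW_{M,2}(\bar\mu^T_t,\tilde{\bar\mu}^T_t)\lesssim \delta^{-3/2}e^{-\kappa(T-t)}\sqrt{\cC^F_T(\bar\mu,\bar\nu)}$, which combines with the previous display to prove the claim on $[0,T-\delta]$.

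The symmetric bound on $[\delta,T]$ follows by reversing time: work in $\cW_{N,2}$, initialise the auxiliary uncontrolled flow at $\bar\nu$, propagate by the adjoint semigroup $P^\ast$ using \eqref{BEcontstarW}, and employ the corrector $\bar\varphi^T$ (again via Corollary \ref{full:prop:T/2}) in place of $\bar\psi^T$; the physical reversibility \eqref{physrever} guarantees that the feedback for the time-reversed interpolation takes the same structural form. Taking the minimum of the two bounds yields the exponential factor $e^{-\kappa[t\wedge(T-t)]}$ on all of $[0,T]$, and the condition $T>\tempo$ is exactly what is required for Corollary \ref{full:prop:T/2} to apply. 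I do not foresee any genuine obstacle here: the technical heart of the argument (the corrector estimate) is already proved, and the coupling computation is a verbatim transcription of the KSP case, with the simplification that $\bar{\mu}^T_0=\bar\mu$ makes the initial-time term of the triangle inequality trivial rather than requiring an analogue of Theorem \ref{exp marginal entropies}.
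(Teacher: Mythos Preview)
Your proposal is correct and matches the paper's approach exactly: the paper itself merely states that the argument of Theorem~\ref{4:wasser turnpike} carries over verbatim to KFSP, with the simplification that $\bar\mu^T_0=\bar\mu$ and $\bar\mu^T_T=\bar\nu$ makes an analogue of Theorem~\ref{exp marginal entropies} unnecessary, and you have spelled out precisely this. The use of Corollary~\ref{full:prop:T/2} under \ref{FH4} followed by the approximation argument to reach \ref{FH3} is also what the paper intends.
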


\section{From compact support to finite entropy}\label{sec: approx}
In this section we discuss two types of approximating sequences that we have used in order to extend our main results from \ref{H4} to \ref{H3}. 

In Section~\ref{subsec: approx opt} we deal with the \emph{Approximation of the optimizer} where we are able to prove the convergence of the entropic cost of the approximated problem to the original entropic cost but not the convergence of the associated entropies of the marginals at time $t=0,T$.
On the other hand in Section~\ref{subsec: approx marg}, we investigate the \emph{Approximation of the marginals}, by approximating directly the marginals and consider the associated Sch\"rodinger problems. In this case, we get the convergence of the marginals' relative entropies, but not the one of the entropic cost.
The two aforementioned strategies produce complementary bounds which can be applied together in order to relax the assumptions from \ref{H4} to \ref{H3}.

We will deal exclusively with the approximations and proofs for \ref{KSP} and omit those for \ref{KFSP}, since the latter can be treated in the same way.

 \subsection{Approximating the optimizer}\label{subsec: approx opt}
 
Fix a couple of marginals $\mu,\,\nu\in\cP(\R^{d})$ satisfying \ref{H3}. We already know that there exists a unique minimizer $\mu^{T}\in\Pi_X\left(\mu,\nu\right)$ for \ref{KSP}, with $\rmR_{0,T}$-density given by $\rho^T$. Now consider an increasing sequence of rectangular compact sets $\left(K_n\right)_{n\in\N}$ in $\R^{4d}$ whose union gives the whole space. For each $n\in\N$ define the probability measure $q^T_n$ as the measure whose $\rmR_{0,T}$-density is given by
 \[\hat{\rho}^T_n=\frac{\De q^T_n}{\De \rmR_{0,T}}\coloneqq \left(\rho^{T}\wedge n\right)\,\frac{\IND_{K_n}}{C_n}\,,\]
 where $C_n\coloneqq \int_{K_n} ( \rho^{T}\wedge n)\,\De\rmR_{0,T}$ is the normalising constant. Notice that  $C_n\uparrow 1$ by monotone convergence and  $\hat{\rho}^{T}_n\to \rho^{T}$.
 For convenience, we fix in this section some $\bar n\in\N$ such that $C_n\geq 1/2$ for any $n\geq \bar n$.

 \begin{lemma}\label{n:lemma}
  The following properties hold true.
  \begin{itemize}
   \item[(i)] The marginals $\mu^n\coloneqq(\mathrm{proj}_{x_1})_{\#}q^T_n$ and $\nu^n\coloneqq(\mathrm{proj}_{x_2})_{\#}q^T_n$ satisfy \ref{H4}.
   \item[(ii)] $q^T_n\weakto\mu^T$.
   \item[(iii)] $\cH\left(q^T_n|\rmR_{0,T}\right)\to\cH\left(\mu^{T}|\rmR_{0,T}\right)=\cC_T(\mu,\nu)$.
  \end{itemize}
 \end{lemma}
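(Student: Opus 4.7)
The plan is to verify the three claims in order, using the explicit form of $\hat\rho_n^T$ together with the smoothness and local boundedness of the heat kernel $p_T$ recalled in Section~\ref{sec:2}.

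For~(i), since $K_n$ is rectangular in $\R^{4d}$, its projection onto the first $d$ coordinates is a compact subset of $\R^d$, so $\mu^n$ (and similarly $\nu^n$) is automatically compactly supported. To see that $\De\mu^n/\De\frm_X\in L^\oo(\frm_X)$, I would integrate out $(v,y,w)$ against the density $p_T((x,v),(y,w))$ of $\rmR_{0,T}$ with respect to $\frm\otimes\frm$:
\[
\frac{\De\mu^n}{\De\frm_X}(x)=\frac{1}{C_n}\int_{\R^{3d}}(\rho^T\wedge n)(x,v,y,w)\,\IND_{K_n}(x,v,y,w)\,p_T((x,v),(y,w))\,\De\frm_V(v)\De\frm(y,w)\,.
\]
Because $\rho^T\wedge n\leq n$ and $p_T$ is continuous, hence bounded on the compact set $K_n$, this density is bounded by $n\|p_T\|_{L^\oo(K_n)}/C_n<\oo$.

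For~(ii), I would test against an arbitrary $\phi\in C_b(\R^{4d})$, so that $\int\phi\,\De q_n^T=\int\phi\,\hat\rho_n^T\,\De\rmR_{0,T}$. Pointwise $\hat\rho_n^T\to\rho^T$ because each point eventually lies in $K_n$, $\rho^T\wedge n=\rho^T$ for $n$ large, and $C_n\uparrow 1$. Moreover $\hat\rho_n^T\leq\rho^T/C_n\leq 2\rho^T$ for $n\geq\bar n$, so dominated convergence with the $\rmR_{0,T}$-integrable dominant $2\|\phi\|_\oo\,\rho^T$ yields the weak convergence.

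For~(iii), a direct computation exploiting $\int_{K_n}(\rho^T\wedge n)\,\De\rmR_{0,T}=C_n$ gives
\[
\cH(q_n^T|\rmR_{0,T})=\frac{1}{C_n}\int(\rho^T\wedge n)\IND_{K_n}\log(\rho^T\wedge n)\,\De\rmR_{0,T}-\log C_n\,,
\]
and $\log C_n\to 0$. To pass to the limit inside the remaining integral I would split the integrand via positive and negative parts. The negative part is uniformly bounded since $x\log^- x\leq 1/e$ on $[0,\oo)$, while the positive part satisfies $(\rho^T\wedge n)\IND_{K_n}\log^+\!\!\bigl((\rho^T\wedge n)\IND_{K_n}\bigr)\leq\rho^T\log^+\rho^T$, which is $\rmR_{0,T}$-integrable because $\int\rho^T\log\rho^T\,\De\rmR_{0,T}=\cH(\mu^T|\rmR_{0,T})=\cC_T(\mu,\nu)<\oo$ by Proposition~\ref{fg lemma}. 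Dominated convergence applied to both parts yields convergence of the integral to $\int\rho^T\log\rho^T\,\De\rmR_{0,T}=\cC_T(\mu,\nu)$, proving (iii).

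The only delicate step is (iii), because the nonlinearity $x\mapsto x\log x$ is unbounded and not sign-definite; however the positive/negative decomposition, combined with the \emph{a priori} finiteness of $\cC_T(\mu,\nu)$ (already established in the proof of Proposition~\ref{fg lemma}), makes the passage to the limit routine.
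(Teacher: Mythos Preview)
Your proof is correct and follows essentially the same route as the paper: dominated convergence for (ii) and (iii), with the finiteness of $\cC_T(\mu,\nu)$ supplying the integrable dominant for the $x\log x$ term. The only minor difference is in (i): you invoke continuity of $p_T$ to bound $\De\mu^n/\De\frm_X$ by $n\|p_T\|_{L^\infty(K_n)}/C_n$, whereas the paper uses the cleaner measure-theoretic estimate $\mu^n(B)\le (n/C_n)\,\rmR_{0,T}(B\times\R^{3d})=(n/C_n)\,\frm_X(B)$, which avoids any reference to the kernel; both are valid.
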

\begin{proof} We start with i).
Since $q^T_n$ has compact support, so do its marginals $\mu^n,\,\nu^n$. Moreover if $B\subseteq\R^{d}$ is a Borel set, then 
\begin{equation*}\mu^n(B)=q^T_{n}(B\times\R^{3d})\leq \frac{n}{C_n}\,\int_{B\times\R^{3d}}\De\rmR_{0,T}
=\frac{n}{C_n}\,\rmR_{0,T}(B\times\R^{3d})=\frac{n}{C_n}\,\frm_X(B)
\end{equation*}
and therefore $\norm{\De\mu^n/\De\frm_X}_{L^\oo(\frm_X)}\leq \frac{n}{C_n}$. The same reasoning applies also to $\nu^n$.

The weak convergence in (ii) follows from  dominated convergence.

Let us prove point (iii).
Notice that for each $n\geq \bar n$ it holds 
\[\abs{\hat{\rho}^T_{n}\,\log\hat{\rho}^T_{n}}\leq\max\left\{e^{-1},\,(\rho^T C_{\bar n}^{-1})\,\log(\rho^T C_{\bar n}^{-1})\right\}\,,\]
and the above RHS is $\rmR_{0,T}$-integrable since it holds
\begin{equation*}
 \begin{aligned}
  &\int_{\R^{4d}} (\rho^T C_{\bar n}^{-1})\,\log(\rho^T C_{\bar n}^{-1})\,\De\rmR_{0,T}
  =\, \frac{1}{C_{\bar n}}\,\cC_T(\mu,\nu)+\frac{1}{C_{\bar n}}\log\left(\frac{1}{C_{\bar n}}\right)<\oo\,,
 \end{aligned}
\end{equation*}
  which is finite under \ref{H3}. From the Dominated Convergence Theorem we get (iii).
\end{proof}
 
 \begin{prop}\label{full:approx}
  Assume \ref{H1} and \ref{H3} to be true for $\mu,\,\nu\in\cP(\R^d)$. Let $\mu^{T}$ be the unique minimizer in \ref{KSP} with marginals $\mu,\,\nu$. Suppose we are given a sequence $\left(q^T_n\right)_{n\in\N}\subset\cP(\R^{4d})$ such that
  such that
  \begin{itemize}
   \item[(i)] $q^T_n\weakto\mu^T$,
   \item[(ii)] $\cH\left(q^T_n|\rmR_{0,T}\right)\to\cH\left(\mu^T|\rmR_{0,T}\right)$.
  \end{itemize}
Moreover for each $n\in\N$ define the marginals $\mu^n\coloneqq(\mathrm{proj}_{x_1})_{\#}q^T_n$ and $\nu^n\coloneqq(\mathrm{proj}_{x_2})_{\#}q^T_n$.
Then, for each $n\in\N$, there exists a unique minimizer $\mu^T_n\in\Pi_X\left(\mu^n,\nu^n\right)$ in \ref{KSP}  with marginals $\mu^n,\,\nu^n$. Moreover it holds 
\[\mu^T_n\weakto \mu^T\qquad\text{ and }\qquad\cC_T\left(\mu^n,\nu^n\right)\weakto\cC_T\left(\mu,\nu\right)\,.\]
 \end{prop}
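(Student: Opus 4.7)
\emph{Proof plan.} The plan is to sandwich $\cC_T(\mu^n,\nu^n)$ between a direct upper bound obtained by using $q^T_n$ as a competitor and a lower bound obtained by weak convergence plus lower semicontinuity, and then to identify the weak limit of the optimisers $\mu^T_n$ by uniqueness. First I would verify that the marginals $\mu^n,\nu^n$ satisfy \ref{H3}: by the data-processing inequality, since $\frm_X$ is the $x_1$-marginal of $\rmR_{0,T}$, one has $\cH(\mu^n|\frm_X)\leq \cH(q^T_n|\rmR_{0,T})$, and this is finite thanks to assumption (ii); an analogous estimate holds for $\nu^n$. Hence Proposition~\ref{fg lemma} applies and produces the unique minimiser $\mu^T_n\in\Pi_X(\mu^n,\nu^n)$ with $\cC_T(\mu^n,\nu^n)=\cH(\mu^T_n|\rmR_{0,T})<\infty$.

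Since $q^T_n$ itself belongs to $\Pi_X(\mu^n,\nu^n)$, optimality of $\mu^T_n$ yields
\begin{equation*}
\cC_T(\mu^n,\nu^n)\leq \cH(q^T_n|\rmR_{0,T}),
\end{equation*}
so assumption (ii) gives $\limsup_{n}\cC_T(\mu^n,\nu^n)\leq \cC_T(\mu,\nu)$. In particular the entropies $\cH(\mu^T_n|\rmR_{0,T})$ are uniformly bounded, and since sublevel sets of $\cH(\cdot|\rmR_{0,T})$ are weakly relatively compact (see e.g.~\cite[Lemma 1.4.3]{DupuisEllis}), the sequence $(\mu^T_n)_n$ is tight. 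I would then extract a weakly convergent subsequence $\mu^T_{n_k}\weakto \mu^T_\infty$. Since $q^T_n\weakto \mu^T$ and the spatial projections are continuous, $\mu^n\weakto\mu$ and $\nu^n\weakto\nu$, so passing the marginal constraint to the limit yields $\mu^T_\infty\in\Pi_X(\mu,\nu)$.

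By weak lower semicontinuity of the relative entropy,
\begin{equation*}
\cC_T(\mu,\nu)\leq \cH(\mu^T_\infty|\rmR_{0,T})\leq \liminf_{k}\cH(\mu^T_{n_k}|\rmR_{0,T})= \liminf_{k} \cC_T(\mu^{n_k},\nu^{n_k})\leq \cC_T(\mu,\nu),
\end{equation*}
so all inequalities are equalities. Therefore $\mu^T_\infty$ is a minimiser of \ref{KSP} with marginals $\mu,\nu$, and the uniqueness part of Proposition~\ref{fg lemma} forces $\mu^T_\infty=\mu^T$. As the limit is the same along every weakly convergent subsequence, the full sequence satisfies $\mu^T_n\weakto \mu^T$ and $\cC_T(\mu^n,\nu^n)\to \cC_T(\mu,\nu)$. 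The main obstacle is the tightness step, but it reduces to the uniform entropy bound coming from the competitor inequality combined with the standard weak compactness of entropy sublevel sets; the rest is soft (continuity of projections, lower semicontinuity of the entropy) and leverages the uniqueness of the Schr\"odinger bridge obtained in Proposition~\ref{fg lemma}.
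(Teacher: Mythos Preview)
Your proof is correct and follows essentially the same route as the paper: verify \ref{H3} for $\mu^n,\nu^n$ via the data-processing inequality, obtain tightness of $(\mu^T_n)_n$ from a uniform entropy bound, extract a subsequential limit, identify its marginals, and conclude by lower semicontinuity and uniqueness of the optimiser. The one minor difference is that you get the uniform bound on $\cH(\mu^T_n|\rmR_{0,T})$ directly from the competitor inequality $\cC_T(\mu^n,\nu^n)\leq \cH(q^T_n|\rmR_{0,T})$, whereas the paper detours through the a~priori estimate \eqref{boundscemocosto}; your shortcut is cleaner and avoids invoking the heat-kernel lower bound at this stage.
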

 \begin{proof}
  Firstly, \ref{H3} and the convergence of the entropies in the assumptions imply that $\cH(\mu^n|\frm_X)\,,\cH(\nu^n|\frm_X)\leq \cH\left(q^T_n|\rmR_{0,T}\right) <C$ for some positive constant $C$, uniformly in $n\in\N$. Hence \ref{H3} holds also for $\mu^n,\,\nu^n$. This gives the existence and uniqueness of the minimizer in \ref{KSP} with marginals $\mu^n$ and $\nu^n$  for each $n\in\N$.
  
  Then, from \eqref{boundscemocosto} we deduce
\begin{equation*}\begin{aligned}
 \sup_{n\in\N}\cH\left(\mu^T_n|\rmR_{0,T}\right)=\sup_{n\in\N}\cC_T\left(\mu^n,\nu^n\right)\lesssim 1+\sup_{n\in\N}\bigl[\cH(\mu^n|\frm_X)+\cH(\nu^n|\frm_X)\bigr]\lesssim1+2C\,.               
\end{aligned}
\end{equation*}
  Since the relative entropy $\cH(\cdot|\rmR_{0,T})$ has compact level sets \cite[Lemma 1.4.3]{DupuisEllis}, there is a subsequence $\left(\mu^{T}_{n_k}\right)_{k\in\N}$ and a probability measure $\tilde\mu^{T}\in\cP(\R^{4d})$ such that $\mu^{T}_{n_k}\weakto\tilde\mu^{T}$ weakly. Moreover, from the lower semicontinuity of $\cH(\cdot|\rmR_{0,T})$ and the optimality of $\mu_{n_k}^T$ we get
 \begin{equation}\label{full:appoggio}\cH\left(\tilde\mu^{T}|\rmR_{0,T}\right)\leq \liminf_{k\to\oo}\cH\left(\mu^{T}_{n_k}|\rmR_{0,T}\right)\leq \liminf_{k\to\oo}\cH\left(q^{T}_{n_k}|\rmR_{0,T}\right)=\cH(\mu^T|\rmR_{0,T})\,.\end{equation}  
  
  Now, we claim that $\tilde \mu^{T}\in\Pi_X(\mu,\nu)$. Indeed we have for any $i=1,2$
 \begin{equation*}(\mathrm{proj}_{x_i})_{\#}\tilde\mu^{T}=\lim_{k\to\oo}(\mathrm{proj}_{x_i})_{\#}\mu^{T}_{n_k}=\lim_{k\to\oo}(\mathrm{proj}_{x_i})_{\#} q^{T}_{n_k}=(\mathrm{proj}_{x_i})_{\#}\mu^T=\begin{cases}
                                                                                                                                                                                                                                 \mu\quad i=1\,\\
                                                                                                                                                                                                                                 \nu\quad i=2\,,                                                                                                                                                                                                                   \end{cases}
\end{equation*}
where the second equality holds because $\mu^{T}_{n_k}$ and $q^{T}_{n_k}$ share the same marginals, while the third follows from our hypotheses. Therefore, from the bound \eqref{full:appoggio} and the optimality of $\mu^T$ as unique minimizer in $\Pi_X(\mu,\nu)$ for \ref{KSP}, it follows $\tilde\mu^{T}=\mu^T$.

Hence, as $k\to\infty$, it holds $\mu^{T}_{n_k}\weakto \mu^T$ and
\[\exists\,\lim_{k\to\oo}\cC_T\left(\mu^{n_k},\nu^{n_k}\right)=\lim_{k\to\oo} \cH\left(\mu^{T}_{n_k}|\rmR_{0,T}\right)=\cH(\mu^T|\rmR_{0,T})=\cC_T(\mu,\nu)\,.\]
Since in both the limits above the limit objects do not depend on the subsequence and since the weak convergence is metrizable, we get  the desired thesis.
 \end{proof}

 \begin{corollary}\label{full:cor:approx}
  Under the same setting of the previous proposition, if $\mu^{n,T}\in\cP(\Omega)$ denotes the minimizer in \ref{KSPd}, then for each $t\in[0,\,T]$
  \[\mu^{n,T}\weakto\mu^T\qquad\text{ and }\qquad\mu^{n,T}_t\weakto\mu^T_t\,.\]
 \end{corollary}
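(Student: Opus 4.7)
The plan is to lift the static convergence $\mu^T_n \weakto \mu^T$ in $\cP(\R^{4d})$ from Proposition \ref{full:approx} to the convergence of path measures via a compactness-plus-uniqueness argument. First, since $\mu^{n,T}$ is the minimizer of \ref{KSPd} with marginals $\mu^n,\nu^n$, its endpoint law is exactly the static optimizer of Proposition \ref{full:approx}, and by the equivalence between dynamic and static formulations one has $\cH(\mu^{n,T}|\rmR)=\cC_T(\mu^n,\nu^n)$. By Proposition \ref{full:approx} this converges to $\cC_T(\mu,\nu)<\oo$, so the sequence of relative entropies is uniformly bounded.

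Next, I would invoke the standard fact that sublevel sets of $\cH(\cdot|\rmR)$ are weakly compact in $\cP(\Omega)$ (see e.g.\ \cite[Lemma 1.4.3]{DupuisEllis}), hence $(\mu^{n,T})_{n\in\N}$ is weakly relatively compact. Extract an arbitrary subsequence $\mu^{n_k,T}\weakto\tilde\rmP$ in $\cP(\Omega)$. By the continuous mapping theorem applied to the continuous projection $\omega\mapsto((X_0,V_0),(X_T,V_T))(\omega)$, one has $((X_0,V_0),(X_T,V_T))_\#\tilde\rmP$ equal to the weak limit of the endpoint laws $\mu^T_{n_k}$, which by Proposition \ref{full:approx} is $\mu^T$. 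In particular $(X_0)_\#\tilde\rmP=\mu$ and $(X_T)_\#\tilde\rmP=\nu$, so $\tilde\rmP$ is admissible for \ref{KSPd}. Lower semicontinuity of relative entropy together with the convergence of $\cH(\mu^{n_k,T}|\rmR)$ yields
\[
\cH(\tilde\rmP|\rmR)\,\leq\,\liminf_{k\to\oo}\cH(\mu^{n_k,T}|\rmR)\,=\,\cC_T(\mu,\nu),
\]
while admissibility gives the reverse inequality. Thus $\tilde\rmP$ achieves the minimum of \ref{KSPd}, and by uniqueness (Proposition \ref{fg lemma}) $\tilde\rmP$ coincides with the path-measure minimizer $\mu^T$ referred to in the statement. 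Since every subsequence admits a sub-subsequence converging to the same limit, the full sequence satisfies $\mu^{n,T}\weakto\mu^T$ in $\cP(\Omega)$.

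The marginal-flow convergence $\mu^{n,T}_t\weakto\mu^T_t$ for each $t\in[0,T]$ is then an immediate consequence of the continuous mapping theorem applied to the evaluation map $\omega\mapsto(X_t,V_t)(\omega)$, which is continuous from $\Omega=C([0,T];\R^{2d})$ to $\R^{2d}$. The only non-routine ingredient is the tightness of $(\mu^{n,T})$ in $\cP(\Omega)$, but this follows directly from the uniform entropy bound and the classical tightness of entropy sublevel sets, so the argument is essentially structural and does not present any genuine obstacle beyond the work already carried out in Proposition \ref{full:approx}.
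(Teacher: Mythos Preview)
Your proof is correct, but it takes a different route from the paper's. The paper argues directly via the bridge decomposition: since the dynamic minimizer is obtained by mixing the $\rmR$-bridges $\rmR^{x,v,y,w}$ against the static optimizer, one has for every $\phi\in C_b(\Omega)$
\[
\int_\Omega \phi\,\De\mu^{n,T}=\int_{\R^{4d}} \Big(\int_\Omega \phi\,\De\rmR^{x,v,y,w}\Big)\De \mu^T_{n},
\]
and the map $(x,v,y,w)\mapsto\int_\Omega\phi\,\De\rmR^{x,v,y,w}$ is bounded and continuous (weak continuity of bridges in their endpoints, \cite[Corollary 1]{markovbridges}). The static convergence $\mu^T_n\weakto\mu^T$ then gives the result in one line.

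Your compactness-plus-uniqueness argument is sound and has the advantage of not relying on the continuity of bridges; it uses only the entropy bound, weak compactness of entropy sublevel sets on path space, lower semicontinuity, and the uniqueness from Proposition~\ref{fg lemma}. The paper's approach, by contrast, is more explicit and slightly shorter, exploiting the specific product structure of Schr\"odinger bridges rather than treating the problem as an abstract variational limit.
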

\begin{proof}
  From the relation between \ref{KSPd} and \ref{KSP}, for any $\phi\in C_b(\Omega)$ we have
 \begin{equation*}
  \int_\Omega \phi\,\De\mu^{n,T}=\int_{\R^{4d}} \left(\int_\Omega \phi\,\De\rmR^{x,v,y,w}\right)\De \mu^T_{n}\to\int_{\R^{4d}} \left(\int_\Omega \phi\,\De\rmR^{x,v,y,w}\right)\De \mu^T=\int_\Omega \phi\,\De \mu^T\,,
 \end{equation*}
 where $\rmR^{x,v,y,w}$ denotes the bridge of the reference measure.
Let us just justify the middle step. Since $\phi$ is bounded, so does $\int_\Omega \phi\,\De\rmR^{x,v,y,w}$. Moreover since the bridge $\rmR^{x,v,y,w}$ is weakly continuous with respect to its extremes \cite[Corollary 1]{markovbridges}, from the continuity of $\phi$, it follows the continuity of the function $(x,v,y,w)\mapsto \int_\Omega \phi\,\De\rmR^{x,v,y,w}$.
Hence the above function is bounded and continuous on $\R^{4d}$ and from the weak convergence $\mu^{T}_n\weakto \mu^{T}$ it follows $\mu^{n,T}\weakto \mu^T$. The other limit  follows by taking the time marginals of $\mu^{n,T}$.
\end{proof}

\subsection{Approximating the marginals}\label{subsec: approx marg}
In this section we are going to perform the approximating arguments directly on the fixed marginals. This will not lead to the convergence of the respective kinetic entropic costs, nevertheless it will be useful in proving the bounds where the previous approximating argument fails. The idea is similar to the one performed previously: consider an increasing sequence of compact sets $\left(K_n\right)_{n\in\N}$ in $\R^{d}$ whose union gives the whole space and for any $\mathfrak{q}\in\cP(\R^d)$, satisfying \ref{H3} and $n\in\N$ large enough so that $\frq(K_n)>0$, define the probability measure $\mathfrak{q}_n^M$ as the measure whose $\frm_X$-density is given by
 \[\frac{\De\mathfrak{q}^M_n}{\De \frm_X}\coloneqq \left(\frac{\De\mathfrak{q}}{\De \frm_X}\wedge n\right)\,\frac{\IND_{K_n}}{C^\mathfrak{q}_n}\,,\]
 where $C_n^{\mathfrak{q}}\coloneqq \int_{K_n} (\frac{\De\mathfrak{q}}{\De \frm_X}\wedge n)\,\De\frm_X \geq 0$ is the normalising constant. Note that monotone convergence yields $C_n^\mathfrak{q}\uparrow 1$. Then it follows that $\mathfrak{q}^M_n$ satisfies \ref{H4}, $\mathfrak{q}^M_n\weakto \mathfrak{q}$ and by mimicking the argument performed in the Lemma \ref{n:lemma} it follows that
 \begin{equation}\label{conv:approx:marginals}
 \cH(\mathfrak{q}_n^M|\frm_X) \overset{n\to\oo}{\longrightarrow} \cH(\mathfrak{q}|\frm_X)\,.
 \end{equation}
 
 \begin{lemma}\label{M:n:lemma}
 Fix $\mu,\nu\in\cP(\R^d)$ satisfying \ref{H3}. Then, up to restricting ourselves to a subsequence,  it holds
 \[\cC_T(\mu,\nu) \leq \liminf_{n\to\oo}\cC_T(\mu^M_n,\nu^M_n)\,.\]
 \end{lemma}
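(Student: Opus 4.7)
The plan is a standard lower-semicontinuity/compactness argument in the spirit of $\Gamma$-liminf inequalities. We may assume that $\liminf_n \cC_T(\mu^M_n, \nu^M_n) < \infty$, otherwise there is nothing to prove, and pass to a subsequence (not relabelled) along which the $\liminf$ is attained as a finite limit. For each $n$, let $\pi^n \in \Pi_X(\mu^M_n, \nu^M_n)$ be the unique optimizer for $\cC_T(\mu^M_n, \nu^M_n)$, which exists by Proposition~\ref{fg lemma} since $\mu^M_n, \nu^M_n$ satisfy \ref{H4} (hence \ref{H3}).

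First I would show that $(\pi^n)_{n\in\N}$ is relatively compact for the weak topology. This follows from the compactness of sublevel sets of $\cH(\cdot\mid \rmR_{0,T})$ \cite[Lemma 1.4.3]{DupuisEllis} and the uniform entropy bound
\[
\cH(\pi^n\mid \rmR_{0,T}) = \cC_T(\mu^M_n,\nu^M_n),
\]
which is bounded along the extracted subsequence by assumption. Alternatively, one can invoke~\eqref{boundscemocosto} together with the convergence $\cH(\mu^M_n|\frm_X)\to\cH(\mu|\frm_X)$ and $\cH(\nu^M_n|\frm_X)\to\cH(\nu|\frm_X)$ from~\eqref{conv:approx:marginals} to control the sequence a priori. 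Up to a further subsequence, $\pi^n \weakto \tilde\pi$ weakly for some $\tilde\pi\in\cP(\R^{4d})$.

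Next I would identify the marginals of $\tilde\pi$. Since $(\mathrm{proj}_{x_1})_{\#}\pi^n = \mu^M_n \weakto \mu$ and $(\mathrm{proj}_{x_2})_{\#}\pi^n = \nu^M_n \weakto \nu$, continuity of the projections under weak convergence yields $(\mathrm{proj}_{x_1})_{\#}\tilde\pi = \mu$ and $(\mathrm{proj}_{x_2})_{\#}\tilde\pi = \nu$, so $\tilde\pi \in \Pi_X(\mu,\nu)$. Finally, by the lower semicontinuity of the relative entropy with respect to weak convergence and the definition of $\cC_T(\mu,\nu)$ as an infimum, we conclude
\[
\cC_T(\mu,\nu) \leq \cH(\tilde\pi\mid \rmR_{0,T}) \leq \liminf_{n\to\infty}\cH(\pi^n\mid \rmR_{0,T}) = \liminf_{n\to\infty}\cC_T(\mu^M_n,\nu^M_n),
\]
which is the desired inequality. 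The only delicate point is the tightness/relative compactness of $(\pi^n)$, but this is handled for free by the compactness of entropy sublevel sets, so no genuine obstacle appears; the remainder of the argument is routine weak-convergence bookkeeping.
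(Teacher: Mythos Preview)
Your proof is correct and follows essentially the same approach as the paper: take optimizers $\pi^n$, use compactness of entropy sublevel sets (via \eqref{boundscemocosto} and the convergence of marginal entropies) to extract a weak limit, identify its $x$-marginals as $\mu,\nu$, and conclude by lower semicontinuity of the relative entropy. The only cosmetic difference is that the paper directly invokes \eqref{boundscemocosto} for the a priori bound, whereas you also mention the alternative of first reducing to a subsequence with finite $\liminf$.
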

 \begin{proof}
Let $\mu^T_{M,n}$ denotes the optimizer for $\cC_T(\mu^M_n,\nu_n^M)$. Then we have 
\[\cH(\mu^{T}_{M,n}|\rmR_{0,T})=\cC_T(\mu^M_n,\nu_n^M)\overset{\eqref{boundscemocosto}}{\lesssim}1 + \cH(\mu^M_n|\frm_X)+\cH(\nu^M_n|\frm_X)\,\overset{n\to\oo}{\longrightarrow}\,1+ \cH(\mu|\frm_X)+\cH(\nu|\frm_X)\,,\]
which is finite because of \ref{H3}. Since $\cH(\cdot|\rmR_{0,T})$ has compact level set, we know that there exists $\mu^\star\in\cP(\R^{4d})$ such that $\mu_{M,n}^{T}\weakto \mu^\star$, up to considering a subsequence. We claim that $\mu^\star\in\Pi_X(\mu,\nu)$. Indeed we have $ (X_0)_{\#}\mu^T_{M,n}\weakto(X_0)_{\#}\mu^\star$ but $(X_T)_{\#}\mu^T_{M,n}=\mu^M_n\weakto \mu$ and hence $(X_0)_{\#}\mu^\star=\mu$. Similarly it holds $(X_T)_{\#}\mu^\star=\nu$. Therefore we have $\cC_T(\mu,\nu) \leq \cH(\mu^\star|\rmR_{0,T})$ and from the lower semicontinuity of $\cH(\cdot|\rmR_{0,T})$  we deduce our thesis.
 \end{proof}

\appendix
\section{}
\footnotesize

\subsection*{Proof of Lemma~\ref{lemmaappendicebound}}
 
Let $T_0>0$ be fixed. From Jensen's inequality we know that
\begin{equation}\label{appo1appendo}\begin{aligned}\,&\log p_T((x,v),\, (y,w))=\log\int_{\R^{2d}} p_{T-T_0/2}\left((x,v),\,(z,u)\right)\,p_{T_0/2}\left((z,u),\,(y,w)\right)\De \mathfrak{m}(z,u)\\
&\geq\int_{\R^{2d}} \log p_{T-T_0/2}\left((x,v),\,(z,u)\right)\De \mathfrak{m}(z,u)+\int_{\R^{2d}} \log p_{T_0/2}\left((z,u),\,(y,w)\right)\De \mathfrak{m}(z,u)\,.
\end{aligned}\end{equation}
By Theorem 1.1 in \cite{DelarueMenozzi}, there exists  $C\geq 1$ depending on $T_0$ such that 
\begin{equation*}\begin{aligned}p_{T_0/2}\left((z,u),\,(y,w)\right)\gtrsim  p_{T_0/2}((z,u),(y,w))\frm(y,w)
\geq C^{-1} \,e^{-C|\theta_{T_0/2}(z,u)-(y,w)^T|^2}\,\end{aligned}\end{equation*}
where $\theta_t(x_0,v_0)=\left(\theta_t^x,\,\theta^v_t\right)^T$ denotes the solution of the denoised Langevin ODE system
\begin{equation*}\label{nonoiselane}
 \begin{cases}
  \frac{\De}{\De t} \theta_t^x=\theta^v_t\\
  \frac{\De}{\De t} \theta_t^v=-\theta_t^v-\nabla U(\theta_t^x)
 \end{cases}\qquad \text{with}\quad\theta_0=(x_0,\,v_0)^T\,.
\end{equation*}
Since under \ref{H1} there exists a large enough positive $r\in\R$ such that \eqref{eq:contR} holds for $(\Id,-r)$, from \cite[Theorem 1]{monmarche2020almost} (with $\Sigma=0$) it follows
\begin{equation}\label{euro2020}\abs{\theta_t(y,w)-\theta_t(0,0)}\leq  e^{r\,t}\abs{(y,w)^T}\,,\quad\forall\,(y,w)^T\in\R^{2d},\,\,\forall t\geq0\,.\end{equation}
Therefore, up to changing the constants $C$ from line to line, we have
\begin{equation}\label{primoboundlemma}
 \begin{aligned}\int_{\R^{2d}} \log p_{T_0/2}&\left((z,u),\,(y,w)\right)\De \mathfrak{m}(z,u)\geq \log C^{-1} - C \int_{\R^{2d}} \abs{\theta_{T_0/2}(z,u)-(y,w)^T}^2 \De \mathfrak{m}(z,u)\\
  &\geq -C\Bigl(1+\abs{y}^2+\abs{w}^2+\int_{\R^{2d}} \abs{\theta_{T_0/2}(z,u)}^2\De \mathfrak{m}(z,u)\Bigr)\geq -C\Bigl(1+\abs{y}^2+\abs{w}^2\Bigr)\,,
 \end{aligned}
\end{equation}
where the last step holds since $\mathfrak{m}\in\cP_2(\R^{2d})$, and therefore
\begin{equation*}\label{appolemmaappendice}\begin{aligned}
\int_{\R^{2d}} \abs{\theta_{T_0/2}(z,u)}^2\De \mathfrak{m}(z,u)&\leq 2\abs{\theta_{T_0/2}(0,0)}^2+2 \int_{\R^{2d}} \abs{\theta_{T_0/2}(z,u)-\theta_{T_0/2}(0,0)}^2\De \mathfrak{m}(z,u)\\
&\overset{\eqref{euro2020}}{\leq} 2\abs{\theta_{T_0/2}(0,0)}^2+2  \,e^{2r} \int_{\R^{2d}} \left(\abs{z}^2+\abs{u}^2\right)\De \mathfrak{m}(z,u)\leq C\,.
\end{aligned}
\end{equation*}

Now, notice that we can rewrite the first integral of the RHS in \eqref{appo1appendo} as
\begin{equation*}\begin{aligned}\int_{\R^{2d}} \log p_{T-T_0/2}&\left((x,v),\,(z,u)\right)\De \mathfrak{m}(z,u)\\=&\int_{\R^{2d}} \log\biggl[\int_{\R^{2d}} p_{T_0/2}\left((x,v),\,(q,r)\right)\,p_{T-T_0}\left((q,r),\,(z,u)\right)\De\mathfrak{m}(q,r)\biggr]\De \mathfrak{m}(z,u).\end{aligned}\end{equation*}
Because of \eqref{physrever}, we know that $p_{T-T_0/2}\left((q,r),\,(z,u)\right)\De\mathfrak{m}(q,r)$ is a probability measure over $\R^{2d}$ and therefore by Jensen's inequality and Fubini the above displacement can be lower bounded by
\begin{equation*}\begin{aligned}
    &\int_{\R^{2d}} \int_{\R^{2d}} \log \left [p_{T_0/2}\left((x,v),\,(q,r)\right)\right] \,p_{T-T_0}\left((q,r),\,(z,u)\right)\De\mathfrak{m}(q,r)  \De \mathfrak{m}(z,u)   \\
     &=\int_{\R^{2d}}\log p_{T_0/2}\left((x,v),\,(q,r)\right)\De \mathfrak{m}(q,r) \overset{\eqref{physrever}}{=}\int_{\R^{2d}}\log p_{T_0/2}\left((q,-r),\,(x,-v)\right)\De \mathfrak{m}(q,r)\\&=\int_{\R^{2d}}\log p_{T_0/2}\left((q,r),\,(x,-v)\right)\De \mathfrak{m}(q,r)\overset{\eqref{primoboundlemma}}{\geq}   -C\Bigl(1+\abs{x}^2+\abs{v}^2\Bigr)\,.
     \end{aligned}
\end{equation*}
Putting the above lower bound and \eqref{primoboundlemma} into inequality \eqref{appo1appendo}, we get
\[\log p_T\left((x,v),\,(y,w)\right)\geq -c_{T_0}\Bigl(1+\abs{x}^2+\abs{v}^2+\abs{y}^2+\abs{w}^2\Bigr)\,.\]

\bibliographystyle{amsplain}
\bibliography{literature}

\end{document}